\theoremstyle{definition}
\newtheorem{definition}{Definition}[section]
\newtheorem{generalization}{Theorem}
\newtheorem*{counterexample}{Counterexample}
\theoremstyle{plain}
\newtheorem{theorem}[definition]{Theorem}
\newtheorem{proposition}[definition]{Proposition}
\newtheorem{lemma}[definition]{Lemma}
\newtheorem{corollary}[definition]{Corollary}
\theoremstyle{remark}
\newtheorem{remark}[definition]{Remark}
\newtheorem*{claim}{Claim}
\newtheorem*{acknowledgments}{Acknowledgments}
\newtheorem*{notation}{Notation}
\DeclareMathOperator{\Exp}{Exp}
\DeclareMathOperator{\Spec}{Spec}
\DeclareMathOperator{\GL}{GL}
\DeclareMathOperator{\trdeg}{trdeg}
\DeclareMathOperator{\Strat}{Strat}
\DeclareMathOperator{\Aut}{Aut}
\DeclareMathOperator{\End}{End}
\DeclareMathOperator{\Vt}{Vecf}
\DeclareMathOperator{\coh}{H}
\newcommand{\aut}{\underline{\Aut}^\otimes}
\newcommand{\E}{\mathbb{E}}
\newcommand{\F}{\mathbb{F}}
\newcommand{\Z}{\mathbb{Z}}
\newcommand{\I}{\mathbb{I}}
\newcommand{\Ha}{\mathbb{H}}
\newcommand{\ebar}{{\bar\eta}}
\newcommand{\Oh}{\mathscr{O}}
\newcommand{\M}{\mathcal{M}}
\newcommand{\N}{\mathbb{N}}
\newcommand{\D}{\mathscr{D}}
\newcommand{\de}{\partial}
\newcommand{\A}{\mathbb{A}}
\newcommand{\U}{\mathcal{U}}
\newcommand{\bin}[2]{\genfrac{(}{)}{0pt}{}{#1}{#2}}
\newcommand{\onto}{\twoheadrightarrow}
\def\blfootnote{\gdef\@thefnmark{}\@footnotetext}
\begin{document}
\title[The variation of the monodromy group in families]{The variation of the monodromy group in families of stratified bundles \\ in positive characteristic}
\author{Giulia Battiston}
\date{\today}
\address{}

\email{gbattiston@mi.fu-berlin.de}

\begin{abstract}In this article we study smooth families of stratified bundles in positive characteristic and the variation of their monodromy group. Our aim is, in particular, to strengthen the weak form of the positive equicharacteristic $p$-curvature conjecture stated and proved by Esnault and Langer in \cite{EL:pcur}. The main result is that if the ground field is uncountable then the strong form holds. In the case where the ground field is countable we provide positive and negative answers to possible generalizations.
\end{abstract}
\maketitle
\blfootnote{\subjclass{2010 MSC Primary: 14D05, 14H30, 13N10}.\\ This work was supported by SFB/TR 45 ``Periods, moduli spaces and arithmetic of algebraic varieties'' and GRK 1800 ``Moduli and Automorphic Forms'' of the DFG}
\section{Introduction}
Let $(E,\nabla)$ be a vector bundle endowed with a flat connection on a smooth complex variety $X$. Then, there exists a smooth scheme $S$ over (some open subscheme of) $\Spec\mathbb{Z}$ such that $(E,\nabla)=(E_S,\nabla_S)\otimes_S\mathbb{C}$ and $X=X_S\otimes_S\mathbb{C}$ with $X_S$ smooth over $S$ and $(E_S,\nabla_S)$ flat connection on $X_S$ relative to $S$. The $p$-curvature conjecture of Grothendieck and Katz predicts (see \cite[Conj.~3.3.3]{An:pcur}) that if for all closed points $s$ of a dense open subscheme $\tilde{S}\subset S$ we have that $E_S\times_Ss$ is spanned by its horizontal sections, then  $(E,\nabla)$ must be trivialized by an \'etale finite cover of $X$.

An analogue problem can be studied in equicharacteristic zero, and in fact it reduces the $p$-curvature conjecture to the number field case. Y.~Andr\'e in \cite[Prop.~7.1.1]{An:pcur} and E.~Hrushovski in \cite[116]{Hru} stated and proved the following equicharacteristic zero version of the $p$-curvature conjecture:  let $X\to S$ be a smooth morphism of varieties over a field $K$ of characteristic zero; let $(E,\nabla)$ be a flat connection on $X$ relative to $S$ such that, for every  closed point $s$ in a dense open $\tilde{S}\subset S$, the flat connection $(E,\nabla)\times_Ss$ is trivialized by a finite \'etale cover. Then, there exists a finite \'etale cover of the generic geometric fiber over $\ebar$ that trivializes $(E,\nabla)\times_S\ebar$, where $\ebar$ is a geometric generic point of $S$.

The theorem of Andr\'e and Hrushovsky translates naturally in positive characteristic, providing a positive equicharacteristic analogue to the $p$-curvature conjecture. Here, the role of relative flat connections is played by relative stratified bundles. A \emph{stratified bundle on $X$ relative to $S$} is a vector bundle of finite rank with an action of the ring of differential operators $\D_{X/S}$ on $X$ relative to $S$. 

In \cite[Cor.~4.3, Rmk.~5.4.1]{EL:pcur} H.~Esnault and A.~Langer  proved, using an example of Y.~Laszlo (see \cite{Ls}), that there exists $X\to S$ a projective smooth morphism  of varieties over $\bar{\F}_2$ and a stratified bundle over $X$ relative to $S$ which is trivialized by a finite \'etale cover on every closed fiber but not on the geometric generic one. In particular, this provides a counterexample to the positive equicharacteristic version of Andr\'e and Hrushovsky's theorem.

Nevertheless, they were able to prove what they call a weak form of the theorem (see \cite[Thm.~7.2]{EL:pcur}): let $X\to S$ be a \emph{projective} smooth morphism and let $\E=(E,\nabla)$ be a stratified bundle on $X$ relative to $S$ such that, for all closed points of a dense open $\tilde{S}\subset S$, the stratified bundle $\E\times_Ss$ is trivialized by a finite \'etale cover \emph{of order prime to $p$}. Then, if $K\neq\bar{\F}_p$, there exists a finite \'etale cover \emph{of order prime to $p$} of the generic geometric fiber that trivializes $\E\times_S\ebar$. In case $K=\bar{\F}_p$, there exists a finite \'etale cover \emph{of order prime to $p$} such that the pullback of $\E\times_S\ebar$ is a direct sum of stratified line bundles.

\

In this article we answer two natural questions aiming at generalizing this last theorem: the first one is whether we can relax the assumption of coprimality to $p$ of the order of the trivializing covers of the $\E\times_Ss$, while keeping the assumption that $X$ is proper over $S$. The second one is if we can drop this last assumption as well. Our main result, in particular, is that if $K$ is uncountable then the positive equicharacteristic version of Andr\'e and Hrushovsky's theorem holds.

\

Let assume from now on that $K$ has positive characteristic $p$. Bearing in mind the counterexample of Esnault and Langer (\cite[Cor.~4.3]{EL:pcur}) we cannot hope in general to completely eliminate the assumption of coprimality to $p$ of the order of the trivializing covers of the $\E\times_Ss$. Still, we can answer positively the first question proving that it suffices to impose to the power of $p$ dividing the order of such trivializing covers to be bounded:

\begin{generalization}[See Theorem~\ref{eldiv}]\label{uno} Let $K$ be an algebraically closed field, $X\to S$  a smooth proper morphism of $K$-varieties and $\E=(E,\nabla)$ a stratified bundle on $X$ relative to $S$. Assume that for every closed point $s$ in a dense open $\tilde{S}\subset S$ the stratified bundle $\E_s=\E\times_Ss$ is trivialized by a finite \'etale cover \emph{whose order is not divisible by $p^N$ for some fixed $N\geq 0$}. Then, if $K\neq\bar{\F}_p$, there exists a finite \'etale cover of the generic geometric fiber that trivializes $\E_\ebar=\E\times_S\ebar$. In case $K=\bar{\F}_p$, there exists a finite \'etale cover such that the pullback of $\E_\ebar$ is the direct sum of stratified line bundles.
\end{generalization}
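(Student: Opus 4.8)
The natural strategy is an induction on $N$ whose base case $N=1$ is exactly the weak form of Esnault and Langer: there the hypothesis says that each $\E_s$, $s$ a closed point of $\tilde S$, is trivialized by a finite étale cover of order prime to $p$, so \cite[Thm.~7.2]{EL:pcur} applies and yields both assertions, including the dichotomy between $K\neq\bar{\F}_p$ and $K=\bar{\F}_p$. For the inductive step I would set up the fibrewise monodromy groups: the stratified bundle on a geometric fibre generates a Tannakian subcategory with group a closed subgroup scheme $G\subset\GL_r$, $r=\rk\E$, finite and étale precisely when the bundle is trivialized by a finite étale cover; for closed $s\in\tilde S$ the group $G_s$ is then finite étale with $p$-Sylow $P_s$ of order at most $p^{N-1}$.

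The aim of the step is to produce a finite étale $S$-cover $h\colon X'\to X$, with $X'\to S$ again smooth and proper, such that over a dense open of $S$ the special fibres of $h^\ast\E$ are trivialized by finite étale covers whose $p$-part is at most $p^{N-2}$. Given such an $h$, the inductive hypothesis applied to $h^\ast\E$ on $X'\to S$ produces a finite étale cover $g\colon Z\to X'_{\ebar}$ trivializing $(h^\ast\E)_{\ebar}$ — respectively, when $K=\bar{\F}_p$, one after which $(h^\ast\E)_{\ebar}$ is a direct sum of stratified line bundles — and then the composite $h_{\ebar}\circ g\colon Z\to X_{\ebar}$, being finite étale, trivializes $\E_{\ebar}$ (respectively exhibits it as a sum of stratified line bundles after pullback). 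Fibrewise such a reduction always exists: pulling $\E_s$ back along the subcover of its trivializing $G_s$-cover corresponding to an index-$p$ subgroup $P_s^-\subset P_s$ replaces the monodromy by $P_s^-$, whose $p$-Sylow has order $|P_s|/p\leq p^{N-2}$.

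The real difficulty — and the step where both hypotheses are indispensable — is that this fibrewise cover has degree $[G_s:P_s]\cdot p$, and since the prime-to-$p$ part of $|G_s|$ is \emph{not} bounded over $\tilde S$ there is no evident relative cover of bounded degree restricting to it. Making a \emph{single} relative $h$ work on a dense open therefore requires controlling the variation of the whole monodromy group $s\mapsto G_s$ in the family, and this is where properness enters: it forces the underlying bundles of $\E$ and of all its Frobenius descents to be numerically flat, hence to lie in a bounded family, which in turn makes the formation of $G_s$, of its $p$-Sylow, and of the associated covers constructible over $S$, so that after shrinking $\tilde S$ they spread out to a genuine relative cover. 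The uniform bound $p^{N-1}$ is what keeps the wild part of the generic monodromy from escaping to an infinite group scheme in this limit — precisely the phenomenon that occurs, in the absence of such a bound, in the counterexample of Laszlo and Esnault–Langer \cite[Cor.~4.3]{EL:pcur}. I expect this constructibility-and-spreading-out of the monodromy group and its wild covers in the proper family to be the technical heart of the argument, whereas the descent through $h$ and the $K=\bar{\F}_p$ dichotomy are formal, the latter being inherited directly from the base case.
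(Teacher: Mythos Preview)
Your inductive scheme has a genuine gap at the step you yourself flag as the ``real difficulty'', and the appeal to constructibility does not close it. The fibrewise cover you propose --- corresponding to an index-$p$ subgroup $P_s^{-}$ of a $p$-Sylow $P_s\subset G_s$ --- has degree $[G_s:P_s^{-}]=[G_s:P_s]\cdot p$, and the prime-to-$p$ index $[G_s:P_s]$ is \emph{not} bounded over $\tilde S$. The mechanism by which a family of fibrewise covers is assembled into a single relative cover (the paper's Lemma~\ref{dominate}) rests on the fact that $\pi_1^{\text{\'et}}(X_{\bar\eta})$ has only finitely many open subgroups of index below a fixed bound; with unbounded degree this fails outright. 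Constructibility of $s\mapsto G_s$ would at best stratify $\tilde S$ into pieces on each of which the degree is constant, but there is no reason for finitely many strata to suffice, and even then the covers are not canonical (neither $P_s$ nor $P_s^{-}$ is), so they need not glue. Properness and numerical flatness are used in \cite{EL:pcur} for a different purpose and do not yield what you need here.

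The paper's proof avoids this obstacle by a different reduction that keeps all degrees bounded. Since each $G_s$ sits in $\GL_r(K)$ with $p$-Sylow of order at most $p^{N-1}$, the Brauer--Feit generalization of Jordan's theorem furnishes a normal abelian subgroup $A_s\lhd G_s$ of index $<M=M(r,N)$, uniformly in $s$. The corresponding covers $Z_s\to X_s$ have degree $<M$, so Lemma~\ref{dominate} produces a single finite \'etale $Z'\to X$ dominating them all; after pullback the fibrewise monodromy is abelian. Now an abelian group splits as $p$-part times prime-to-$p$ part, and the $p$-part has order $\le p^{N-1}$ --- bounded again --- so a second application of Lemma~\ref{dominate} kills it. One is then exactly in the prime-to-$p$ situation of \cite[Thm.~7.2]{EL:pcur}. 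Thus the proof is a two-step reduction (abelianize, then kill the bounded $p$-part), not an induction on $N$; the key missing ingredient in your proposal is the Jordan/Brauer--Feit step that converts the bound on the $p$-part into a bound on the degree of the first cover.
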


The second question has a more involved answer. The assumption on $X$ being proper over $S$ is more delicate to eliminate; the order of the trivializing covers does not play any role while the cardinality of $K$ becomes the main obstruction:

\begin{counterexample}[See Proposition~\ref{ex}] If $K$ is a countable field, then there exists a stratified bundle on $\A_K^2$ relative to $\A_K^1$ which is trivial on every closed fiber but is not trivialized by any finite \'etale cover on the generic geometric fiber.
\end{counterexample}

On the other hand the main result of this article is that in case $K$ is uncountable the strong version of the theorem holds, namely:

\begin{generalization}[See Theorem~\ref{main}]\label{due} Let $K$ be an \emph{uncountable} algebraically closed field, $X\to S$  a smooth morphism of $K$-varieties and $\E=(E,\nabla)$ a stratified bundle on $X$ relative to $S$ such that, for every  closed point $s$ in a dense open $\tilde{S}\subset S$, the stratified bundle $\E_s=\E\times_Ss$ is trivialized by a finite \'etale cover. Then, there exists a finite \'etale cover of the generic geometric fiber that trivializes $\E_\ebar=\E\times_S\ebar$.
\end{generalization}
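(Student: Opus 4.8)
The plan is to show that the hypothesis at all closed points, together with the uncountability of $K$, forces a \emph{uniform} bound on the trivializing covers over a dense open of $S$, at which point one concludes by Theorem~\ref{eldiv}. Throughout I would use that a stratified bundle on a smooth variety over an algebraically closed field is trivialized by a finite \'etale cover precisely when its Tannakian monodromy group is finite, and that $K$ uncountable forces $K\neq\bar{\F}_p$, so that the strong conclusion of Theorem~\ref{eldiv} is the one available. We may also assume $\dim S\geq 1$, the case $\dim S=0$ (where $\ebar$ lies over the only point of $S$) being immediate from the hypothesis.

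First I would set up boundedness in the family. After shrinking $S$ we may assume $X\hookrightarrow\xb$ has a relative compactification over $S$ whose boundary $D=\xb\setminus X$ is a normal crossings divisor relative to $S$. By Gieseker's description, $\E$ corresponds to a sequence of vector bundles $(E_i)_{i\ge 0}$ on the relative Frobenius twists of $X/S$ with $F^*E_{i+1}\cong E_i$, and when $\E_s$ is trivialized by a finite \'etale cover the descents $E_{s,i}$ become numerically flat with controlled extensions across $D$. Since $\E$ is a single stratified bundle on $X/S$, the ranks, Chern classes of the $E_i$, and the invariants $\Irr(\E_s)$ and $\Sw(\E_s)$ along $D$ are constructible functions of $s$, hence bounded over a dense open. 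By Langer's boundedness theorem the descents $(E_{s,i})_i$ then lie, uniformly in such $s$ and in $i$, in one fixed moduli space $\M$ of finite type over $K$.

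Next I would introduce, for each integer $N\ge 1$, the locus
\[
W_N=\{\,s\in\tilde{S}\ \text{closed}\ :\ \E_s\ \text{is trivialized by a finite \'etale cover whose order is not divisible by}\ p^N\,\}.
\]
The key claim is that each $W_N$ is constructible in $S$. The point is that bounding the $p$-part of the cover order, together with the uniform bounds on $\Sw(\E_s)$ and on the Chern classes of the descents obtained above, confines the data to the finite type space $\M$ and makes the sequence $i\mapsto[E_{s,i}]$ periodic of period and pre-period bounded in terms of $N$; the condition then amounts to finitely many prescribed fiberwise isomorphisms between the flat families $(E_i)$, and the locus where such isomorphisms of flat families of bundles hold is constructible by standard spreading-out. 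I expect this constructibility to be the main obstacle: it is exactly where Langer's boundedness and the uniform control of $\Sw$ and $\Irr$ along $D$ in the non-proper case are genuinely needed, and where the correspondence between the $p$-part of the cover order and the periodicity of the descents must be made precise.

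Finally I would combine these inputs. Every closed point of $\tilde{S}$ yields a finite $\E_s$, and any finite \'etale cover has order of finite $p$-adic valuation, so the closed points of $\tilde{S}$ are covered by the increasing union $\bigcup_{N\ge1}W_N$. If no $W_N$ contained a dense open then, $S$ being irreducible and the $W_N$ constructible, each closure $\overline{W_N}$ would be a proper closed subset of $S$; but a dense open of a variety over an \emph{uncountable} algebraically closed field is never contained in a countable union of proper closed subsets, so some closed point of $\tilde{S}$ would avoid all $\overline{W_N}$, contradicting the hypothesis. Hence some $W_N$ contains a dense open $U\subseteq\tilde{S}$, over which every closed fiber is trivialized by a finite \'etale cover of order not divisible by $p^N$. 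This is precisely the hypothesis of Theorem~\ref{eldiv} (directly when $X/S$ is proper, and in general via its non-proper analogue, proved by the same boundedness method together with the uniform $\Sw$- and $\Irr$-bounds established above); since the generic point of $U$ is that of $S$, applying it produces a finite \'etale cover trivializing $\E_\ebar$, which proves the theorem.
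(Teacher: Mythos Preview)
Your strategy has a genuine gap, and it is precisely the one the paper is designed around. You attempt to reduce to Theorem~\ref{eldiv}, but that theorem requires $X\to S$ to be \emph{proper}, whereas the statement at hand concerns an arbitrary smooth morphism. The ``non-proper analogue, proved by the same boundedness method together with the uniform $\Sw$- and $\Irr$-bounds'' that you invoke in the last paragraph does not exist in the generality you need: the paper's only non-proper analogue is Theorem~\ref{smoothrs}, and it requires the extra hypothesis that $\E_\ebar$ be regular singular. The counterexample of Section~\ref{esempio} shows exactly why this hypothesis cannot be dropped: there one has $\E_s$ \emph{trivial} for every closed $s$ (so $W_1$ already contains every closed point, the $p$-part is as bounded as it could possibly be, and your countable-union argument is vacuously satisfied), yet $\E_\ebar$ is not isotrivial. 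Thus even granting every constructibility and boundedness claim in your first two steps, the final appeal to a non-proper Theorem~\ref{eldiv} fails. The same counterexample also undermines your claim that $\Sw(\E_s)$ and $\Irr(\E_s)$ along $D$ are constructible in $s$: there each $\E_s$ is trivial, hence tame along $\infty$, while $\E_\ebar$ is not regular singular; so these invariants do not spread from closed points to the generic point in the way your argument requires, and the ``controlled extensions across $D$'' you posit for the Frobenius descents are not available without regular singularity.

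The paper's proof (Theorem~\ref{main} together with Corollary~\ref{maincor}) follows a completely different and much shorter route that sidesteps all boundedness and moduli considerations. Since a relative stratified bundle is determined by countably many data, the triple $(X,S,\E)$ descends to a triple $(X',S',\E')$ over an algebraically closed subfield $K'\subset K$ of countable transcendence degree over $\F_p$. Because $K$ is uncountable, there is an embedding $i:k(S')\hookrightarrow K$ extending $K'\subset K$; viewing $i$ as a geometric generic point of $S'$, its image in $S(K)$ is a closed point $s$ with $i^*\E'\simeq\E_s$. By hypothesis $\E_s$ is finite, and then Lemma~\ref{basechange} (invariance of the monodromy group of a finite stratified bundle under algebraically closed extensions) gives that $\E_\ebar$ is finite as well. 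No compactification, no Swan conductors, and no appeal to Theorem~\ref{eldiv} are needed.
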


In the case where $K$ is countable and $X$ is not proper over $S$ there is still room for improvement, using the theory of regular singular stratified bundles (introduced in \cite{Gie:flat}). Roughly speaking, a stratified bundle is regular singular if it has only mild (that is logarithmic) singularities along the divisor at infinity. In characteristic zero there is a parallel notion of regular singular flat connections, and one of the first steps in the proof of Andr\'e's theorem is to show that if a relative flat connection $(E,\nabla)$ on $X$ over $S$ is regular singular on the fiber over all closed points of a dense subset of $S$ then it is regular singular on the generic fiber (see \cite[Lemma~8.1.1]{An:pcur}). In positive characteristic this is no longer true, as our counterexample shows. The converse still holds (see the proof of Lemma \ref{torsion}): if $X$ admits a good compactification over $S$ and  $\E=(E,\nabla)$ is a stratified bundle on $X$ relative to $S$ such that $\E_\ebar$ is regular singular then for every closed point $s$ of some dense open $\tilde{S}\subset S$ the stratified bundle $\E_s$ is regular singular as well. Moreover, assuming $\E_\ebar$ to be regular singular we obtain the same results than in the proper case:

\begin{generalization}[See Theorem~\ref{smoothrs}]\label{tre} Let $K$ be an algebraically closed field of any cardinality and $X\to S$  a smooth morphism of $K$-varieties. Let $\E=(E,\nabla)$ be a stratified bundle on $X$ relative to $S$ such that, for every  closed point $s$ in a dense open $\tilde{S}\subset S$, the stratified bundle $\E_s=\E\times_Ss$ is trivialized by a finite \'etale cover whose order is not divisible by $p^N$ for some fixed $N\geq 0$. Assume moreover that $\E_\ebar=\E\times_S\ebar$ is \emph{regular singular}. Then, if  $K\neq\bar{\F}_p$, there exists a finite \'etale cover of the generic geometric fiber that trivializes $\E_\ebar$. In case $K=\bar{\F}_p$, there exists a finite \'etale cover such that the pullback of $\E_\ebar$ is the direct sum of stratified line bundles. 
\end{generalization}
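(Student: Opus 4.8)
The plan is to reduce to the proper case already settled in Theorem~\ref{eldiv} by absorbing the singularities of $\E$ at infinity into a tame cover, after which the hypothesis on the bounded $p$-power of the trivializing orders is preserved and can be fed into that theorem.

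First I would spread out. Since $\E_\ebar$ is regular singular (a notion defined relative to a good compactification), after shrinking $\tilde S$ I may assume that $X$ admits a good compactification $X\hookrightarrow\bar X$ over $S$, with $\bar X\to S$ smooth proper and $D=\bar X\setminus X$ a strict normal crossings divisor relative to $S$, and that $\E$ extends to a logarithmic stratified bundle $\bar\E$ on $(\bar X,D)$ relative to $S$. By the converse statement used in the proof of Lemma~\ref{torsion}, after shrinking $\tilde S$ once more every closed fiber $\E_s$ is regular singular as well, with log extension $\bar\E\times_S s$. Finally I would shrink $\tilde S$ so that the orders of the local monodromies of $\bar\E$ along the components of $D$ are bounded by a single integer $m$ prime to $p$, which is possible since these orders are generically constant in the family.

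The heart of the argument is to make $\E$ proper. Using Abhyankar's lemma I would build a tame Kummer cover $\pi\colon\bar Y\to\bar X$ of order $m$ totally ramified along $D$, with $\bar Y\to S$ smooth proper and $\pi$ \'etale over $X$, chosen so that the pullback $\F\df\pi^*\bar\E$ has trivial local monodromy along $\pi^{-1}(D)$ and therefore is an honest stratified bundle on $\bar Y$ relative to $S$; that tameness is absorbed in this way is precisely what the theory of regular singular stratified bundles of \cite{Gie:flat} provides. This is the step I expect to be the main obstacle: realizing the cover globally as a smooth scheme rather than merely a root stack, and arranging a single $m$ to work uniformly over $S$. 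Here one uses crucially that the boundary behaviour is tame, i.e.\ of order prime to $p$, whereas the possibly wild but bounded $p$-part of the monodromy lives in the interior and is left untouched by $\pi$.

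Next I would compare monodromy groups on the closed fibers. For $s\in\tilde S$ the bundle $\E_s$ is trivialized by a finite \'etale cover whose order is not divisible by $p^N$, so its monodromy group $G_s$ is a finite \'etale group scheme of order not divisible by $p^N$. The pullback functor along $\pi_s\colon Y_s\to X_s$ exhibits the monodromy group of $\pi_s^*\E_s$ as a closed subgroup of $G_s$, hence again finite of order not divisible by $p^N$; and since $\F_s=\F\times_S s$ is an honest stratified bundle on the proper $\bar Y_s$, restriction to the dense open $Y_s$ is a fully faithful tensor functor identifying its Tannakian category with that of $\pi_s^*\E_s=\F_s|_{Y_s}$, so the two monodromy groups agree. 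Thus every $\F_s$ is trivialized by a finite \'etale cover of $\bar Y_s$ whose order is not divisible by $p^N$. It then remains to apply Theorem~\ref{eldiv} to the smooth proper morphism $\bar Y\to S$ and the stratified bundle $\F$. When $K\neq\bar{\F}_p$ this produces a finite \'etale cover $W\to\bar Y_\ebar$ trivializing $\F_\ebar$; restricting to $Y_\ebar$ and composing with the finite \'etale map $Y_\ebar\to X_\ebar$ (the restriction of $\pi$ over the open part) yields a finite \'etale cover of $X_\ebar$ along which $\E_\ebar$ becomes trivial, because $\F_\ebar|_{Y_\ebar}=\pi^*\E_\ebar$. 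When $K=\bar{\F}_p$ the same theorem makes $\F_\ebar$ a direct sum of stratified line bundles after a finite \'etale cover, and the identical descent along $\pi$ makes $\E_\ebar$ a direct sum of stratified line bundles after a finite \'etale cover of $X_\ebar$, as required.
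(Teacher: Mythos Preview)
Your overall strategy matches the paper's: kill the boundary singularities by a cover so that the bundle extends to a proper variety, then feed it into Theorem~\ref{eldiv}. But two of your steps are not justified as written.

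First, you cannot obtain a good compactification $X\hookrightarrow\bar X$ over $S$ merely by shrinking the base. Regular singularity in the paper is tested against \emph{all} partial good compactifications and does not hand you a global smooth proper $\bar X$ with strict normal crossings boundary; in positive characteristic such a compactification of $X_\ebar$ is not known to exist. The paper fixes this with de~Jong alterations: one replaces $X$ by a generically finite \'etale alteration $X'\to X$ that does admit a good projective compactification relative to $S$ (this is harmless for the conclusion by invariance of the monodromy group on dense opens and under finite \'etale pullback, and regular singularity is preserved under pullback).

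Second, the cover you need is not produced by Abhyankar's lemma. A global Kummer cover of $\bar X$ branched to order $m$ along a strict normal crossings divisor is typically \emph{singular}, and there is in general no smooth $\bar Y\to\bar X$ that is \'etale over all of $X$ with the required ramification along $D$. The paper's substitute is the Kawamata covering trick, adapted to positive characteristic in Theorem~\ref{kawamata}: it produces a smooth projective $\bar Y_\ebar\to\bar X_\ebar$ with $m\mid m_i$ and $(m_i,p)=1$ along every component, but at the cost of ramifying along a larger simple normal crossings divisor $\tilde D_\ebar\supset D_\ebar$. This is enough: by \cite[Prop.~4.11]{Kin:rs} the exponents of the pullback vanish, so the bundle extends to all of $\bar Y_\ebar$, and after spreading out one is in the proper situation of Theorem~\ref{eldiv}. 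Note you do not need $\pi$ \'etale over $X$; it suffices that $\pi$ be finite \'etale over a dense open, since the monodromy group is invariant under restriction to dense opens. The remaining steps (torsion of the exponents via Lemma~\ref{torsion}, with $m$ prime to $p$ because torsion in $\Z_p/\Z$ is automatically prime to $p$; and the bounded-$p$-power hypothesis passing to pullbacks) are fine.
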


The proofs of these generalizations are of two different kinds. The ones of Theorem~\ref{uno} and Theorem~\ref{tre} rely on a reduction to Esnault and Langer's result (\cite[Thm.~7.2]{EL:pcur}). Theorem~\ref{tre} is reduced to Theorem~\ref{uno} using the theory of exponents for regular singular stratified bundles (\cite{Gie:flat},\cite{Kin:rs}) and an adaptation of Kawamata coverings (\cite[Thm.~17]{kaw}) to positive characteristic (see Theorem~\ref{kawamata}). Theorem~\ref{uno} is then proved by reduction to \cite[Thm.~7.2]{EL:pcur} constructing a suitable finite \'etale cover of $X$ that kills the $p$-powers in the orders of the trivializing covers on the closed fibers.

The proof of Theorem~\ref{due} is of another flavour: it relies on the invariance of the Tannakian monodromy group under algebraically closed extension of fields for finite stratified bundles (Lemma~\ref{basechange}), and on the easy but fundamental fact that a (relative) stratified bundle is defined by countably many data.

\begin{acknowledgments} The results contained in this article are part of my PhD work under the supervision of H\'el\`ene Esnault. I would like to thank her for introducing me to the subject, and for her great patience and support. Moreover, I would like to thank Lars Kindler for many useful and pleasant discussions, in particular regarding Lemma \ref{finite}.
\end{acknowledgments}

\begin{notation}
If $S$ is an integral scheme  $k(S)$ will denote its field of fractions, $\eta$ its generic point and $\ebar$ a geometric generic point given by the choice of an algebraically closure $\overline{k(S)}$ of $k(S)$.
If $K$ is a field a \emph{variety} over $K$ is a separated integral scheme of finite type over $K$.
\end{notation}

\section{The category of stratified bundles}
Throughout the whole article \(K\) will denote an algebraically closed field of positive characteristic \(p\) and $u:X\to S$ a smooth morphism of varieties over $K$, of relative dimension $d$. Let $\D_{X/S}$ be the quasi-coherent $\Oh_X$-module of relative differential operators as defined in \cite[§16]{EGA}; recall that if $\U$ is an open subscheme of $X$ admitting global coordinates $x_1,\dotsc,x_d$ relative to $S$, then for every $k\in\N$ there are $\Oh_S$-linear maps $\de_{x_i}^{(k)}:\Oh_{\U}\to\Oh_{\U}$ given by
\[\de_{x_i}^{(k)}(x_j^h)=\delta_{ij}\bin{h}{k}(x_j^{h-k})\]
where $\delta_{ij}$ is the Kronecker delta. These maps are differential operators of order $k$ and generate locally the ring of differential operators:
\[\D_{X/S|_\U}=\Oh_{\U}\big[\de_{x_i}^{(k)}\mid i\in\{1,\dotsc,d\},k\in\N_{>0}\big].\]
For higher differential operators we have an extension of the Leibniz rule, namely if $f,g\in\Oh_{\U}$ then
\begin{equation}\label{product}\de_{x_i}^{(k)}(fg)=\sum_{\substack{a+b=k\\a,b\geq 0}}\de_{x_i}^{(a)}(f)\de_{x_i}^{(b)}(g).\end{equation}

\begin{definition}A \emph{stratified bundle $\E$ (relative to S)}  is a $\Oh_X$-locally free module $E$ of finite rank $r$ endowed with a $\D_{X/S}$-action extending the $\Oh_X$-module structure via the inclusion $\Oh_X\subset\D_{X/S}$. A \emph{morphism} of stratified bundles is a morphism of $\D_{X/S}$-modules. We denote by $\Strat(X/S)$ the category of stratified bundles on $X$ relative to $S$; if $S=\Spec K$ we use the notation $\Strat(X/K)$ for $\Strat(X/\Spec K)$. The structure sheaf $\Oh_X$ together with the natural $\D_{X/S}$-action is denoted by $\I_{X/S}$; if $S=\Spec K$ we use the notation $\I_{X/K}$ for $\I_{X/\Spec K}$.   A stratified bundle is \emph{trivial} if it is isomorphic to $\I_{X/S}^{\oplus r}$ for some $r\in\N$.
\end{definition}
If $h:Y\to X$ is a morphism of smooth $S$-varieties  then the pullback along $h$ induces a functor $h^*:\Strat(X/S)\to\Strat(Y/S)$ and if $h$ is finite and \'etale then the pushforward along $h$ induces a functor $h_*:\Strat(Y/S)\to\Strat(X/S)$.
For $\E,\F\in\Strat(X/S)$ we can construct the dual $\E^\lor$, the tensor product $\E\otimes\F$ and the direct sum $\E\oplus\F$, all of which are objects of $\Strat(X/S)$.

\section{The monodromy group}\label{monodromy}
If $X$ is a smooth connected $K$-variety, $\Strat(X/K)$ is an abelian tensor rigid $K$-linear category and the choice of a rational point $x\in X(K)$ defines a fiber functor to the category of finite dimensional $K$-vector spaces by:
\[\begin{split}
\omega_x:\Strat(X/S)&\to \Vt_K \\
\E&\mapsto E_x
\end{split}\]
where $E$ is the vector bundle underlying $\E$ (\cite[§VI.1]{Sa:ct}). Hence $(\Strat(X/S),\omega_x)$ is a neutral Tannakian category and by Tannakian duality (\cite[Thm.~2.11]{DM}) there exists an affine group scheme $\pi_1^{\Strat}(X,x)\doteq\pi(\Strat(X/S),\omega_x)=\aut_K(\omega_x)$ over $K$ such that $\Strat(X/K)$ is equivalent via $\omega_x$ to the category of finite dimensional representations of $\pi_1^{\Strat}(X,x)$ over $K$. For every $\E\in\Strat(X/K)$ we denote by $\langle\E\rangle_\otimes\subset\Strat(X/K)$ the full Tannakian subcategory spanned by $\E$ with fiber functor $\omega_x$ defined as above. The affine group scheme  $\pi(\E,x)\doteq\pi(\langle\E\rangle_\otimes,\omega_x)$ is called the \emph{monodromy group} of $\E$. If $\U\subset X$ is an open dense subscheme of $X$ then by \cite[Lemma~2.5(a)]{Kin:rs} the restriction functor $\rho_{\U}:\langle\E\rangle_\otimes\to\langle\E_{|\U}\rangle_\otimes$ is an equivalence; hence, in particular, the monodromy group of $\E$ is invariant under restriction to dense open subschemes. Moreover, as $K$ is algebraically closed, the monodromy group does not depend on the choice of $x$, up to non-unique isomorphism (this can be deduced from \cite[Thm.~3.2]{DM}). We will hence sometimes use the notation $\pi(\E)$ instead of $\pi(\E,x)$. 

\begin{definition} We say that $\E\in\Strat(X/K)$ is \emph{finite} if its monodromy group is finite. By what we have just remarked, this property is independent of the choice of $x$. We say that $\E$ is \emph{isotrivial} if it is \'etale trivializable; that is, there exists $h:Y\to X$ finite \'etale cover such that $h^*\E$ is trivial in $\Strat(Y/K)$.
\end{definition}
These two properties are equivalent:

\begin{lemma}\label{finite} For a stratified bundle $\E\in\Strat(X/K)$ the following are equivalent:
\begin{itemize}
\item[i)] $\E$ is isotrivial;
\item[ii)]$\E$ is finite.
\end{itemize}
Moreover, if $\E$ is finite, then there exists an \'etale $\pi(\E,x)$-torsor $h_{\E,x}:Y_{\E,x}\to X$, called the \emph{Picard--Vessiot torsor} of $\E$ such that, for any $\E'\in\Strat(X/K)$, the pullback $h_{\E,x}^*\E'$ is trivial if and only if $\E'\in\langle\E\rangle_\otimes$. 

Finally, for a finite \'etale cover $h:Y\to X$ such that $h^*\E$ is trivial, the following conditions are equivalent:
\begin{itemize}
\item[i)]$h:Y\to X$ is the Picard--Vessiot torsor for $\E$;
\item[ii)] every finite \'etale cover trivializing $\E$ factors (non-uniquely) through $h:Y\to X$;
\item[iii)] $h:Y\to X$ is Galois and $\langle\E\rangle_\otimes=\langle h_*\I_{Y/K}\rangle_\otimes$;
\item[iv)] $h:Y\to X$ is Galois of Galois group $\pi(\E,x)(K)$.
\end{itemize}
\end{lemma}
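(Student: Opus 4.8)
The plan is to run every assertion through the étale fundamental group $\pi_1^{\text{\'et}}(X,x)$, the only non-formal ingredient being that a \emph{finite} monodromy group scheme is automatically étale. Concretely, I would first record the crux: if $G\doteq\pi(\E,x)$ is finite, then it is a finite constant group scheme. The reason is that $\Strat(X/K)$ is a category of Frobenius-divided sheaves (\cite{Gie:flat}, \cite{Kin:rs}), so Frobenius pullback is a tensor auto-equivalence; at the level of monodromy groups this exhibits the relative Frobenius $G\to G^{(p)}$ as an isomorphism, and over the perfect field $K$ an isomorphic relative Frobenius forces $G$ to be étale, i.e. a finite constant group scheme with group $\Gamma\doteq G(K)$. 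I regard this as the main obstacle: it is exactly where the positive-characteristic $\D_{X/K}$-structure is used rather than a bare flat bundle, and without it $G$ could be infinitesimal and $\E$ would not be trivialised by any étale cover. Granting it, Galois descent (\cite{Kin:rs}) identifies, for a finite étale Galois cover $h\colon Y\to X$ with group $\Gamma'$, the full subcategory of bundles with $h^*(-)$ trivial with $\mathrm{Rep}_K(\Gamma')$, sending $h_*\I_{Y/K}$ to the regular representation; since the latter generates, $\langle h_*\I_{Y/K}\rangle_\otimes=\mathrm{Rep}_K(\Gamma')$ and $\pi(h_*\I_{Y/K})=\Gamma'$. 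Letting $\Gamma'$ vary over finite Galois covers assembles an equivalence between the finite objects of $\Strat(X/K)$ and the finite continuous representations of $\pi_1^{\text{\'et}}(X,x)$.

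For the first equivalence, (i)$\Rightarrow$(ii) is then immediate: replacing a trivialising cover by its Galois closure $h$, the adjunction $\E\hookrightarrow h_*h^*\E=(h_*\I_{Y/K})^{\oplus r}$ puts $\E$ in $\langle h_*\I_{Y/K}\rangle_\otimes$, so $\langle\E\rangle_\otimes\subseteq\mathrm{Rep}_K(\Gamma')$ and $G$ is a quotient of $\Gamma'$, hence finite. For (ii)$\Rightarrow$(i) the crux gives $G=\Gamma$ constant, so under the dictionary $\E$ corresponds to a representation $\rho_\E$ with image exactly $\Gamma$; its kernel $N_\E\trianglelefteq\pi_1^{\text{\'et}}(X,x)$ is open and normal, and since $\rho_\E$ is onto $\Gamma$ it defines a connected Galois cover $h_{\E,x}\colon Y_{\E,x}\to X$ with group $\Gamma$, which is finite étale and trivialises $\E$. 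This $h_{\E,x}$ realises the Tannakian $\Gamma$-torsor, i.e. the functor of tensor isomorphisms between the constant fibre functor $\F\mapsto\omega_x(\F)\otimes_K\Oh_X$ and the tautological one; its universal property is the descent identification applied to $h_{\E,x}$, as both $\langle\E\rangle_\otimes$ and the category trivialised by $h_{\E,x}$ are $\mathrm{Rep}_K(\Gamma)$, whence $h_{\E,x}^*\E'$ is trivial if and only if $\E'\in\langle\E\rangle_\otimes$.

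The four final conditions I would read off from the subgroup dictionary. A finite étale cover with $h^*\E$ trivial corresponds to an open normal $N\subseteq N_\E$, it factors through the cover of an $N'$ exactly when $N\subseteq N'$, and it is Galois with group $\pi_1^{\text{\'et}}(X,x)/N$. Condition (iv) says $N$ is normal with $\pi_1^{\text{\'et}}(X,x)/N$ of order $|\Gamma|$; since $N\subseteq N_\E$ yields a surjection $\pi_1^{\text{\'et}}(X,x)/N\twoheadrightarrow\pi_1^{\text{\'et}}(X,x)/N_\E=\Gamma$, an order count forces $N=N_\E$, which is (i). Condition (ii) says $N'\subseteq N$ whenever $N'\subseteq N_\E$; taking $N'=N_\E$ gives $N_\E\subseteq N$, and together with $N\subseteq N_\E$ this is again $N=N_\E$. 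Condition (iii) reads $\mathrm{Rep}_K(\pi_1^{\text{\'et}}(X,x)/N)=\mathrm{Rep}_K(\pi_1^{\text{\'et}}(X,x)/N_\E)$ as subcategories of $\Strat(X/K)$, equivalent to $N=N_\E$. As each of (ii), (iii), (iv) follows trivially from (i), the four are equivalent, completing the plan.
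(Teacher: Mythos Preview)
Your argument is essentially correct and complete, but the presentation differs from the paper's in a way worth noting. One small slip: when you write ``a finite \'etale cover with $h^*\E$ trivial corresponds to an open \emph{normal} $N\subseteq N_\E$'', the word ``normal'' is unjustified---an arbitrary trivialising cover corresponds only to an open subgroup $N\subseteq N_\E$, normal precisely when $h$ is Galois. Fortunately your subsequent arguments do not use normality until it is explicitly part of the hypothesis (in (iii) and (iv)), and your proof of (ii)$\Rightarrow$(i) works verbatim for an arbitrary open $N$, concluding $N=N_\E$ and hence a posteriori normality.

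As to the comparison: the paper outsources the first equivalence to \cite[Lemma~1.1]{EL:pcur} and the existence of the Picard--Vessiot torsor to \cite[Prop.~2.15, Cor.~2.16]{Kin:rs}, whereas you unpack the crucial point---that a finite $\pi(\E,x)$ is \'etale---via the Frobenius auto-equivalence (this is the content of \cite[Cor.~12]{dS:fun}, which the paper invokes elsewhere). For the four-way equivalence the paper argues categorically: its (i)$\Rightarrow$(ii) is a geometric base-change argument (form $\tilde Y\times_X Y_{\E,x}$, show the projection is a trivial cover via $p_{1*}\I_Z$, then take a section), while you translate everything into the subgroup lattice of $\pi_1^{\text{\'et}}(X,x)$ and argue by containments. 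Your route is more uniform and makes the role of $N_\E=\ker\rho_\E$ transparent; the paper's route avoids setting up the full dictionary and stays closer to the Tannakian language, at the cost of a slightly ad hoc base-change step. Both rest on the same external inputs (smoothness of the monodromy group, Galois descent for stratified bundles), so the difference is one of packaging rather than substance.
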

\begin{proof}
The first part of the lemma is \cite[Lemma~1.1]{EL:pcur}. As for the second part, first notice that point (b) and (f) of \cite[Prop.~2.15]{Kin:rs}, together with \cite[Cor.~2.16]{Kin:rs} imply that if $h:Y\to X$ is a finite \'etale cover trivializing $\E$ then $\langle\E\rangle_\otimes\subset\langle h_*\I_{Y/K}\rangle_\otimes$ and that  $\langle\E\rangle_\otimes=\langle h_{\E,x*}\I_{Y_{\E,x}/K}\rangle_\otimes$. Moreover, if $h:Y\to X$ is Galois of Galois group $G$, then $\pi(h_*\I_{Y/K},x)$ is the finite constant group $G$ and if $\tilde{h}:\tilde{Y}\to X$ is an \'etale cover factoring through $h$ then $\langle h_*\I_{Y/K}\rangle_\otimes\subset\langle \tilde{h}_*\I_{\tilde{Y}/K}\rangle_\otimes$. We are now ready to prove the rest of the lemma.
\begin{itemize}
\item[(i)$\Rightarrow$(ii)]Because  $\langle\E\rangle_\otimes=\langle h_{\E,x*}\I_{Y_{\E,x}/K}\rangle_\otimes$, a cover $\tilde{h}:\tilde{Y}\to X$ trivializes $\E$ if and only if it trivializes $h_{\E,x*}\I_{Y_{\E,x}/K}$. Let $Z=\tilde{Y}\times_X Y_{\E,x}$, and let $p_1$ and $p_2$ be the projections on the first and second factor. Then by flat base change (notice that the flat base change morphism is compatible with the $\D_{\tilde{Y}/K}$-action) there is an isomorphism of $\D_{\tilde{Y}/K}$-modules $\tilde{h}^*h_{\E,x*}\I_{Y_{\E,x}/K}\simeq p_{1*}\I_{Z/K}$; hence, the  latter is also a trivial stratified bundle. This, together with \cite[Cor.~2.17]{Kin:rs}, implies that $p_1:Z\to \tilde{Y}$ is a trivial covering. In particular, it admits a section $s$; hence, $\tilde{h}=s\circ p_2\circ h_{\E,x}$ and $\tilde{h}$ factors through $h$.

\item[(ii)\(\Rightarrow\)(iii)] Because $h$ trivializes $\E$, we have the inclusion $\langle\E\rangle_\otimes\subset\langle h_*\I_{Y/K}\rangle_\otimes$. On the other side, by assumption, $h_{\E,x}:Y_{\E,x}\to X$ factors through $h:Y\to X$; hence, $\langle h_*\I_{Y/K}\rangle_\otimes\subset \langle h_{\E,x*}\I_{Y_{\E,x}/K}\rangle_\otimes=\langle\E\rangle_\otimes$.

\item[(iii)$\Rightarrow$(iv)] As $\langle\E\rangle_\otimes=\langle h_*\I_{Y/K}\rangle_\otimes$, then we have the equality $\pi(\E,x)=\pi(h_*\I_{Y/K},x)$ and as $h:Y\to X$ is Galois, then its Galois group is $\pi(h_*\I_{Y/K})(K)=\pi(\E,x)(K)$.

\item[(iv)$\Rightarrow$ (i)] By what we already proved there must be a factorization $f:Y\to Y_{\E,x}$ such that $h=h_{\E,x}\circ f$. Hence, if $G$ is the Galois group of $h:Y\to X$ then $h_\E:Y_\E\to X$ corresponds to a normal subgroup $H$ of $G$. But by assumption $G=\pi(\E,x)(K)=H$; hence, $h=h_\E$. \qedhere
\end{itemize}
\end{proof}

\begin{corollary}\label{minimal}If $\E\in \Strat(X/K)$ is finite then the set of finite \'etale covers of $X$ trivializing $\E$ has a minimal element which is Galois of Galois group $\pi(\E,x)(K)$.
\end{corollary}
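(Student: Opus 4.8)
The plan is to show that the Picard--Vessiot torsor $h_{\E,x}:Y_{\E,x}\to X$ furnished by Lemma~\ref{finite} is precisely the minimal element sought; the corollary is essentially a repackaging of the equivalences already established there. First I would make the ordering explicit: on the set of finite \'etale covers of $X$ trivializing $\E$ I declare $h'\leq h$ whenever $h$ factors through $h'$ over $X$, so that a minimum, if it exists, is the coarsest trivializing cover, through which every other one factors.

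Since $\E$ is finite, Lemma~\ref{finite} guarantees that $h_{\E,x}$ exists and that $h_{\E,x}^*\E'$ is trivial for every $\E'\in\langle\E\rangle_\otimes$. Applying this with $\E'=\E$ shows that $h_{\E,x}$ itself trivializes $\E$, so it genuinely belongs to the set in question; in particular the hypothesis $h^*\E$ trivial of the final part of Lemma~\ref{finite} holds for $h=h_{\E,x}$, which therefore satisfies condition (i) tautologically.

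Next I would invoke the equivalence (i)$\Leftrightarrow$(ii) of that final part: every finite \'etale cover trivializing $\E$ factors (non-uniquely) through $h_{\E,x}$. With respect to the order above this says precisely that $h_{\E,x}$ is the least (hence, a fortiori, a minimal) element. Finally the equivalence (i)$\Leftrightarrow$(iv) identifies $h_{\E,x}$ as Galois with Galois group $\pi(\E,x)(K)$, giving the full statement.

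I do not expect a real obstacle, as all the substance is contained in Lemma~\ref{finite}. The only points deserving a moment's care are verifying that the Picard--Vessiot torsor actually trivializes $\E$ itself, rather than merely the objects of $\langle\E\rangle_\otimes$ in the abstract (this follows by specializing the defining property to $\E'=\E$), and being explicit about the factorization order so that the word \emph{minimal} is unambiguous.
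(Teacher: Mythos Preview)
Your proposal is correct and matches the paper's approach: the paper states the corollary without proof, treating it as an immediate consequence of Lemma~\ref{finite}, and your argument spells out exactly this deduction via the equivalences (i)$\Leftrightarrow$(ii) and (i)$\Leftrightarrow$(iv).
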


By \cite[Cor.~12]{dS:fun} for every $\E\in\Strat(X/K)$ the group scheme $\pi(\E,x)$ is smooth (which is equivalent to being reduced). In particular, as $K$ is algebraically closed, if $\E$ is finite we are allowed  to identify the abstract group $\pi(\E,x)(K)$ and the algebraic group $\pi(\E,x)$.
Given a finite stratified bundle $\E\in\Strat(X/K)$ it is straightforward to see that for every $L\supset K$ algebraically closed field extension $\E_L=\E\otimes_KL\in\Strat(X_L/L)$ is finite as well. In fact, the following stronger statement holds:

\begin{lemma}\label{basechange} Let $\E\in\Strat(X/K)$ and let $L\supset K$ be an algebraically closed field extension such that $\E_L$ is finite. Then for every $L'\supset K$ algebraically closed such that $\trdeg_KL'\geq\trdeg_KL$ (or, if both are infinite, such that there exists an immersion $L\hookrightarrow L'$ which is the identity on $K$) we have that $\E_{L'}$ is finite. Moreover for any $x\in X(K)$  
\[\pi(\E_L,x)(L)\simeq\pi(\E_{L'},x)(L'),\] 
 where we consider $x\in X_L(L)$ via $K\subset L$ and similarly for $L'$.
\end{lemma}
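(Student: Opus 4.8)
The plan is to reduce the whole statement to one clean base change assertion and then prove that assertion via the Picard--Vessiot torsor. Concretely, it suffices to show: if $\F\in\Strat(X_L/L)$ is finite and $M\supset L$ is algebraically closed, then $\F_M\in\Strat(X_M/M)$ is finite and $\pi(\F,x)(L)\simeq\pi(\F_M,x)(M)$. Under either hypothesis on $L'$ one obtains an embedding $\iota\colon L\hookrightarrow L'$ over $K$: when $\trdeg_K L'\geq\trdeg_K L$ with finite degrees one sends a transcendence basis $B$ of $L/K$ into one of $L'/K$ and extends to $L=\overline{K(B)}$ inside the algebraically closed $L'$, and when both degrees are infinite such an $\iota$ is assumed. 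Taking $\F=\E_L$ and $M=L'$ (viewed over $L$ through $\iota$) gives $\E_{L'}\cong(\E_L)_{L'}=\F_M$, so the special case delivers both the finiteness of $\E_{L'}$ and the asserted isomorphism, with $x$ transported along $K\subset L\subset L'$.

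For the special case I would transport the Picard--Vessiot torsor. Since $\F$ is finite, Lemma~\ref{finite} and Corollary~\ref{minimal} give a connected Galois cover $h\colon Y\to X_L$ of group $G=\pi(\F,x)(L)$ that trivializes $\F$ and through which every trivializing cover factors. As $L$ is algebraically closed, $Y$ is geometrically connected, so $Y_M=Y\times_L M$ is connected and $h_M\colon Y_M\to X_M$ is again finite \'etale and Galois of group $G$. Because $h_M^*\F_M=(h^*\F)_M$ is the base change of a trivial stratified bundle, it is trivial; hence $\F_M$ is isotrivial and therefore finite by Lemma~\ref{finite}.

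It remains to see that the monodromy does not shrink, i.e.\ that $\pi(\F_M,x)(M)=G$. By Corollary~\ref{minimal} over $M$ the minimal cover trivializing $\F_M$ is a connected Galois quotient of $Y_M$, hence of the form $(Y/H)_M$ for some normal $H\trianglelefteq G$, using that $Y/X_L$ is Galois of group $G$ and that intermediate covers are compatible with the base change $M/L$. The key point is that triviality descends along $M/L$: for a stratified bundle $\G$ on a smooth $L$-variety $Z$ one has $\operatorname{Hom}_{\Strat(Z_M/M)}(\I_{Z_M/M},\G_M)\simeq\operatorname{Hom}_{\Strat(Z/L)}(\I_{Z/L},\G)\otimes_L M$, since $\operatorname{Hom}$ out of the unit object computes horizontal global sections, and these commute with the flat base change $M/L$ compatibly with the $\D_{Z/L}$-action (exactly the flat base change compatibility already invoked in the proof of Lemma~\ref{finite}). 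Comparing dimensions of horizontal sections and using faithful flatness of $M/L$, the bundle $\G$ is trivial as soon as $\G_M$ is. Applied to $\G=(Y/H)^*\F$ this shows that $Y/H$ already trivializes $\F$ over $L$; Lemma~\ref{finite} then produces a map $Y/H\to Y$ over $X_L$, whose composite with the quotient $Y\to Y/H$ is an endomorphism of the connected cover $Y$ and hence an isomorphism, forcing $H=1$. Thus the minimal cover trivializing $\F_M$ is $Y_M$ itself, and $\pi(\F_M,x)(M)=G$.

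I expect the main obstacle to be precisely this descent step: one must verify carefully that horizontal global sections (equivalently $\operatorname{Hom}$ out of $\I$) commute with the faithfully flat extension $M/L$ and that the identification respects the $\D$-action, so that triviality of $\G_M$ genuinely descends to triviality of $\G$. Once this is in place, everything else is formal, since the Picard--Vessiot torsor base changes to a connected Galois cover of the same group and its minimality is preserved.
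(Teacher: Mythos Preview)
Your reduction to the special case---finite $\F$ over $L$, show $\F_M$ finite with the same monodromy for any algebraically closed $M\supset L$---is exactly the paper's reduction, and your handling of the embedding $L\hookrightarrow L'$ and of the finiteness of $\F_M$ via base change of the Picard--Vessiot torsor matches the paper as well. The one substantive difference is in how you check that $Y_M$ remains the \emph{minimal} trivializing cover. The paper uses criterion~(iii) of Lemma~\ref{finite}: from $\langle\F\rangle_\otimes=\langle h_*\I_{Y/L}\rangle_\otimes$ one gets $\langle\F_M\rangle_\otimes=\langle (h_M)_*\I_{Y_M/M}\rangle_\otimes$ directly, since subquotient relations are preserved by $-\otimes_L M$ and pushforward commutes with this flat base change; hence $h_M$ is Picard--Vessiot for $\F_M$. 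You instead argue by descent of triviality: the Picard--Vessiot torsor of $\F_M$ is some quotient $(Y/H)_M$, and the identification of horizontal sections $\operatorname{Hom}(\I,\G_M)\simeq\operatorname{Hom}(\I,\G)\otimes_L M$ forces $(Y/H)^*\F$ to be trivial already over $L$, whence $H=1$ by minimality of $Y$. Both routes are correct; the paper's is marginally slicker because preservation of Tannakian spans under base change is immediate, whereas your descent step requires the horizontal-sections computation you rightly flag as the crux. On the other hand, your argument isolates a reusable descent lemma (triviality of a stratified bundle descends along algebraically closed extensions) that is worth recording in its own right.
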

\begin{proof}
Let $L$ and $L'$ as in the hypothesis, then we can construct an immersion $L\hookrightarrow L'$ which is the identity on $K$, just by sending any transcendence basis of $L$ to a algebraically independent set in $L'$ over $K$ and using the fact that $L'$ is algebraically closed to see that this extends to an immersion $L\hookrightarrow L'$. Hence, we have reduced the problem to proving that if $\E$ is finite and $L\supset K$ is an algebraically closed field extension then $\E_L$ is finite and has the same monodromy group of $\E$ as abstract groups. In order to do so we need first to establish a result on Galois covers:
\begin{claim}
Let $h:Y\to X$ be a Galois cover of Galois group $G$ and let $h_L:Y_L\to X_L$ the extension of scalars of $h:Y\to X$ to $L$, then $h_L$ is a Galois cover of Galois group $G$.
\end{claim}
\begin{proof}
Certainly $h_L:Y_L\to X_L$ is a finite \'etale morphism as these properties are stable under base change. We are left to check that (i) $Y_L$ is connected, (ii) $\Aut(Y_L/X_L)$ acts transitively on the fiber over some geometric point of $X_L$ and finally (iii) $\Aut(Y_L/X_L)\simeq \Aut(Y/X)$.
\begin{itemize}
\item[i)] As $K$ is algebraically closed (hence, in particular, separably closed) $Y$ is connected if and only if $Y_L$ is connected for any field extension $L\supset K$. In particular, $Y_L$ is connected.
\item[ii)] Let $x\in X_L(L)$ be any closed (in particular, geometric) point of $X_L$, then the composition $\bar{x}:\Spec(L)\to X_L\to X$ is a geometric point for $X$. As $h:Y\to X$ is Galois, $\Aut(Y/X)$ acts transitively on $Y_{\bar{x}}=Y\times_X\bar{x}=Y_L\times_{X_L}x=Y_{L,x}$. Now, the action of $\Aut(Y/X)$ on $Y_{L,x}$ factors through $\Aut(Y_L/X_L)$ via the inclusion $\Aut(Y/X)\subset\Aut(Y_L/X_L)$ defined by $\phi\mapsto \phi_L$. Hence, the action of $\Aut(Y_L/X_L)$ on $Y_{L,x}$ is transitive as well; therefore, $h_L:Y_L\to X_L$ is Galois.
\item[iii)]As $Y_{\bar{x}}=Y_{L,x}$ and both $h$ and $h_L$ are Galois, it follows that the order of their Galois group is the same, as it is the cardinality of the respective geometric fibers over $\bar{x}$ and over $x$. Moreover we have a natural inclusion $\Aut(Y/X)\subset\Aut(Y_L/X_L)$ so as they have the same cardinality they must be equal; hence, $\Aut(Y_L/X_L)=G$.\qedhere
\end{itemize}
\end{proof}
Until the end of the proof let us denote by $h_{\E,x}:Y\to X$ the Picard--Vessiot torsor of $\E$ (see Lemma \ref{finite}), then $h_{\E,x}\otimes_KL:Y_L\to X_L$ is a Galois cover trivializing $\E_L$ which is then finite, by Lemma \ref{finite}. Recall that  $\langle (h_{\E,x})_*\I_{Y/K}\rangle_\otimes=\langle\E\rangle_\otimes$. But then in particular, $\langle (h_{\E,x})_*\I_{Y/K}\otimes_KL\rangle_\otimes=\langle\E_L\rangle_\otimes$ and  as $(h_{\E,x})_*\I_{Y/K}\otimes_KL=(h_{\E,x}\otimes_KL)_*\I_{Y_L/L}$, it follows that $\langle(h_{\E,x}\otimes_KL)_*Y_L \rangle_\otimes=\langle\E_L\rangle_\otimes$. Hence, by Lemma \ref{finite}, we have that $h_{\E,x}\otimes_KL:Y_L\to X_L$ is the minimal trivializing cover for $\E_L$. Now, the Galois group of $h_{\E,x}$ is the same as that of $h_{\E,x}\otimes_KL$ by the previous claim; hence, again by Lemma \ref{finite}, we have that $\pi(\E_L,x)(L)=\pi(\E,x)(K)$.
\end{proof}

Finite stratified bundle have an additional propriety that will turn out to be very useful to prove that some stratified bundle cannot be isotrivial:

\begin{lemma}\label{finito}
Let $\E\in\Strat(X/K)$ be a finite stratified bundle. Then there exists a subfield $K'\subset K$ of finite type over $\F_p$ over which $X$ and $\E$ are defined; that is, there exists $X'$ smooth variety over $K'$ and $\E'\in\Strat(X'/K')$ such that $X=X'\times_{\Spec K'}\Spec K$ and $\E=\E'\otimes_{K'}K$.
\end{lemma}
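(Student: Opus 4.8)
The guiding idea is that the finiteness of $\E$ collapses the a priori infinite amount of data defining a stratified bundle into a finite package. Indeed, a $\D_{X/K}$-action is specified by the action of all the operators $\de_{x_i}^{(k)}$, $k\in\N$, so there is no formal reason for a general $\E$ to be defined over a finitely generated subfield of $K$. The finiteness hypothesis removes exactly this difficulty: by Lemma~\ref{finite} the bundle $\E$ is isotrivial, so there is a finite \'etale Galois cover $h\colon Y\to X$ of group $G=\pi(\E,x)(K)$ (for instance the Picard--Vessiot torsor) such that $h^*\E\cong\I_{Y/K}^{\oplus r}$. All of the $\D$-structure of $\E$ is then encoded in the Galois descent datum for $h$ with respect to this trivialization, and this datum is finite.

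First I would spread out the geometric data. Since $X$ is of finite type over $K$, and the Galois cover $h\colon Y\to X$ together with the $G$-action on $Y$ is likewise a finite-type datum, the standard limit arguments of \cite[IV, §8 and §17]{EGA} produce a subfield $K_1\subset K$ of finite type over $\F_p$, a smooth $K_1$-variety $X_1$ with $X=X_1\times_{K_1}K$, and a finite \'etale Galois cover $h_1\colon Y_1\to X_1$ of group $G$ whose base change to $K$ is $h$. Smoothness, finiteness, \'etaleness and the Galois property are all finitely presented conditions, hence descend to $K_1$ after enlarging it if necessary.

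It remains to descend the descent datum itself. The cover $h$ being Galois with group $G$, reconstructing $\E$ from $h^*\E\cong\I_{Y/K}^{\oplus r}$ amounts to giving, for each $g\in G$, an isomorphism $c_g\colon\I_{Y/K}^{\oplus r}\to\sigma_g^*\I_{Y/K}^{\oplus r}$ in $\Strat(Y/K)$ satisfying the cocycle condition, where $\sigma_g\in\Aut(Y/X)$. As $\sigma_g$ is an automorphism of $Y$ over $K$ it fixes the unit object, so $\sigma_g^*\I_{Y/K}^{\oplus r}\cong\I_{Y/K}^{\oplus r}$ canonically; and since $Y$ is connected and $K$ is algebraically closed, the automorphisms of $\I_{Y/K}^{\oplus r}$ in $\Strat(Y/K)$ are exactly $\GL_r(K)$. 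Thus the whole datum reduces to a homomorphism $\rho\colon G\to\GL_r(K)$, that is, to finitely many matrices. Enlarging $K_1$ to a subfield $K'\subset K$ of finite type over $\F_p$ containing all their entries, the representation $\rho$ is defined over $K'$; by effectivity of \'etale descent (compatible with the $\D$-action, since $h$ is \'etale and $\D_{Y/K}\cong h^*\D_{X/K}$) the pair consisting of $Y_1\times_{K_1}K'\to X'$, where $X'=X_1\times_{K_1}K'$, together with $\rho$ reconstructs a stratified bundle $\E'\in\Strat(X'/K')$ satisfying $\E'\otimes_{K'}K\cong\E$. The only genuinely non-formal step is this passage from the infinite $\D$-datum to a single finite-dimensional representation of $G$, which is available precisely because $\E$ is finite; everything else is routine spreading-out and descent.
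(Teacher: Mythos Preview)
Your proof is correct and follows a slightly different, though closely related, line from the paper's. The paper's argument is purely Tannakian: it considers $\Ha=(h_{\E,x})_*\I_{Y_{\E,x}/K}$ and uses that $\E\in\langle\Ha\rangle_\otimes$, so $\E$ is a subquotient $\tilde{\mathbb{P}}/\bar{\mathbb{P}}$ of some $\mathbb{P}\in\Z[\Ha,\Ha^\lor]$. Since $h_{\E,x}$ (and hence $\Ha$) descends to a subfield $K''$ of finite type over $\F_p$, and since specifying the two subobjects $\bar{\mathbb{P}}\subset\tilde{\mathbb{P}}\subset\mathbb{P}$ requires only finitely many additional parameters, everything descends to some $K'$ of finite type over $K''$.

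Your approach instead makes the Tannakian equivalence $\langle\Ha\rangle_\otimes\simeq\mathrm{Rep}_K(G)$ explicit via Galois descent along the $G$-torsor $h$: the stratified bundle $\E$ is encoded by a single homomorphism $\rho\colon G\to\GL_r(K)$, visibly a finite amount of data. This is arguably more elementary, as it avoids invoking the general fact that every object of $\langle\Ha\rangle_\otimes$ is a subquotient of a tensor polynomial in $\Ha,\Ha^\lor$; on the other hand you have to spell out that \'etale descent is compatible with the $\D$-action (which you do, via $\D_{Y/K}\cong h^*\D_{X/K}$), a step the paper's argument sidesteps entirely. Both proofs rest on the same essential input, namely the existence of the Picard--Vessiot torsor and its descent to a finitely generated subfield.
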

\begin{proof}
Let $h_{\E,x}:Y_{\E,x}\to X$ be the Picard--Vessiot torsor of $\E$ (see Lemma \ref{finite}), and let $\Ha=(h_{\E,x})_*\I_{Y_{\E,x}/K}$, then (see Lemma \ref{finite}) $\E\in\langle \Ha\rangle_\otimes$.
Certainly there exists $K''$ of finite type over $\F_p$ on which $h_{\E,x}:Y_{\E,x}\to X$ is defined; hence, $\Ha$ is also defined over $K''$. Notice that $\E$ is a subquotient of $\mathbb{P}$ where $\mathbb{P}\in\Z[\Ha,\Ha^\lor]$ (see e.g. \cite[def.~2.4]{Kin:rs}); that is, $\E\simeq \tilde{\mathbb{P}}/\bar{\mathbb{P}}$ with $\bar{\mathbb{P}}\subset\tilde{\mathbb{P}}\subset\mathbb{P}$. It is clear that $\tilde{\mathbb{P}}$ and $\bar{\mathbb{P}}$ are defined over some extension $K'$ of finite type of $K''$ (thus  over $\F_p$). Therefore, so does $\E\simeq \tilde{\mathbb{P}}/\bar{\mathbb{P}}$.
\end{proof}

In particular, we have the following:
\begin{corollary}
Let $\E\in\Strat(X/K)$ with $K$ algebraically closed, such that for some algebraically closed field extension $L\supset K$ the stratified bundle $\E_L$ is finite. Assume that $K$ has infinite transcendence degree over $\F_p$, then $\E$ is finite as well.
\end{corollary}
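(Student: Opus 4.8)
The plan is to combine the descent provided by Lemma~\ref{finito} with the base-change invariance of Lemma~\ref{basechange}, using the infinite transcendence degree of $K$ to reinsert the (finite-type) field of definition of $\E_L$ back inside $K$.

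First I would apply Lemma~\ref{finito} to the finite bundle $\E_L\in\Strat(X_L/L)$: this yields a subfield $M\subseteq L$ of finite type over $\F_p$ --- so $\trdeg_{\F_p}M<\infty$ --- together with a model $X'$ over $M$ and $\E'\in\Strat(X'/M)$ such that $X_L\cong X'\otimes_M L$ and $\E_L\cong\E'\otimes_M L$. Since $X$ is a variety it is already defined over a subfield $K_0\subseteq K$ of finite type over $\F_p$, and after enlarging $M$ I may assume $K_0\subseteq M$ and $X'=X_0\otimes_{K_0}M$ for a fixed $K_0$-model $X_0$ of $X$. Writing $\bar M$ for the algebraic closure of $M$ inside $L$, the trivializing cover of $\E_L$ descends to $M$ as well, and a faithfully flat descent of triviality shows that $\E'_{\bar M}:=\E'\otimes_M\bar M$ is finite over $\bar M$, a field of finite transcendence degree over $\F_p$.

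Now I would use the hypothesis $\trdeg_{\F_p}K=\infty$: since $\trdeg_{K_0}\bar M<\infty\le\trdeg_{K_0}K$, there is an embedding $\psi\colon\bar M\hookrightarrow K$ fixing $K_0$. Base-changing along $\psi$ and using $\psi|_{K_0}=\mathrm{id}$ identifies $X'_{\bar M}\otimes_{\bar M,\psi}K$ with $X_0\otimes_{K_0}K=X$, so $\E'':=\E'_{\bar M}\otimes_{\bar M,\psi}K$ is an object of $\Strat(X/K)$, and it is finite over $K$ by Lemma~\ref{basechange}. The crux is then to promote the abstract finiteness of $\E''$ to finiteness of $\E$ itself, i.e.\ to produce an honest $L$-linear isomorphism $\E\otimes_K L\cong\E''\otimes_K L$ rather than a mere isomorphism over some twist. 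For this I would first enlarge $L$ (legitimately, by Lemma~\ref{basechange}) and move $\bar M$ into general position by an automorphism fixing $K$, so that $\bar M$ and $K$ become free over $K_0$; then the pair $\mathrm{id}_K$ and $\psi$ extends to an automorphism $\rho$ of the large field fixing $K$ and restricting to $\psi$ on $\bar M$. Because $\E\otimes_K L$ is defined over the fixed field $K$ of $\rho$, transporting the isomorphism $\E\otimes_K L\cong\E'\otimes_M L$ by $\rho$ yields the desired $L$-linear identification $\E\otimes_K L\cong\E''\otimes_K L$.

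Finally I would conclude by descent of triviality. Let $h\colon Y\to X$ be a finite \'etale cover over $K$ trivializing $\E''$, which exists by Lemma~\ref{finite}. After base change to $L$ it trivializes $\E''\otimes_K L\cong\E\otimes_K L$, so $h^*\E\otimes_K L$ is trivial; since an element of $\Gamma(Y,h^*\E)\otimes_K L$ is a finite sum whose horizontality forces horizontality of each $K$-component, the formation of horizontal global sections commutes with the faithfully flat extension $K\hookrightarrow L$, and hence $h^*\E$ is already trivial over $K$. Thus $\E$ is isotrivial, so finite by Lemma~\ref{finite}. I expect the main obstacle to be precisely the matching step of the third paragraph: guaranteeing that the finite bundle produced by the abstract embedding $\psi$ is genuinely a form of $\E$ after extension of scalars, and not merely a finite bundle on an abstractly isomorphic variety --- this is exactly the point at which the freedom coming from the infinite transcendence degree of $K$ is indispensable.
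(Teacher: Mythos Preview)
Your strategy is essentially the paper's: exploit Lemma~\ref{finito} to descend to finite transcendence degree, then use $\trdeg_{\F_p}K=\infty$ together with Lemma~\ref{basechange}. The paper simply asserts that the field of definition $K'$ may be taken inside $K$ (so that $\E$ itself, not merely $\E_L$, descends to $K'$) and then applies Lemma~\ref{basechange} over the base $K'$; you instead keep $\bar M\subset L$ and attempt to match the re-embedded bundle $\E''$ with $\E$ via an automorphism of a large field. So the two approaches coincide in spirit, and your ``matching step'' is precisely what would justify the paper's assertion that one can take $K'\subset K$.

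However, your proposed justification of that step has a gap. You write that one can ``move $\bar M$ into general position by an automorphism fixing $K$, so that $\bar M$ and $K$ become free over $K_0$''. This is impossible in general: any automorphism of the ambient field fixing $K$ pointwise also fixes every element of $\bar M\cap K$, so the image of $\bar M$ still meets $K$ in at least $\bar M\cap K$. If $\bar M\cap K$ strictly contains $\overline{K_0}$ --- which can certainly happen, since the descent data for $\E_L$ produced by Lemma~\ref{finito} (the Picard--Vessiot torsor and the subquotient presentation) may well involve elements of $K$ beyond $K_0$ --- no automorphism fixing $K$ can make $\bar M$ and $K$ free over $\overline{K_0}$. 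The correct repair is not to move $\bar M$ but to enlarge the base of $\psi$: set $K_2:=\bar M\cap K$, which is algebraically closed of finite transcendence degree, so that $\trdeg_{K_2}K=\infty$ and one may choose $\psi\colon\bar M\hookrightarrow K$ with $\psi|_{K_2}=\mathrm{id}$. Because two algebraically closed subfields of an algebraically closed field are linearly disjoint over their intersection, the rule $k\cdot m\mapsto k\cdot\psi(m)$ is then a well-defined embedding of the compositum $K\bar M$ into $K$, and it extends to an embedding $\rho$ of $L$ into some large algebraically closed $\Omega$ with $\rho|_K=\mathrm{id}$ and $\rho|_{\bar M}=\psi$. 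Transporting the given isomorphism $\E\otimes_K L\cong\E'\otimes_{\bar M}L$ along $\rho$ yields $\E\otimes_K\Omega\cong\E''\otimes_K\Omega$, and finite-dimensionality of stratified Hom-spaces over the algebraically closed field $K$ gives $\E\cong\E''$ over $K$. With this correction your argument goes through, and your final descent-of-triviality paragraph becomes superfluous.
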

\begin{proof}
As $\E_L$ is finite then by Lemma~\ref{finito} there exists $K'\subset L$ algebraically closed of finite transcendence degree over $\F_p$ over which $\E_L$ and  is defined. But then $\E$ is defined over $K'$ as well, hence we can assume that $K'\subset K$. Let $h_{\E_L}:Y\to X_L$ be the Picard--Vessiot torsor of $\E_L$, then it is defined over some algebraically closed field $K''$ of finite transcendence degree over $K'$. In particular $\E_{K''}$ is finite and as $\infty=\trdeg_{K'}K\geq\trdeg_{K'}K''$ by Lemma~\ref{finite} $\E=\E_{K'}\otimes_{K'}K$ is finite.
\end{proof}

\section{Families of finite stratified bundles} We can view a relative stratified bundle $\E\in\Strat(X/S)$ as a family of stratified bundles parametrized by the points of $S$. In particular, for every $s\in S(K)$, let $\E_s\in\Strat(X_s/k(s))$ denote the restriction of $\E$ on $X_s$ and  $\E_\ebar\in\Strat(X_\ebar/k(\ebar))$ its restriction on the geometric generic fiber given by a choice of an algebraic closure $\overline{k(S)}$ of $k(S)$. It is natural then to ask how the property of being isotrivial behaves in families: the main question we want to study is whether it is true that if $\E_s$ is finite for every $s\in S(K)$ then so is $\E_\eta$.
Andr\'e proved in \cite[Prop.~7.1.1]{An:pcur} that the analogous result in characteristic zero holds. In positive characteristic, following an idea of Laszlo, in \cite[Cor.~4.3, Rmk.~5.4.1]{EL:pcur} the authors proved that there exists $X\to S$ a projective smooth morphism of varieties over $\bar{\F}_2$ and a stratified bundle on $X$ relative to $S$ which is finite on every closed fiber but not on the geometric generic one. Nevertheless, assuming $X$ to be projective over $S$ and imposing a coprimality to $p$ condition on the order of the monodromy group on the closed fibers, they proved the following:

\begin{theorem}\emph{\cite[Thm.~7.2]{EL:pcur}}\label{el} Let $X\to S$ be a  smooth \emph{projective} morphism of $K$-varieties with geometrically connected fibers and let $\E\in\Strat(X/S)$. Assume that there exists a dense subset $\tilde{S}\subset S(K)$ such that, for every $s\in \tilde{S}$, the stratified bundle $\E_s$ has finite monodromy \emph{of order prime to $p$}. Then
\begin{itemize}
\item[i)] there exists $f_\ebar:Y_\ebar\to X_\ebar$ a finite \'etale cover \emph{of order prime to $p$} such that $f^*\E_\ebar$ decomposes as direct sum of stratified line bundles;
\item[ii)] if $K\neq \bar{\F}_p$ then $\E_\ebar$ is trivialized by a finite \'etale cover \emph{of order prime to $p$}.
\end{itemize} 
\end{theorem}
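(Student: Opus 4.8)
The plan is to translate the fiberwise hypothesis into a statement about prime-to-$p$ étale fundamental groups and then to transport it across the family by a specialization argument. First I would record the dictionary underlying everything: over a proper smooth variety $X_0$ over an algebraically closed field of characteristic $p$, a finite stratified bundle whose monodromy has order prime to $p$ is the same datum as a finite-dimensional $K$-representation of the prime-to-$p$ quotient $\pi_1(X_0)^{(p')}$ of the étale fundamental group. Indeed, by Lemma~\ref{finite} such an $\E_0$ is trivialized by its Picard--Vessiot torsor, which is Galois of order prime to $p$; since $\E_0\in\langle h_*\I_{Y/K}\rangle_\otimes$ and the Tannakian group there is the constant finite group, the stratification on each subquotient is rigid, so $\E_0$ is recovered from the cover together with the induced representation of its Galois group. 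In particular the rank $r$ of $\E$ is fixed along the whole family, so for every $s\in\tilde S$ the monodromy group $\pi(\E_s)$ is a finite subgroup of $\GL_r(K)$ of order prime to $p$.

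Next I would invoke Grothendieck's specialization theorem (SGA1, Exp.~X) for the smooth proper family $X\to S$ with geometrically connected fibers: for each $s\in S(K)$ the specialization homomorphism induces an isomorphism $\pi_1(X_\ebar)^{(p')}\xrightarrow{\sim}\pi_1(X_s)^{(p')}$ on the prime-to-$p$ quotients, which, the fibers being proper, coincide with the tame quotients. This identifies, compatibly in $s$, the prime-to-$p$ finite étale covers of the generic geometric fiber with those of each closed fiber. It does \emph{not} yet identify the representations carried by $\E_s$; it only guarantees that any cover eventually produced on $X_\ebar$ has a well-defined meaning on the closed fibers, and conversely.

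The heart of the argument is to pass from pointwise finiteness to a single trivializing cover on $X_\ebar$, and this is where the prime-to-$p$ hypothesis does its real work through Jordan's theorem. Since every $\pi(\E_s)\subset\GL_r(K)$ has order prime to $p$, by Jordan's theorem it contains an abelian normal subgroup of index bounded by a constant $J(r)$ depending only on $r$. I would use this uniform bound, together with a spreading-out and constructibility argument in the family, to build a single prime-to-$p$ relative finite étale cover $f:\mathcal Y\to X$ of bounded degree such that, for all $s$ in a dense subset, $f^*\E_s$ has abelian monodromy, hence decomposes as a direct sum of stratified line bundles of order prime to $p$. This reduces the problem to the rank-one case, where the monodromy is governed by the prime-to-$p$ abelianization $\pi_1(X_\ebar)^{(p'),\mathrm{ab}}$, which by the previous step is independent of $s$; restricting $f^*\E$ to $X_\ebar$ and using the specialization isomorphism then yields conclusion (i), namely that $\E_\ebar$ splits into stratified line bundles after pullback. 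For conclusion (ii) one must moreover trivialize these residual characters by a single cover, i.e.\ bound their orders; I would obtain this bound from a constructibility argument on $S$ valid precisely when $K\neq\bar{\F}_p$, the point being that over a field of positive transcendence degree the generic order of a relatively finite stratified line bundle is controlled by its closed-fiber orders, whereas over $\bar{\F}_p$ these orders may grow without bound -- which is exactly the phenomenon behind the Esnault--Langer counterexample and the reason case (ii) excludes $\bar{\F}_p$.

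The step I expect to be the main obstacle is the production of the uniform cover: the specialization theorem aligns the fundamental groups but says nothing about how the representation attached to $\E_s$ varies with $s$, so one cannot simply take a limit. The real content is the boundedness and constructibility of the family of monodromy groups $\pi(\E_s)$ as $s$ ranges over $\tilde S$, together with the descent of a generic trivialization down to $X_\ebar$ via the Picard--Vessiot formalism of Lemma~\ref{finite}. The prime-to-$p$ hypothesis, through Jordan's theorem, is what makes the non-abelian part uniformly bounded, and the dichotomy $K\neq\bar{\F}_p$ versus $K=\bar{\F}_p$ is precisely what decides whether the remaining abelian part can also be bounded, and hence whether one obtains a full trivialization or only a decomposition into line bundles.
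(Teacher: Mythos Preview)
The paper does not supply its own proof of Theorem~\ref{el}: the result is quoted verbatim from \cite[Thm.~7.2]{EL:pcur} and used as a black box, so there is no in-paper argument to compare your proposal against.

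That said, your outline is broadly faithful to the Esnault--Langer strategy and also mirrors the architecture of this paper's Theorem~\ref{eldiv}, which reduces to Theorem~\ref{el}: a Jordan-type bound (here Brauer--Feit, since one allows a bounded $p$-part) to uniformly control the non-abelian part of the fiberwise monodromy, then a reduction to the abelian case where the dichotomy $K\neq\bar\F_p$ versus $K=\bar\F_p$ appears. The step you correctly flag as the main obstacle---producing a single relative finite \'etale cover dominating all the fiberwise covers of bounded degree---is precisely what Lemma~\ref{dominate} of this paper establishes: since $X_\ebar$ is proper, $\pi_1^{\text{\'et}}(X_\ebar)$ is topologically finitely generated and hence has only finitely many open subgroups of bounded index; their intersection is then spread over $S$ via the homotopy exact sequence split by a section $\sigma:S\to X$. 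So the mechanism you were looking for is already in the paper, just deployed for the generalization rather than for Theorem~\ref{el} itself.

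One caution on your sketch of part~(ii): the phrase ``constructibility argument on $S$ valid precisely when $K\neq\bar\F_p$'' is doing a lot of work without saying what it is. The actual obstruction over $\bar\F_p$ is not a failure of constructibility but the fact that on an abelian variety over $\bar\F_p$ every torsion line bundle carries a stratification, yet there is no uniform bound on the torsion order across a family; over larger $K$ one gets the bound from the structure of the relative Picard scheme. Your sketch would need to name this input to become a proof.
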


Note that the cover of order prime to $p$ in the second point of the theorem factors through the Picard--Vessiot torsor of $\E_\ebar$ by its minimality (see Lemma \ref{finite}). In particular, this implies that the order of the monodromy group of $\E_\ebar$ is prime to $p$.

This article is devoted to determine how the assumptions of $X$ being projective over $S$ and of the order of the monodromy groups to be prime to $p$ can or cannot be relaxed in order to get similar results.
A first strengthening of the theorem comes rather directly from the ideas in the proof of Theorem \ref{el}. In order to prove it we need first to establish the following 

\begin{lemma}\label{dominate} 
Let $h:X\to S$ be a proper flat separable morphism of connected varieties  with geometrically connected fibers over an algebraically closed field $K$ and suppose it has a section $\sigma:S\to X$. Let $\tilde{S}\subset S(K)$ be any subset of the closed points of $S$, let $N\in\N$ and let us fix for every $s\in \tilde{S}$ a finite \'etale cover $g_s:Z_s\to X_s$ of degree less than $N$. Then there exists an open subscheme $\U\subset S$ and a finite \'etale cover $f:W\to X\times_S\U$ dominating all the $g_s$ for $s\in\tilde{S}\cap\U$; that is, for every $s\in\tilde{S}\cap\U$ the finite \'etale cover $f_s:W_s\to X_s$ factors through $g_s:Z_s\to X_s$.
\end{lemma}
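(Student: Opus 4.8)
The plan is to control the finite \'etale covers of the fibres through their \'etale fundamental groups, combining the topological finite generation of $\pi_1$ of a proper variety with Grothendieck's specialization theory. I would begin with two reductions. Since $h$ is separable, it is smooth over a dense open subscheme of $S$; replacing $S$ by this open (and absorbing it into the final $\U$) I may assume $h\colon X\to S$ is proper \emph{and} smooth with geometrically connected fibres, which is exactly the setting of SGA1, Exp.~X. The section $\sigma$ is then used to fix compatible geometric base points in all the fibres, so that the groups $\pi_1(X_\ebar)$, $\pi_1(X_s)$ and the specialization maps between them are defined without ambiguity.

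The first main input is finiteness: because $X_\ebar$ is proper over the algebraically closed field $k(\ebar)$, the group $\pi_1(X_\ebar)$ is topologically finitely generated (a theorem of Grothendieck), so it has only finitely many open subgroups of index $<N$, and hence $X_\ebar$ has only finitely many finite \'etale covers of degree $<N$ up to isomorphism. I would let $C\to X_\ebar$ be the Galois cover attached to the open normal subgroup $U_N:=\bigcap H$, the intersection running over all open $H\subset\pi_1(X_\ebar)$ of index $<N$; by construction $C$ dominates every cover of $X_\ebar$ of degree $<N$. The second input is the specialization theorem of SGA1, Exp.~X: for the proper smooth family the specialization homomorphism $\mathrm{sp}\colon\pi_1(X_\ebar)\to\pi_1(X_s)$ is surjective for every closed point $s$ (one reduces to a strictly henselian trait $T$ through $s$ with generic point over $\eta$, where finite \'etale covers of the total space restrict isomorphically to those of the special fibre and surject onto those of the generic one). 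Surjectivity transfers bounded-degree covers from special to generic fibre: a connected $g_s\colon Z_s\to X_s$ of degree $d<N$ corresponds to an open $H_s\subset\pi_1(X_s)$ of index $d$, and $\mathrm{sp}^{-1}(H_s)$ again has index $d<N$ in $\pi_1(X_\ebar)$, hence contains $U_N$.

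Finally I would spread $C$ out. As a finitely presented object over $X_\ebar=\varprojlim_L X_L$, the cover $C$ is already defined over $X_L$ for some finite separable extension $L/k(S)$; spreading out over a dense open $\U_L$ of the normalization $S_L$ of $S$ in $L$ yields a finite \'etale $W_L\to X\times_S\U_L$ with geometric generic fibre $C$. Shrinking so that $S_L\to S$ is finite \'etale over a dense open $\U\subset S$, the composite $W:=\bigl(W_L\to X\times_S\U_L\to X\times_S\U\bigr)$ is finite \'etale; this is the cover asserted by the lemma. For $s\in\tilde S\cap\U$ one has $W_s=\bigsqcup_{s'\mapsto s}W_{s'}$, and in the trait picture above each component $W_{s'}\to X_s$, being the specialization of $C$, corresponds to $\mathrm{sp}(U_N)\subset\pi_1(X_s)$. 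Since $U_N\subseteq\mathrm{sp}^{-1}(H_s)$ gives $\mathrm{sp}(U_N)\subseteq H_s$, the finer cover $W_{s'}$ factors through $g_s$, and therefore so does the disjoint union $W_s$.

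The step I expect to be the main obstacle is the precise bookkeeping of specialization of covers in the third paragraph: matching the open subgroup of $\pi_1(X_s)$ cut out by the specialized cover $W_s$ with $\mathrm{sp}(U_N)$, which rests on the fact that a cover of the total space restricts \emph{compatibly} to both fibres through the surjective map $\mathrm{sp}$. The descent of $C$ over a finite separable extension $L/k(S)$ is routine but must be handled with care, since it forces the convenient (and harmless) passage to a possibly disconnected total cover $W$.
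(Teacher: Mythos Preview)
Your approach is correct and follows the same broad strategy as the paper: use topological finite generation of $\pi_1^{\text{\'et}}(X_\ebar)$ to build a single cover $C$ of the geometric generic fibre dominating all covers of bounded degree, descend it to a finite separable extension of $k(S)$, and then compare with the closed fibres via specialization. Where you diverge is in how the global cover $W$ is produced and how the comparison on closed fibres is carried out.

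The paper, after passing to an \'etale $S'\to S$ so that $C$ descends, uses the section $\sigma'$ to split the homotopy exact sequence
\[
1\to \pi_1^{\text{\'et}}(X_\ebar)/K'\to \pi_1^{\text{\'et}}(X')\to \pi_1^{\text{\'et}}(S')\to 1
\]
and builds the open subgroup $H=\overline{G}\rtimes\sigma'_*(\pi_1^{\text{\'et}}(S'))$, so that the corresponding cover $W\to X'$ has \emph{geometrically connected fibres} over $S'$. This is the point of the semidirect product: it buys surjectivity of the specialization map on $W$, which is then fed into a commutative square to conclude that $W_{s'}\to X_{s'}$ factors through $g_{s'}$.

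You instead spread out $C$ na\"{\i}vely to $W_L\to X_{\U_L}$ and argue directly over a strictly henselian $T$ at $s'$: the equivalence $\text{F\'et}(X_T)\simeq\text{F\'et}(X_{s'})$ identifies $W_L|_T$ with its special fibre $W_{L,s'}$, while the geometric generic fibre is $C$; since the corresponding $\pi_1^{\text{\'et}}(X_\ebar)$-set is transitive and $\mathrm{sp}$ is surjective, $W_{L,s'}$ is connected with stabilizer $H'$ satisfying $\mathrm{sp}^{-1}(H')=U_N$, whence $H'=\mathrm{sp}(U_N)\subset H_s$. This is exactly the ``bookkeeping'' you flagged, and it does go through; the one point worth making explicit is that the generic point of the strict henselization $T$ of $\U_L$ at $s'$ really maps to the generic point of $\U_L$, so that the geometric generic fibre of $W_L|_T$ is indeed $C$. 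Your route thus avoids the splitting argument altogether and uses the section only to rigidify base points, which also vindicates the paper's parenthetical remark that the section is not essential. The trade-off is that the paper's construction yields a $W$ with connected fibres (conceptually cleaner, and the specialization argument for $W$ is then a one-liner), whereas your $W$ is a disjoint union over the points $s'$ lying over $s$; but since the statement only asks for a factorization, this is harmless.
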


\begin{proof}The proof of this lemma is a generalization of the construction that one can find in the beginning of the proof of \cite[Thm.~5.1]{EL:pcur}.

First notice that if the order of the $g_s:Z_s\to X_s$ is bounded by $N$ then the order of their Galois closures is bounded by $N!$, hence we can assume all the $g_s:Z_s\to X_s$ to be Galois. Moreover if $S'$ is connected and  $S'\to S$ is \'etale and generically finite then there is a non-trivial open $\U$ over which $S'\times_S\U\to \U$ is finite and \'etale. As $X\to S$ is smooth its image is open; hence, by shrinking $S$, we can assume that $X\to S$ is surjective.

Let $S'\to S$ be finite \'etale, then so is $X'=X\times_SS'\to X$. Let  $s\in \tilde{S}$ and $s'\in S'(K)$ lying over $s$. Assume that we have found $f':W\to X'$ such that $f'_s:W_s\to X_s'$ factors through $g_{s'}:Z_{s'}=Z_s\times_{k(s)}k(s')\to X'_{s'}$ then the composition $f:W\to X'\to X$ is a finite \'etale cover of $X$ and $f_s:W_s\to X_s$ factors through $g_s$.

As $h:X\to S$ has geometrically connected fibers so does $h':X'\to S'$ as if $s'\in S$ lies over $s\in S$ then $X'_{s'}=X_s\otimes_{k(s)}k(s')$. Therefore, $h':X'\to S'$ is proper, flat, separable and has geometrically connected fibers. So if $S'$ is connected the morphism $h':X'\to S'$ together with the section $\sigma':S'\to X'$ induced by $\sigma:S\to X$ satisfy the assumptions of the theorem.

\

For any $s\in \tilde{S}$ let $G_s\subset\pi_1^\text{\'et}(X_s,\sigma(s))$ be the open normal subgroup corresponding via Galois duality to the cover $g_s:Z_s\to X_s$.
Let $\ebar$ be a generic geometric point of $S$ given by the choice of an algebraic closure $\overline{k(S)}$ of $k(S)$. The fibers of $X\to S$ are geometrically connected and the morphism is proper, flat and separable; hence, the specialization map $\pi_1^\text{\'et}(X_\ebar,\sigma(\ebar))\onto \pi_1^\text{\'et}(X_s,\sigma(s))$ is surjective. Composing it with the quotient of $\pi_1^\text{\'et}(X_s,\sigma(s))$ by $G_s$ we get
\[\rho_s:\pi_1^\text{\'et}(X_\ebar,\sigma(\ebar))\onto \pi_1^\text{\'et}(X_s,\sigma(s))\onto \pi_1^\text{\'et}(X_s,\sigma(s))/G_s.\]
Notice that the index of $\ker(\rho_s)$ in $\pi_1^\text{\'et}(X_\ebar,\sigma(\ebar))$ is bounded by $N$. Let $\tau:S'\to S$ be any finite \'etale cover and let $s'\in S$ lying over $s$. As $K$ is algebraically closed, then $k(s')\simeq k(s)$ and hence $X'_{s'}\simeq X_s$. In particular, the natural morphism $\pi_1^\text{\'et}(X'_{s'},\sigma'(s'))\to\pi_1^\text{\'et}(X_s,\sigma(s))$ is an isomorphism. Let $G_{s'}\subset \pi_1^\text{\'et}(X'_{s'},\sigma'(s')) $ be the open subgroup corresponding to $g_{s'}:Z_{s'}\to X'_{s'}$, that is, the preimage of $G_s$ under this isomorphism and let us denote by
\[\rho_{s'}:\pi_1^\text{\'et}(X_\ebar,\sigma(\ebar))\onto \pi_1^\text{\'et}(X'_{s'},\sigma'(s'))\onto \pi_1^\text{\'et}(X'_{s'},\sigma'(s'))/G_{s'},\]

then $\ker(\rho_{s'})=\ker(\rho_s)\subset\pi_1^\text{\'et}(X_\ebar,\sigma(\ebar))$.

As $X_\ebar$ is a  projective $\overline{k(S)}$-variety, then $\pi_1^\text{\'et}(X_\ebar,\sigma(\ebar))$ is topologically finitely generated and hence has finitely many subgroups of index less than $N$, which are all opens by Nikolov--Segal theorem (\cite[Thm~1.1]{ns}), the intersection of which we denote by $G$: It is a normal open subgroup and it has finite index. Moreover 
\[ G\subset\bigcap_{s\in \tilde{S}}\ker(\rho_s)=\bigcap_{s'\in\tau^{-1}(\tilde{S})}\ker(\rho_{s'}).\]

At this point, we need an additional step before concluding similarly than in the proof of \cite[Thm.~5.1]{EL:pcur}. By Galois duality $G$ corresponds to a finite \'etale cover $Z_\ebar\to X_\ebar$. Let $k(S)^\text{sep}$ be the separable closure of $k(S)$ in $\overline{k(S)}$. The base change functor from the category of finite \'etale covers over $X\otimes_{S}k(S)^\text{sep}$ to the one of finite \'etale covers over $X_\ebar$ is an equivalence.  Hence,  $Z_\ebar$ is defined over some separable extension of $k(S)$. In particular, there exists an \'etale generically finite cover $S'\to S$ such that $Z_\ebar$ descends to a finite \'etale cover of $X'=X\times_SS'$.

 Let $\ebar'$ be the geometric generic point of $S'$ given by  $k(S)\subset k(S') \subset \overline{k(S)}$. Then $X'_{\ebar'}=X_\ebar$ and $\sigma(\ebar)=\sigma'(\ebar')$. Hence, the following diagram commutes:
\[\xymatrix{
\pi_1^\text{\'et}(X'_\ebar,\sigma'(\ebar'))\ar[r]\ar@{=}[d]& \pi_1^\text{\'et}(X',\sigma'(\ebar'))\ar[d]\\
\pi_1^\text{\'et}(X_\ebar,\sigma(\ebar))\ar[r]&\pi_1^\text{\'et}(X,\sigma(\ebar)).
}\]
Let $K'$ be the kernel of $\pi_1^\text{\'et}(X_\ebar,\sigma(\ebar))\to\pi_1^\text{\'et}(X',\sigma'(\ebar'))$. As $X\to S$ is projective we have the following exact sequence:
\[\xymatrix{&\pi_1^\text{\'et}(X_\ebar,\sigma(\ebar))\ar[d]^{q}\ar[dr]^\alpha\\
\{1\}\ar[r]&\pi_1^\text{\'et}(X_\ebar,\sigma(\ebar))/K'\ar[r]^i &\pi_1^\text{\'et}(X',\sigma'(\ebar'))\ar[r]&\pi_1^\text{\'et}(S',\ebar')\ar[r]\ar@/_.5pc/[l]_(.45){\sigma'_*}&\{1\}.
}\]

By \cite[V Cor 6.7]{SGA1} we have the inclusion $G\supset K'$. Moreover if we denote by $\Pi_{K'}=\pi_1^\text{\'et}(X_\ebar,\sigma(\ebar))/K'$, then the section $\sigma'_*$ induces a splitting
\[\pi_1^\text{\'et}(X',\sigma'(\ebar'))\simeq \Pi_{K'}\rtimes \sigma'_*(\pi_1^\text{\'et}(S',\ebar'))\]
as abstract groups. It is also a splitting of topological groups (see for example \cite[§2.10 Prop.~28]{Bour} and following discussion). In particular, the topology on $\pi_1^\text{\'et}(X',\sigma'(\ebar'))$ is the product topology.
Note that $\overline{G}=q(G)$ is invariant by the action of $\sigma'_*(\pi_1^\text{\'et}(S',\ebar'))$; hence, we can define
\[H=\overline{G}\rtimes \sigma'_*(\pi_1^\text{\'et}(S',\ebar')).\]
By definition $\alpha^{-1}(H)=G$, and $H$ has finite index in  $\pi_1^\text{\'et}(X',\sigma'(\ebar'))$. It is also open: $\pi_1^\text{\'et}(X',\sigma'(\ebar'))$ is endowed with the product topology, and $\overline{G}$ is open because $G=q^{-1}(\overline{G})$ is open as well. Hence, $H$ corresponds to a finite \'etale cover $W\to X'$ and as the composition $H\subset\pi_1^\text{\'et}(X',\sigma'(\ebar'))\onto\pi_1^\text{\'et}(S',\ebar')$ is surjective, then $W$ has geometrically connected fibers over $S'$. In particular, the specialization map is again surjective.
Let $z\in W$ be a closed point lying over $s'$ and let $\bar{\zeta}$ be a geometric generic point lying over $\sigma(\ebar)$, then we have the following commutative diagram

\[\xymatrix{\pi_1^\text{\'et}(W_\ebar,\bar{\zeta})\ar@/^1pc/[rr]^0\ar[r]\ar@{->>}[d]&\pi_1^\text{\'et}(X_\ebar,\sigma(\ebar))\ar[r]\ar@{->>}[d]&\pi_1^\text{\'et}(X_\ebar,\sigma(\ebar))/G\ar[d]
\\
\pi_1^\text{\'et}(W_s,z)\ar[r]&\pi_1^\text{\'et}(X'_{s'},\sigma'(s'))\ar[r]&\pi_1^\text{\'et}(X'_{s'},\sigma'(s'))/G_{s'}.
}
\]

Using the surjectivity of the specialization map on $W$ it follows that the composition of the morphisms on the second line is zero as well; hence, if $\tilde{G}_{s'}\subset \pi_1^\text{\'et}(X'_{s'},\sigma'(s'))$ is the open normal subgroup corresponding to $f_{s'}:W_{s'}\to X'_{s'}$ then $\tilde{G}_s\subset G_{s'}$. Therefore, $f_{s'}$ factors through $g_{s'}$.
\end{proof}

We can summarize the previous lemma by saying that with the assumptions of the theorem, up to shrinking $S$ every family of finite \'etale covers of the closed fibers with bounded order can be dominated by a finite \'etale cover of $X$ (notice that the existence of the section $\sigma:X\to S$ is not essential for the proof).

\begin{theorem}\label{eldiv}
Let $X\to S$ be a smooth \emph{proper}  morphism of $K$-varieties with geometrically connected fibers and $\E\in\Strat(X/S)$ of rank $r$. Assume that there exists a dense subset $\tilde{S}\subset S(K)$ such that, for every $s\in \tilde{S}$, the stratified bundle $\E_s$ has finite monodromy and that the highest power of $p$ dividing $|\pi(\E_s)|$ is bounded over $\tilde{S}$. Then
\begin{itemize}
\item[i)] there exists $f_\ebar:Y_\ebar\to X_\ebar$ a finite \'etale cover such that $f^*\E_\ebar$ decomposes as direct sum of stratified line bundles;
\item[ii)] if $K\neq \bar{\F}_p$ then $\E_\ebar$ is finite.
\end{itemize} 
\end{theorem}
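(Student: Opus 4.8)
The plan is to reduce to Esnault and Langer's Theorem~\ref{el} by pulling $\E$ back, over a suitable open of $S$, along a finite étale cover of $X$ that makes the monodromy on the closed fibers of order prime to $p$, and then to transfer conclusions (i) and (ii) back down along this cover. The crucial point, and where the hypothesis that $\E$ has a \emph{fixed} rank $r$ enters, is that such a cover can be chosen of \emph{uniformly bounded degree}, so that Lemma~\ref{dominate} applies.

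The key group-theoretic input I would isolate is the following: there is a constant $C=C(r,N)$, depending only on $r$ and $N$, such that every finite subgroup $G\subset\GL_r(K)$ with $p^N\nmid|G|$ contains a subgroup $H$ of order prime to $p$ with $[G:H]\leq C$. I would deduce this from the generalized Jordan theorem of Larsen and Pink: $G$ has a normal subgroup of index bounded in terms of $r$ which is built from factors of Lie type in characteristic $p$, an abelian group of order prime to $p$, and a $p$-group. The bound on the $p$-part rules out all but boundedly many of the characteristic-$p$ Lie-type factors (each has $p$-part growing with its field of definition and rank), so after passing to a further subgroup of index bounded by $C(r,N)$ one is reduced to a $p$-solvable group, in which a Hall $p'$-subgroup $H$ of index at most $p^N$ exists by Schur--Zassenhaus.

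With this in hand I proceed as follows. For each $s\in\tilde S$ choose a rational point $x_s\in X_s(K)$; the fiber functor $\omega_{x_s}$ realizes $\pi(\E_s)$ as a finite subgroup of $\GL_r(K)$ whose $p$-part is bounded by $p^N$, so I may pick $H_s\subset\pi(\E_s)$ of order prime to $p$ with $[\pi(\E_s):H_s]\leq C(r,N)$. Let $g_s:Z_s\to X_s$ be the connected finite étale cover lying between the Picard--Vessiot torsor $Y_s\to X_s$ of $\E_s$ (Lemma~\ref{finite}) and $X_s$ that corresponds to $H_s$; its degree is $[\pi(\E_s):H_s]\leq C(r,N)$, uniformly bounded. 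Lemma~\ref{dominate} then produces an open $\U\subset S$ (after a generically finite étale base change on $S$, which does not change the geometric generic fiber, and using that the section hypothesis there is inessential) and a finite étale cover $f:W\to X\times_S\U$, with $W$ smooth and proper with geometrically connected fibers over $\U$, such that $f_s:W_s\to X_s$ factors through $g_s$ for every $s$ in the dense set $\tilde S\cap\U$. For such $s$ the factorization $W_s\to Z_s\to X_s$ shows that the monodromy of $(f^*\E)_s=f_s^*\E_s$ is contained in $H_s$, hence has order prime to $p$.

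Thus $f^*\E\in\Strat(W/\U)$ satisfies the hypotheses of Theorem~\ref{el}. For (i) it yields a finite étale cover $q_\ebar:T_\ebar\to W_\ebar$ after which $q_\ebar^*f_\ebar^*\E_\ebar$ is a direct sum of stratified line bundles, and the composite $f_\ebar\circ q_\ebar:T_\ebar\to X_\ebar$ is the cover required in (i) (it need no longer have degree prime to $p$, which is consistent with the statement). For (ii), if $K\neq\bar{\F}_p$ Theorem~\ref{el} gives that $f_\ebar^*\E_\ebar$ is finite, hence isotrivial by Lemma~\ref{finite}; composing a cover that trivializes it with $f_\ebar$ exhibits a finite étale cover of $X_\ebar$ trivializing $\E_\ebar$, so $\E_\ebar$ is isotrivial and therefore finite, again by Lemma~\ref{finite}. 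I expect the main obstacle to be the uniform group-theoretic bound of the second paragraph: without the fixed rank $r$ there is no uniform bound on the index of a prime-to-$p$ subgroup, since large simple groups of Lie type in characteristic $p$ with small $p$-part would intervene, and it is precisely the embedding into $\GL_r$ together with the bounded $p$-part that excludes them and makes Lemma~\ref{dominate} applicable; the remaining technical points are checking that the intermediate cover $g_s$ of the Picard--Vessiot torsor has pulled-back monodromy inside $H_s$, and that the geometric-connectedness and section requirements of Lemma~\ref{dominate} are met.
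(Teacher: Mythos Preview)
Your strategy is the paper's: reduce to Theorem~\ref{el} by combining a Jordan-type bound with Lemma~\ref{dominate}. The paper invokes the Brauer--Feit extension of Jordan's theorem to find a normal abelian $A_s\trianglelefteq\Gamma_s=\pi(\E_s)$ of index bounded by $f(r,N)$, applies Lemma~\ref{dominate} once to make the closed-fiber monodromy abelian, then splits the abelian group as its $p$-part (of order $\le p^N$) times its $p'$-part and applies Lemma~\ref{dominate} a second time to kill the $p$-part. You collapse the two passes into one by directly producing a prime-to-$p$ subgroup $H_s$ of bounded index; this is a mild streamlining of the same argument. Note that you do not need Larsen--Pink for it: Brauer--Feit already gives your bound, since the $p'$-part $A_{s,p'}$ of the abelian $A_s$ has index $[\Gamma_s:A_s]\cdot|A_{s,p}|\le f(r,N)\cdot p^N$ in $\Gamma_s$ and order prime to $p$.

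One step is missing. Theorem~\ref{el} requires the morphism to be \emph{projective}, whereas your $W\to\U$ is only proper (finite étale over a proper base). The paper handles this at the outset by replacing $X$ with a projective alteration via Chow's lemma and de Jong, using invariance of the monodromy group under generically finite étale pullback; you should insert the same reduction before invoking Theorem~\ref{el}.
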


\begin{proof}We will reduce this theorem to Theorem \ref{el}.
By the invariance of the monodromy group it suffices to prove the theorem for  $f^*\E$ where $f:Y\to X$ is a morphism of smooth $S$-varieties which is generically finite \'etale. By Chow's lemma and using de Jong alterations (\cite{dJ}) there exists $f:Y\to X$ projective and generically finite \'etale, hence we can assume $X$ to be projective.
Up to taking an \'etale open of $S$ we can assume that there exists a section $\sigma:S\to X$. For any $s\in \tilde{S}$ let $\Gamma_s=\pi(\E,\sigma(s))$ and $h_s:Y_s\to X_s$ the Picard--Vessiot torsor of $\E_s$ (see Lemma \ref{finite}). Let $G_s\subset \pi_1^\text{\'et}(X_s,\sigma(s))$ be the normal open subgroup corresponding via Galois duality to the cover $h_s$. By Tannakian duality $\E_s$ corresponds to the image of an $r$-dimensional representation of $\pi_1^{\Strat}(X_s,\sigma(s))$ (\cite[Prop.~2.21]{DM}) and as $\E_s$ is finite by \cite[Prop.~13]{dS:fun} this representation factors through the \'etale fundamental group, considered as a constant group scheme
\[\pi^{\Strat}_1(X_s,\sigma(s))\onto\pi^\text{\'et}_1(X_s,\sigma(s))\onto \pi^\text{\'et}_1(X_s,\sigma(s))\big/ G_s=\Gamma_s\subset GL_r(K)\]
where $r$ is the rank of $\E$. By Brauer--Feit generalization of Jordan's theorem \cite[Theorem]{bf}, as the orders of the Sylow-p-subgroups of every $G_s$ are bounded by $p^N$, there exists an integer $M=f(r,N)$ and, for every $s\in \tilde{S}$, a normal abelian subgroup $A_s$ such that $|\Gamma_s:A_s|<M$. This gives for every $s\in S(K)$ a Galois cover $g_s:Z_s\to X_s$ of order bounded by $M$ and a factorization 
\[ Y_s\to Z_s \to X_s\]
where $Y_s\to Z_s$ is Galois of Galois group $A_s$. Therefore, by Lemma \ref{dominate}, up to shrinking $S$ there exists a cover $g':Z'\to X$ such that $g'_s:Z'_s\to X_s$ factors through $g_s:Z_s\to X_s$. In particular, if $\E'$ is the pullback of $\E$ via $g'$ then $\pi(\E'_s)$ is abelian for every $s$.
Up to taking an \'etale open of $S$ the section $\sigma:S\to X$ extends to a section $\sigma':S\to Z'$. Let $\Gamma'_s=\pi(\E',\sigma'(s))$, as we just noticed for every $s\in \tilde{S}$ we have that $\Gamma'_s$ is abelian; hence, we can write it as the direct product of its $p$ part with its prime to $p$ part:
\[\Gamma'_s=\Gamma^p_s\times\Gamma^{p'}_s\]
and $\Gamma^{p'}_s$ corresponds to a Galois cover over $Z'_s$ whose index is by assumption bounded by $p^N$ for some $N\in\N$. Applying Lemma \ref{dominate} and up to shrinking $S$ we get a Galois cover $g'':Z''\to Z'$ dominating all such covers. Let $\E''$ be the pullback of $\E'$ along $g''$, then $\pi(\E''_s)$ is (abelian) of order prime to $p$ for every $s\in \tilde{S}$. Therefore, we have reduced the problem to Theorem \ref{el}.
\end{proof}

\section{A counterexample over countable fields}\label{esempio}

Our next aim is to drop the assumption of  $X$ being projective over $S$. However, before getting to the positive results, let us present a counterexample to understand what we can reasonably expect to hold without this assumption. Assume for the rest of this section $K$ to be an algebraically closed countable field. Let $X=\A^2_K$, $S=\A^1_K$ and let $X\to S$ be given by $K[y]\to K[x,y]$. The main result of this section is the following:
\begin{proposition}\label{ex}
There exists $\E\in\Strat(X/S)$ such that $\E_s$ is trivial for every point $s\in S(K)$ but $\E_\ebar$ is not isotrivial.
\end{proposition}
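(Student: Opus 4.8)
The plan is to build $\E$ as a rank-two unipotent extension whose ``Frobenius depth'' is forced to grow, and to use the countability of $K$ to switch this depth off one closed fibre at a time. Since $\operatorname{Pic}(\A^2_K)=0$ I take the underlying bundle to be $\Oh_X e_1\oplus\Oh_X e_2$, with $x$ the relative coordinate of $X=\Spec K[x,y]\to S=\Spec K[y]$. Enumerate the countably many closed points $S(K)=K=\{s_1,s_2,\dots\}$ and put $c_m(y)=\prod_{i=1}^m(y-s_i)\in K[y]$ (so $c_0=1$ and $\deg_y c_m=m$). I define the $\D_{X/S}$-action by $\de_x^{(k)}(e_1)=0$ for all $k\geq1$, by $\de_x^{(p^m)}(e_2)=c_m(y)\,e_1$ for $m\geq0$, and by $\de_x^{(k)}(e_2)=0$ for every $k\geq1$ that is not a power of $p$; the Leibniz rule \eqref{product} and the $\Oh_S$-linearity of the $\de_x^{(k)}$ then determine the action on all sections. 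Heuristically this is the extension $0\to\I_{X/S}\to\E\to\I_{X/S}\to0$ obtained by twisting the trivial bundle by the formal function $f=\sum_{m\geq0}c_m(y)x^{p^m}$, whose relative Hasse derivatives $\de_x^{(p^m)}(f)=c_m(y)$ all lie in $K[x,y]$.

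First I would check that $\E$ is a genuine object of $\Strat(X/S)$, i.e.\ that $\de_x^{(i)}\de_x^{(j)}=\binom{i+j}{i}\de_x^{(i+j)}$ holds on $\E$. Applied to $e_2$ the left-hand side vanishes for $i,j\geq1$, because the structure constants depend only on $y$ and so are killed by $\de_x^{(i)}$; the right-hand side is $\binom{i+j}{i}\de_x^{(i+j)}(e_2)$, which is nonzero only when $i+j=p^{m'}$, and then $0<i<p^{m'}$ forces $\binom{p^{m'}}{i}\equiv0\pmod p$ by Kummer's theorem, so both sides agree. Next, the triviality of each closed fibre is immediate: over $s=s_j$ one has $c_m(s_j)=0$ for all $m\geq j$, so $\E_{s_j}$ is governed by the honest polynomial $g=\sum_{m<j}c_m(s_j)x^{p^m}\in K[x]$, and $e_1$ together with $e_2-g\,e_1$ forms a horizontal frame; hence $\E_{s_j}\cong\I_{X_{s_j}/K}^{\oplus2}$. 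As every closed point of $S$ occurs among the $s_j$, this settles triviality on all of $S(K)$.

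The heart of the argument — and the step I expect to be the main obstacle — is to show that $\E_\ebar$ is not isotrivial. Write $\kappa=\overline{k(S)}$. If $\E_\ebar$ were trivialized by some finite \'etale cover $h\colon Y\to\A^1_\kappa$, the pulled-back extension would split, so $f$ would differ by a constant from a regular function $\tilde g\in\Oh_Y$; as $\Oh_Y$ is finite over $\kappa[x]$, this would make $f$ algebraic over $\kappa(x)$. It thus suffices to prove that $f=\sum_m c_m(y)x^{p^m}$ is transcendental over $\kappa(x)$, and this is where the choice of the $c_m$ is used. Assume $f$ is algebraic. Each $f^{p^i}=\sum_m c_m(y)^{p^i}x^{p^{m+i}}$ lies in the finite extension $\kappa(x)(f)$, so $f,f^p,\dots,f^{p^{d'}}$ (with $d'=[\kappa(x)(f):\kappa(x)]$) are $\kappa(x)$-linearly dependent, giving an identity $\sum_{i=0}^{d'}\lambda_i(x)f^{p^i}=0$ in $\kappa((x))$ with $\lambda_i\in\kappa[x]$ not all zero; dividing through by the common power of $x$ I may assume the constant terms $a_i:=\lambda_i(0)$ are not all zero. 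Extracting the coefficient of $x^{p^M}$ for $M$ large, only the contributions with $m+i=M$ survive (because $f^{p^i}$ has only pure $p$-power exponents), yielding the Frobenius-linear recurrence $\sum_{i=0}^{d}a_i\,c_{M-i}(y)^{p^i}=0$ with $a_d\neq0$; since no $c_m$ vanishes, necessarily $d\geq1$. Now take the valuation $v$ on $\kappa$ extending the pole order at $y=\infty$: then $v(c_m)=-m$, so the term $i=d$ has $v$-value $v(a_d)-p^d(M-d)$, which for large $M$ is strictly smaller than $v(a_i)-p^i(M-i)$ for every $i<d$. The sum therefore has a single dominant term and cannot be zero — a contradiction. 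Hence $f$ is transcendental and $\E_\ebar$ is not isotrivial; the same unbounded pole orders at $x=\infty$ show in addition that $\E_\ebar$ is not regular singular, which is the failure of descent of regular singularity advertised in the introduction. Countability of $K$ enters only in realizing all of $S(K)$ as the successive zero loci of one sequence $(c_m)$ — exactly what is unavailable over an uncountable field, in accordance with Theorem~\ref{main}.
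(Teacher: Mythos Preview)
Your construction and the verification of well-definedness and closed-fibre triviality coincide with the paper's (up to a harmless index shift: the paper takes $\de_x^{(p^h)}(e_2)=\prod_{i=0}^h(y-a_i)\,e_1$, so its coefficient has $y$-degree $h+1$ rather than $h$). The genuine divergence is in the non-isotriviality step. The paper invokes Lemma~\ref{finito}: if $\E_\ebar$ were finite it would descend to $\A^1_{K'}$ for some $K'$ of finite type over $\F_p$; the base-change matrix then forces all the products $\prod_i(y-a_i)$ into $K''[x]$ for some finite-type $K''$, and one concludes by showing $\bar\F_p\not\subset K''(x)$ via an elementary field-theoretic claim. You instead argue directly that isotriviality would split the extension $0\to\I\to\E_\ebar\to\I\to0$ after a finite \'etale pullback, making the formal primitive $f=\sum_m c_m(y)x^{p^m}$ algebraic over $\kappa(x)$; from a $\kappa(x)$-linear dependence among $f,f^p,\dots$ you extract, by reading off the coefficient of $x^{p^M}$ for $M\gg0$, a recurrence $\sum_{i\le d}a_i c_{M-i}^{p^i}=0$, which the $(-\deg_y)$-valuation rules out because the $i=d$ term strictly dominates. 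This is correct and more self-contained---it avoids both Lemma~\ref{finito} and the auxiliary claim on subfields of $\F_q(\varepsilon_1,\dots,\varepsilon_m)$---while the paper's route has the virtue of exhibiting the field-of-definition obstruction and of advertising Lemma~\ref{finito}, which is reused later. Both arguments ultimately exploit the same feature: the $y$-degrees of the $c_m$ are unbounded.
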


The rest of the section will be spent constructing such a stratified bundle and proving it satisfies the proposition.

As $x$ is a global coordinate of $X$ relative to $S$, it is clear that
\[\D_{X/S}=\Oh_X[\de_{x}^{(k)}\mid k\in\N_{>0}].\]
Moreover any vector bundle is free over $X$. If $E$ is a vector bundle on $X$, then a $\D_{X/S}$-module structure on $E$ is a $\Oh_S$-linear morphism 
\[\begin{split}
\phi:\D_{X/S}&\to \End_{\Oh_S}(E)
\end{split}
\]
extending the $\Oh_X$-module structure on $E$; hence, the image of $\Oh_X\subset\D_{X/S}$ under $\phi$ is always fixed. Therefore, to determine the action of the whole $\D_{X/S}$ it is enough to consider the image of the generators $\de_x^{(k)}$ under $\phi$.
 
Let $e_1,\dotsc,e_r$ be a basis for $E$, and let $A_k=(a_{ij}^k)$ be given by $\de_x^{(k)}(e_i)=\sum a_{ij}^ke_j$. Then the $A_k$, for $k\in\N_{>0}$, determine the $\D_{X/S}$-action: If $s=\sum_{i=1}^r f_i\cdot e_i$ is a section of $E$, with $f_i\in\Oh_X$, then using \eqref{product} we have
\begin{equation}\label{expli}\de_{x_i}^{(k)}(s)=\sum_{i=1}^r\sum_{\substack{a+b=k\\a,b\geq 0}}\de_x^{(a)}(f_i)A_b\cdot e_i.\end{equation}

Note that if $e_1',\dotsc,e_r'$ is an other basis of $E$ and $U=(u_{ij})\in\coh^0(X,\GL_r)$ is given by $e'_i=\sum u_{ij}e_j$ then by \eqref{product} it follows that in this new basis the matrices $A'_k=(a_{ij}^{'k})$ describing the action of $\de_x^{(k)}$ are given by
\begin{equation}\label{matrices}A'_k=\Big[\sum_{\substack {a+b=k\\a,b\geq 0 }}\de_x^{(a)}(U)A_b\Big]U^{-1}.\end{equation}

\

In order to construct our example, let us fix a bijection $n\mapsto a_n$ between the natural numbers and $K=S(K)$. Let $\E\in\Strat(X/S)$ be the rank-two relative stratified bundle $E=\Oh_X\cdot e_1\oplus\Oh_x\cdot e_2$ with $\D_{X/S}$-action given by $\de_x^{(k)}(e_1)=0$ and

\begin{equation}\label{exa}\de_x^{(k)}(e_2)=\begin{cases}
 \prod_{i=0}^h (y-a_i) \cdot e_1
 &\text{if } k=p^h, \\
\quad0&\text{else.}
\end{cases} \end{equation}
In order to prove Proposition \ref{ex} we need to show that this actually defines an action of $\D_{X/S}$ over $E$ and that $\E$ satisfies the two properties of the proposition, namely that it is trivial on every closed fiber and not isotrivial on the geometric generic fiber.

\begin{lemma}
The formulae in \eqref{exa} define a $\D_{X/S}$-module structure on $E$.
\end{lemma}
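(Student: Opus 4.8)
The plan is to exploit the presentation $\D_{X/S}=\Oh_X[\de_x^{(k)}\mid k\in\N_{>0}]$. Once the operators $\de_x^{(k)}$ are prescribed on the basis $e_1,e_2$, formula \eqref{expli} extends them to well-defined $\Oh_S$-linear endomorphisms of $E$ that are, by construction, compatible with the $\Oh_X$-module structure via the Leibniz rule \eqref{product}. Consequently the only relations of $\D_{X/S}$ that remain to be checked are the composition laws $\de_x^{(k)}\circ\de_x^{(l)}=\bin{k+l}{k}\de_x^{(k+l)}$ for $k,l\in\N_{>0}$; and by the standard reduction — combining \eqref{expli} with the fact that these same laws already hold on $\I_{X/S}=\Oh_X$ — it is enough to verify them on $e_1$ and $e_2$.

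First I would record the shape of the action. Writing $A_k$ for the matrix of $\de_x^{(k)}$ in the basis $e_1,e_2$, we have $A_{p^h}=\left(\begin{smallmatrix}0&0\\ \prod_{i=0}^h(y-a_i)&0\end{smallmatrix}\right)$ and $A_k=0$ whenever $k$ is not a power of $p$. Crucially, every entry lies in $K[y]$, i.e. in the image of $\Oh_S$, so it is annihilated by $\de_x^{(a)}$ for all $a>0$. Hence applying $\de_x^{(k)}$ to $\de_x^{(l)}(e_j)=\sum_m (A_l)_{jm}e_m$ produces no Leibniz cross-terms, and $\de_x^{(k)}\circ\de_x^{(l)}$ acts on the basis through a product of two matrices of the form $A_{p^h}$. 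Since each such matrix is strictly lower triangular, any such product vanishes, and therefore $\de_x^{(k)}\circ\de_x^{(l)}=0$ on $E$ for all $k,l>0$.

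It then remains to see that the right-hand side $\bin{k+l}{k}\de_x^{(k+l)}$ vanishes as well. If $k+l$ is not a power of $p$ this is immediate from \eqref{exa}. If $k+l=p^m$ with $k,l\geq 1$, then $0<k<p^m$, and by Lucas's theorem $\bin{p^m}{k}\equiv 0\pmod p$; since $K$ has characteristic $p$ the coefficient is zero. Thus both sides of the composition law agree — both being zero — on $e_1$ and $e_2$, and the law holds on all of $E$, completing the proof.

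The main, and indeed the only non-formal, obstacle is this last divisibility $p\mid\bin{p^m}{k}$ for $0<k<p^m$. It is exactly what forces the action in \eqref{exa} to be supported on $p$-power orders: were $\de_x^{(k)}(e_2)$ allowed to be nonzero for some $k$ with two or more nonzero base-$p$ digits, the uniform vanishing of all binary compositions would clash with the composition law and no $\D_{X/S}$-structure would result. Everything else is the formal bookkeeping of the Leibniz and composition relations.
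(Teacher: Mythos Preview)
Your argument is correct and follows essentially the same route as the paper: both reduce to checking the composition law $\de_x^{(k)}\circ\de_x^{(l)}=\bin{k+l}{k}\de_x^{(k+l)}$ on the basis vectors, observe that the left side vanishes (you phrase this as ``product of strictly lower triangular matrices with entries in $K[y]$'', the paper just computes directly), and then invoke Lucas's theorem to see that $\bin{p^h}{k}=0$ for $0<k<p^h$. The only cosmetic difference is that the paper lists the three relations from a presentation of $\D_{X/S}$ and verifies each, whereas you correctly note that the Leibniz-extension \eqref{expli} makes the commutator-with-$x$ relation automatic and that commutativity is a consequence of the composition law; one small slip is the phrase ``$\de_x^{(k)}\circ\de_x^{(l)}=0$ on $E$'', which is only true on the basis (it fails on $x^2e_1$, for instance), but your next sentence shows you mean exactly that.
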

\begin{proof}
As we fixed the action of the generators of $\D_{X/S}$, for it to extend to a $\D_{X/S}$-action we only need to check that the relations of the generators in the ring of differential operators are satisfied by their images in $\End_{\Oh_X}(E)$.
By \cite[Cor.~2.5]{Ba:gen} the only relations are
\[[\de_x^{(l)},\de_x^{(k)}]=0\]
\[\de_x^{(k)}\circ\de_x^{(l)}=\bin{k+l}{k}\de_x^{(k+l)}\]
\[[\de_x^{(k)},x]=\de_x^{(k-1)}.\]
Let us begin with the second relation: for $k,l>0$
\[\de_x^{(k)}\circ\de_x^{(l)}(e_1)=0\]
\[\de_x^{(k)}\circ\de_x^{(l)}(e_2)=\left\{\begin{array}{cl}\de_x^{(k)}(\prod_{i=0}^h (y-a_i) \cdot e_1)=0
 &\text{ if } l=p^h \\
0&\text{ else}\end{array}\right.\]
Hence, we just need to verify that if $k+l=p^h$ then $\bin{k+l}{k}=0$ but this holds by Lucas's theorem and the first relation follows immediately. Moreover by \eqref{expli} we have
\[\de_x^{(k)}\cdot x(e_i)=\de_x^{(k)}(x\cdot e_i)=\sum_{\substack{a+b=k\\a,b\geq 0}}\de_x^{(a)}(x)\de_x^{(b)}(e_i)=x\de_x^{(k)}(e_i)+\de_x^{(k-1)}(e_i);\]
hence, the third relation trivially holds.
\end{proof}

In order to prove that $\E_s$ is trivial for every closed fiber, let us fix $n\in\N$ and let $s=a_n\in S(K)$, that is, $X_s=\{y=a_n\}\subset X$. Let us consider the basis change on $\Oh_{X_s}\cdot e_1\oplus\Oh_{X_s}\cdot e_2=E_{s}$
given by $e'_1=e_1$ and
\[ e'_2=e_2-\Big[(y-a_0)x+(y-a_0)(y-a_1)x^p+\dotsb+\big[\prod_{i=0}^{n-1}(y-a_i)\big]x^{p^{n-1}}\Big]\cdot e_1\]
then by \eqref{matrices} in this new basis the action of $\D_{X_s/k(s)}$ is given by $\de_x^{(k)}(e'_1)=\de_x^{(k)}(e'_2)=0$ hence is the trivial action. 

We are now left to prove that $\E_\ebar$ is not isotrivial:

\begin{lemma}
Let $\E\in\Strat(X/S)$ be the stratified bundle defined by \eqref{exa}, then $\E_\ebar$ is not isotrivial.
\end{lemma}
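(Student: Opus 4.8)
The plan is to argue by contradiction using Lemma~\ref{finito}. Suppose $\E_\ebar$ were isotrivial; then it is finite by Lemma~\ref{finite}, so by Lemma~\ref{finito} there is a subfield $K'\subset k(\ebar)=\overline{K(y)}$, of finite type over $\F_p$, over which $\E_\ebar$ (and $X_\ebar=\A^1_{k(\ebar)}$) is defined. The whole strategy is to show that the isomorphism class of $\E_\ebar$ secretly remembers almost all of the points $a_i$, so that cofinitely many of them must lie in $K'$; since the $a_i$ range over all of $K\supseteq\bar{\F}_p$ whereas $K'$ is finitely generated over $\F_p$, this will be impossible.

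First I would record that $\E_\ebar$ is a \emph{non-split self-extension of} $\I$. Indeed $\Oh_{X_\ebar}e_1$ is a trivial sub-stratified bundle (since $\de_x^{(k)}(e_1)=0$) with trivial quotient, and it is the unique $\D_{X_\ebar/k(\ebar)}$-stable line: any other would split the extension, but a global splitting $e_2\mapsto e_2-g\,e_1$ would force $g=\sum_{h\ge0}c_hx^{p^h}\in\Oh_{X_\ebar}=k(\ebar)[x]$, which is impossible because all $c_h=\prod_{i=0}^h(y-a_i)$ (see~\eqref{exa}) are nonzero, $y$ being transcendental over $K$. By this uniqueness the sub-line and the quotient are preserved by every automorphism, hence descend, so the extension itself is defined over $K'$.

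Next I would classify such extensions. Over any field $F$, a self-extension of $\I$ on $\A^1_F$ with $\de_x^{(k)}(e_2)\in F\cdot e_1$, vanishing unless $k=p^h$, is described by the sequence $(c_h)\in F^{\N}$, and two such sequences give isomorphic extensions precisely when they differ by a global scalar in $F^\times$ together with a change in finitely many entries. The finitely supported ambiguity is exactly the coboundary coming from $e_2\mapsto e_2-f\,e_1$: here one uses that, by Lucas, $\de_x^{(k)}(x^{p^h})=0$ for every $k\ge1$ with $k\neq p^h$, so that the only $f\in F[x]$ with $\de_x^{(k)}(f)=0$ for all $k\ge1$ outside the powers of $p$ are the $F$-linear combinations of $1,x,x^p,x^{p^2},\dots$, which are polynomials and hence affect only finitely many $c_h$. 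Consequently the \emph{eventual ratios} $c_{h+1}/c_h$ are intrinsic invariants of the extension, insensitive to rescaling and to finite modification. Carrying this out over $K'$ and comparing over $k(\ebar)$, the descended class is given by some $(c'_h)$ with $c'_h\in K'$ cohomologous to $(c_h)$ up to a scalar, whence
\[\frac{c_{h+1}}{c_h}=\frac{c'_{h+1}}{c'_h}\in K'\qquad\text{for all }h\gg0.\]
Since $c_{h+1}/c_h=y-a_{h+1}$, I conclude $y-a_{h+1}\in K'$ for all large $h$, and therefore $a_j-a_i=(y-a_i)-(y-a_j)\in K'$ for all sufficiently large $i,j$.

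The final and easy step is the contradiction, and it is here that the countability of $K$ enters. As $n\mapsto a_n$ is a bijection $\N\to K$, the set $\{a_i\}_{i\ge H}$ is cofinite in $K$ and hence contains infinitely many elements of $\bar{\F}_p$; fixing one such $a_{i_0}$, the differences $a_j-a_{i_0}$ produce infinitely many distinct elements of $\bar{\F}_p\cap K'$. This is absurd, because a field $K'$ finitely generated over $\F_p$ contains only finitely many elements algebraic over $\F_p$ (its field of constants is a finite field). Hence $\E_\ebar$ is not isotrivial. The main obstacle is the middle part: setting up the canonical self-extension structure, making the classification precise enough to isolate the ratio invariant, and checking that this invariant descends to the field of definition $K'$; once $y-a_i\in K'$ is in hand the counting argument is immediate.
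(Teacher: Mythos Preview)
Your route through the extension class of $\E_\ebar$ is more structural than the paper's and reaches the same contradiction, but one step is not justified as written. You classify only those self-extensions of $\I$ already presented in the special form $\de_x^{(k)}(e_2)\in F\cdot e_1$ supported on powers of $p$, and then assert that the descended bundle $\E'$ over $K'$ ``is given by some $(c'_h)$ with $c'_h\in K'$''. Nothing you wrote establishes this, and not every self-extension of $\I$ on $\A^1_F$ admits such a presentation: for $p>2$ the cocycle $g_{p^h}=x^{p^h}$ defines a valid extension, yet any $f\in F[x]$ with $\de_x^{(p^h)}(f)-x^{p^h}\in F$ for all $h$ would need the coefficient of $x^{2p^h}$ in $f$ to equal $\tfrac12$ for every $h$, impossible for a polynomial. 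You can close the gap in two ways. Either bypass the normal form and set $c'_h:=g'_{p^h}(0)\in K'$ for whatever cocycle $(g'_k)$ represents $\E'$: evaluating the relation $\lambda g'_k=\mu g_k+\de_x^{(k)}(f)$ at $x=0$ already gives $c'_{h+1}/c'_h=c_{h+1}/c_h$ for $h\gg0$. Or argue that since $\E'\otimes k(\ebar)$ is isomorphic to a special-form extension, the normalizing $\tilde f\in k(\ebar)[x]$ has all its coefficients outside the monomials $1,x,x^p,x^{p^2},\dots$ forced into $K'$ by your own Lucas computation, so the $K'[x]$-part of $\tilde f$ already puts $\E'$ into special form over $K'$. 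A smaller point: ``preserved by every automorphism, hence descend'' is not a descent argument over the transcendental extension $k(\ebar)/K'$; the sub-$\I$ and the quotient descend because the space of horizontal sections of $\E'$ base-changes to that of $\E_\ebar$ under the flat extension $K'\hookrightarrow k(\ebar)$.

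The paper's proof is more direct and avoids any classification of extensions: writing the isomorphism as a change of basis $U$ defined over a finite-type field $K''\supset K'$ and inverting formula~\eqref{matrices}, every entry of $A_k$ lies in $K''[x]$; in particular $\prod_{i=0}^h(y-a_i)\in K''$ for every $h$, so the fraction field of $\F_p\big[\prod_{i\le h}(y-a_i):h\in\N\big]$, which visibly contains $K$, sits inside $K''(x)$, contradicting that no finitely generated extension of $\F_p$ contains $\bar\F_p$. Your ratio invariant $y-a_{h+1}=c_{h+1}/c_h$ is a cleaner distillation of the same obstruction, at the cost of the extra extension-theoretic setup.
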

\begin{proof}
In order to prove that $\E_\ebar$ is not isotrivial it suffices by Lemma \ref{finito} to show that it cannot be defined over any $K'$ of finite type over $\F_p$. Remark that $\E_\ebar$ is defined over $\A^1_{\ebar}$ which descends to $\A^1_{K'}$ for any $K'\subset K$. By way of contradiction assume there exists such a $K'$ and let $\E'$ be the descent of $\E_\ebar$ over $\A^1_{K'}$. This means that there is a basis $e'_1,e'_2$ of $\E_\ebar$ such that the matrices $A'_k$ in this new basis take values in $K'[x]$. 

Let $U$ be the basis change matrix between $e_i$ and $e'_i$, then $U$ is defined over some $K''$ of finite type over $K'$, hence over $\F_p$, so by \eqref{matrices} we have that $\prod_{i=0}^h (y-a_i) \in K''[x]$. In particular, if we denote by $\mathcal{A}=\F_p[\prod_{i=0}^h (y-a_i)\mid h\in\N]$, our assumption implies that $\mathcal{A}\subset K''[x]$. 

To see that this leads to a contradiction it suffices to show that $\mathcal{K}\nsubseteq K''(x)$ where $\mathcal{K}$ is the quotient field of $\mathcal{A}$. Note that $K\subset\mathcal{K}$; therefore, it is enough to prove that for every $K'$ of finite type over $\F_p$  we have that $K\nsubseteq K'(x)$ and as $\bar{\F}_p\subset K$ it is sufficient to show $\bar{\F}_p\nsubseteq K'(x)$, which follows from the following:
\begin{claim}Let $\F_q$ be a finite field with $q=p^n$ for some $n\in\N$ and let $K\supset\F_q$ an algebraic extension such that $[K:\F_q]=+\infty$. Then for every $m\in\N$ and every $\varepsilon_1,\dotsc,\varepsilon_m$ non-algebraic over $\F_q$ we have that
\[K\nsubseteq \F_q(\varepsilon_1,\dotsc,\varepsilon_m).\]
\end{claim}
\begin{proof}
By induction on $m$, the case $m=0$ being evident. 
Let $m=1$, and  $\gamma\in K-\F_q$ and let $\mu_\gamma(t)$ its minimal polynomial over $\F_q$. By way of contradiction assume $\gamma\in \F_q(\varepsilon_1)$; then $\gamma=f(\varepsilon_1)/g(\varepsilon_1)$ and 
\[g(\varepsilon_1)^{\deg\mu_\gamma}\cdot\mu_\gamma\big(\frac{f(\varepsilon_1)}{g(\varepsilon_1)}\big)=0\]
 gives an algebraic dependence of $\varepsilon_1$ over $\F_q$ which is a contradiction with our assumption that  $\varepsilon_1$ is not algebraic over $\F_q$.
Let now $m\geq 1$, by induction step we know that for every $n\in\N$, $q=p^n$, then no infinite algebraic extension of $\F_q$ is contained in $\F_q(\varepsilon_1,\dotsc,\varepsilon_{m-1})$; hence, there exists an $r$ such that $\F_{q^r}=\F_q(\varepsilon_1,\dotsc,\varepsilon_{m-1})\cap K$. Then 
\[\F_q(\varepsilon_1,\dotsc,\varepsilon_{m-1})(\varepsilon_m)\cap K\subset\F_{q^r}(\varepsilon_m)\neq K\]
 by the $m=1$ step applied to $q=p^{rn}$. In particular, $K\nsubseteq \F_q(\varepsilon_1,\dotsc,\varepsilon_m)$.
\end{proof}
Note that if $K'$ is of finite type over $\F_p$ then $K'$ can be always be written as $\F_q(\varepsilon_1,\dotsc,\varepsilon_m)$ for some $q=p^n$ and $\varepsilon_i$ non-algebraic over $\F_q$; hence, $\bar{\F}_p\nsubseteq K'(x)$. Therefore, $\E$ cannot be defined over any $K'$ of finite type over $\F_p$ and by Lemma \ref{finito} it cannot be finite.
\end{proof}

\begin{remark} 
Let us observe that if $K$ is uncountable the same construction provides an example of a relative stratified bundle $\E\in\Strat(\A^2_K/\A^1_K)$ and a dense subset $\tilde{S}\subset \A^1_K(K)$ such that $\E_s$ is trivial for every $s\in \tilde{S}$ but $\E_\ebar$ is not isotrivial. Therefore, the density condition on $\tilde{S}$ of Theorem \ref{el} will not be sufficient for our purposes, in parallel with the similar problem that one encounters in the equicharacteristic zero case (see \cite[Rmk.~7.2.3]{An:pcur}).
\end{remark}

\section{The main theorem}
From the example in previous section it appears that in the case where $X$ is not projective over $S$ the situation is significantly different from the one in Theorem \ref{el}. In the latter one big obstruction for the theorem to hold is related to $p$ dividing the order of the monodromy group on the closed fibers. In the counterexample of Section \ref{esempio} these are trivial and the obstruction seems more related to the cardinality of $K$. As noticed in Section \ref{monodromy}, the monodromy group does not depend (up to a non-unique isomorphism) on the choice of $x\in X$. Therefore, in this section we will denote the monodromy group of a stratified bundle $\E$ simply by $\pi(\E)$.

In order to phrase the statement of the main theorem let us introduce the following notation: we will denote by $(X,S;\E)$ (and call it \emph{a triple over $K$}) any triple consisting of $X\to S$ smooth morphism of $K$-varieties with geometrically connected fibers and $\E\in\Strat(X/S)$. We  denote furthermore by $K'=K'(X,S;\E)$ a (minimal) algebraically closed subfield of $K$ such that $(X,S;\E)$ is defined over $K'$, and by $(X',S';\E')$ the descent of the triple $(X,S;\E)$ to $K'$. Then the following result holds

\begin{theorem}\label{main}
Let $(X,S;\E)$ be a triple over $K$, let $K'=K'(X,S;\E)\subset K$ and $(X',S';\E')$ the descended  triple to $K'$. Let $k(S')$ be the function field of $S'$. Let us assume:
\[\exists \quad i:k(S')\hookrightarrow K \text{ extending }K'\subset K. \qquad (*)\]
Assume that there exists a dense open $\tilde{S}\subset S$ such that \(\E_s\) is finite for every $s\in \tilde{S}(K)$, then so is $\E_\ebar$. Moreover there exists $s\in \tilde{S}(K)$ such that $\pi(\E_s)(K)\simeq \pi(\E_\ebar)(\overline{k(S)})$, in particular
\begin{itemize}
\item[i)] $|\pi(\E_s)|$ is bounded over $\tilde{S}(K)$;
\item[ii)]if $p\nmid|\pi(\E_s)|$ for every $s\in \tilde{S}(K)$ then $p\nmid|\pi(\E_\ebar)|$;
\item[iii)]any group property holding for $\pi(\E_s)$ for every $s\in \tilde{S}(K)$ holds for $\pi(\E_\ebar)$.
\end{itemize}
\end{theorem}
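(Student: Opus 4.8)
The plan is to exhibit both the geometric generic fibre $\E_\ebar$ and the restriction $\E_{s_0}$ to a single, well-chosen closed point $s_0\in\tilde S(K)$ as base changes, along algebraically closed field extensions, of one and the same stratified bundle defined over the algebraically closed field $\overline{k(S')}$; once this is in place, the finiteness of $\E_\ebar$ and the monodromy isomorphism follow directly from the invariance of the monodromy group under such extensions (Lemma~\ref{basechange}). Fix an algebraic closure $\overline{k(S')}$ of $k(S')$, let $\overline{\eta}'$ be the associated geometric generic point of $S'$, and set $\E_0=\E'_{\overline{\eta}'}\in\Strat(X'_{\overline{\eta}'}/\overline{k(S')})$, the geometric generic fibre of the descended family. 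This $\E_0$ is the common object from which everything will be obtained by extension of scalars; note in particular that this route will \emph{not} require any hypothesis on the transcendence degree of $K$, only $(*)$.

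Next I would use $(*)$ to produce $s_0$. Since $K$ is algebraically closed, the embedding $i:k(S')\hookrightarrow K$ extends to $\bar i:\overline{k(S')}\hookrightarrow K$ over $K'$; and $i$ itself, being a $K'$-morphism of the function field into $K$, is the datum of a $K$-point $s_0$ of $S'$ whose image is the generic point $\eta'$, hence, via $S(K)=S'(K)$, a closed point $s_0\in S(K)$ of $S=S'\times_{K'}K$ lying over $\eta'$. Choosing $i$ in general position (equivalently, enlarging $K'$ so that $\tilde S$ descends to a dense open $\tilde S'\subseteq S'$, whose preimage is $\tilde S$, and using that $\eta'\in\tilde S'$) we may take $s_0\in\tilde S(K)$. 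Base-change transitivity then gives $X_{s_0}=X'\times_{S'}\Spec K$, and since $s_0$ factors through $\eta'$ one obtains
\[
\E_{s_0}=\E'_{\eta'}\otimes_{k(S'),\,i}K=\E_0\otimes_{\overline{k(S')},\,\bar i}K,
\qquad
\E_\ebar=\E'_{\eta'}\otimes_{k(S')}\overline{k(S)}=\E_0\otimes_{\overline{k(S')}}\overline{k(S)},
\]
where the second chain uses $\E_\eta=\E'_{\eta'}\otimes_{k(S')}k(S)$ together with $k(S)=\Frac\bigl(k(S')\otimes_{K'}K\bigr)$ and the resulting inclusions $k(S')\hookrightarrow k(S)\hookrightarrow\overline{k(S)}$.

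It then remains to apply Lemma~\ref{basechange} with base field $\overline{k(S')}$, object $\E_0$, and the two algebraically closed extensions $K$ (via $\bar i$) and $\overline{k(S)}$. The crux is the transcendence-degree bookkeeping: using additivity across $K'\subset\overline{k(S')}\subset K$ and $K'\subset\overline{k(S')}\subset\overline{k(S)}$, together with $\trdeg_{K'}\overline{k(S')}=\dim S'$ and $\trdeg_{k(S')}k(S)=\trdeg_{K'}K$, one computes
\[
\trdeg_{\overline{k(S')}}K=\trdeg_{K'}K-\dim S',
\qquad
\trdeg_{\overline{k(S')}}\overline{k(S)}=\trdeg_{K'}K,
\]
so that $\trdeg_{\overline{k(S')}}\overline{k(S)}\geq\trdeg_{\overline{k(S')}}K$, the difference being exactly $\dim S'$ (in the case both are infinite one instead produces an embedding $K\hookrightarrow\overline{k(S)}$ over $\overline{k(S')}$, possible by the same inequality). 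Since $\E_0\otimes_{\overline{k(S')}}K=\E_{s_0}$ is finite by hypothesis ($s_0\in\tilde S(K)$), Lemma~\ref{basechange} gives that $\E_0\otimes_{\overline{k(S')}}\overline{k(S)}=\E_\ebar$ is finite and $\pi(\E_{s_0})(K)\simeq\pi(\E_\ebar)(\overline{k(S)})$. This establishes the finiteness of $\E_\ebar$ and the existence of the asserted $s$; statements (ii) and (iii) are then immediate, being properties transported across an isomorphism of finite groups.

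For (i) I would spread out the Picard--Vessiot torsor $h_\ebar:Y_\ebar\to X_\ebar$ of $\E_\ebar$ (Lemma~\ref{finite}), a finite \'etale cover of degree $|\pi(\E_\ebar)|$: it is defined over $X$ along a generically finite cover of $S$ and hence extends to a finite \'etale $Y_V\to X\times_SV$ trivialising $\E$ relatively over a dense open $V\subseteq\tilde S$; restricting to $t\in V(K)$ trivialises $\E_t$ by a cover of degree $|\pi(\E_\ebar)|$, whence Corollary~\ref{minimal} yields $|\pi(\E_t)|\leq|\pi(\E_\ebar)|$, the desired uniform bound. I expect the main obstacle to be precisely the middle step: realising the two fibres as base changes of the single object $\E_0$ over $\overline{k(S')}$ and checking that the transcendence degrees make Lemma~\ref{basechange} applicable in the correct direction—that passing to the $K$-point $s_0$ lying over $\eta'$ genuinely \emph{lowers} the transcendence degree of the base field by $\dim S'$, so that the known finiteness on this special fibre propagates \emph{upward} to the geometric generic fibre rather than the reverse.
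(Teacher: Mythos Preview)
Your argument is correct and is exactly the paper's approach, just with the details unpacked: the paper's proof (a handful of lines) shrinks $S$ to $\tilde S$, uses $i$ to view $\Spec K$ as a geometric generic point of $S'$ landing at some $s\in S(K)$ with $i^*\E'=\E_s$ finite, and then invokes Lemma~\ref{basechange} without further comment; you have simply made explicit the intermediate object $\E_0=\E'_{\overline{\eta}'}$ over $\overline{k(S')}$ and carried out the transcendence-degree verification that the paper leaves to the reader. Your separate spreading-out argument for (i) goes beyond what the paper writes (it lists (i)--(iii) as immediate consequences without proof), though note that as stated it yields the bound only over the dense open $V\subseteq\tilde S$; to obtain it on all of $\tilde S(K)$ one would run a Noetherian induction on the closed complement, whose irreducible components again satisfy $(*)$ since their function fields have smaller transcendence degree over $K'$.
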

\begin{proof} Up to shrinking $S$, we can assume $\tilde{S}=S$.
Let $\Delta:S'\to S'\times_{\Spec K'} S'$ be the diagonal morphism and $i:k(S')\hookrightarrow K$ be an immersion as in $(*)$. Then the base change along 
\[
\xymatrix{\Spec K\ar[r]^i&\Spec k(S')\ar[r]&\Spec S'
}\]
induces  $(s:\Spec K\to S)\in S(K)$ such that $X'\otimes_{k(S')}K\simeq X_s$ and 
\[i^*\E'=\E_s,\]
where we are considering $i:k(S')\hookrightarrow K$ as a geometric generic point of $S'$.
In particular, $\pi(i^*\E')=\pi(\E_s)$. Therefore, $i^*\E'$ is finite; hence, by Lemma \ref{basechange}, $\E_\ebar$ is finite as well, moreover their monodromy group are isomorphic as constant groups.
\end{proof}

\begin{corollary}\label{maincor}
If $K$ is uncountable then the assumption $(*)$, hence the theorem, always holds.
\end{corollary}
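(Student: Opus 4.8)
The statement is: if $K$ is uncountable then the hypothesis $(*)$ of Theorem \ref{main} always holds, i.e.\ there always exists an embedding $i\colon k(S')\hookrightarrow K$ extending the inclusion $K'\subset K$ of the (minimal) algebraically closed field of definition.

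My proof plan is the following. The key point is a cardinality count: the field $k(S')$ is the function field of a variety over $K'$, so it is finitely generated over $K'$, whereas $K$ is uncountable over $K'$. I want to exhibit inside $K$ a transcendence basis over $K'$ large enough (and then algebraically independent elements) to accommodate $k(S')$.

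\medskip

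\begin{proof}
By Lemma \ref{finito}, since $\E_s$ is finite for the closed points $s\in\tilde S(K)$, the triple $(X,S;\E)$ is defined over a subfield of $K$ of finite type over $\F_p$; taking its algebraic closure inside $K$ we obtain $K'=K'(X,S;\E)$, which has \emph{finite} transcendence degree over $\F_p$. Consequently $K'$ is at most countable. Since $S'$ is a variety over $K'$, its function field $k(S')$ is finitely generated over $K'$ and hence is also at most countable.

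Now let $d=\trdeg_{K'}k(S')$ be the (finite) relative transcendence degree, and fix a transcendence basis $t_1,\dotsc,t_d$ of $k(S')$ over $K'$; thus $k(S')$ is a finite algebraic extension of the purely transcendental field $K'(t_1,\dotsc,t_d)$. Because $K$ is uncountable while $K'$ is countable, we have $\trdeg_{K'}K=+\infty$; in particular we may choose $d$ elements $s_1,\dotsc,s_d\in K$ that are algebraically independent over $K'$. Sending $t_j\mapsto s_j$ defines a $K'$-embedding
\[
K'(t_1,\dotsc,t_d)\hookrightarrow K.
\]
It remains to extend this to all of $k(S')$. Since $k(S')$ is algebraic over $K'(t_1,\dotsc,t_d)$ and $K$ is algebraically closed, any such embedding extends along the algebraic extension $K'(t_1,\dotsc,t_d)\subset k(S')$ to an embedding $i\colon k(S')\hookrightarrow K$, which by construction is the identity on $K'$. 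This is precisely the embedding required in $(*)$, so the hypothesis of Theorem \ref{main} is satisfied and the conclusion follows.
\end{proof}

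\medskip

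The only genuinely delicate point—really the heart of the matter—is the cardinality inequality $\trdeg_{K'}K=\infty$ (equivalently $\trdeg_{K'}K\geq d$), which is where uncountability of $K$ is used: a countable field cannot have infinite transcendence degree witnessed this way unless $K$ itself were countable, and indeed the \textbf{Counterexample} of Proposition \ref{ex} shows the statement genuinely fails for countable $K$. Everything else is the standard fact that an embedding of a purely transcendental subfield into an algebraically closed field extends across a finite (or algebraic) extension, together with the finiteness of $K'$ over $\F_p$ supplied by Lemma \ref{finito}.
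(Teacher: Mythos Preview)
Your overall strategy---show that $K'$ is countable, deduce $\trdeg_{K'}K=\infty$, and then embed $k(S')$ into $K$ by choosing a transcendence basis and extending algebraically---is exactly the paper's approach, and the second half of your argument is correct.

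The problem is the first sentence. Lemma~\ref{finito} says that a \emph{finite absolute} stratified bundle $\E\in\Strat(X/K)$ is defined over a field of finite type over $\F_p$. You are applying it to a \emph{relative} bundle $\E\in\Strat(X/S)$, justified by the finiteness of the fibres $\E_s$. This does not follow: Lemma~\ref{finito} applied to each $\E_s$ only tells you that every $\E_s$ individually descends to some $K_s$ of finite type, and there is no mechanism to bound these uniformly or to conclude anything about the field of definition of the family $\E$. Indeed, the relative $\D_{X/S}$-action is specified by the countably many matrices $A_{i,k}$ for $k\in\N$, each contributing finitely many coefficients; there is no reason these should lie in a finitely generated field, and the construction of Section~\ref{esempio} (or its uncountable variant in the remark following it) shows that in general they do not.

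The correct replacement, which is what the paper does, is to observe directly that a triple $(X,S;\E)$ is determined by \emph{countably} many elements of $K$ (finitely many for $X\to S$ and for $E$, countably many for the $\D_{X/S}$-action), so one may take $K'$ of \emph{countable} transcendence degree over $\F_p$. This is weaker than your claimed ``finite transcendence degree'' but still forces $K'$ to be countable, and from there your argument goes through verbatim. Note also that this argument does not use the finiteness of the $\E_s$ at all: condition $(*)$ holds for every triple over an uncountable $K$, regardless of the hypotheses of Theorem~\ref{main}.
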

\begin{proof}
It suffices to show that if  $K$ is uncountable, then for any triple $(X,S;\E)$ over $K$ there exists  $K'=K'(X,S;\E)$ and  an inclusion $k(S')\subset K$ extending $K'\subset K$. But it is easy to check that a triple $(X,S;\E)$ is defined by countably many data; hence, we can choose $K'$ such that it has countable transcendence degree over $\F_p$. As $K$ is uncountable, it has infinite transcendence degree over $K'$; hence, there always exists $k(S')\subset K$ as in $(*)$.
\end{proof}

\begin{remark}Notice that if the smooth morphism $X\to S$ does not have geometrically connected fibers then we lose the notion of monodromy group on the closed and geometric generic fibers: if $X$ is a $K$-variety which is not connected and $\mathbb{I}_{X/K}$ is the trivial stratified bundle on $X$ then $\End (\mathbb{I}_{X/K})\neq K$, hence $\Strat(X/K)$ is not a Tannakian category. Nevertheless, if we do not assume $X\to S$ to have geometrically connected fibers, the same proof shows that if $\E_s$ is finite when restricted to every connected component of $X_s$, then the same holds for $\E_\ebar$ on every connected component of $X_\ebar$.
\end{remark}

\section{Regular singularity and a refinement of the theorem}
Regardless of the example in Section \ref{esempio}, there is a way to broaden Theorem \ref{el} in the case where $K$ is countable, making the additional assumption that the stratified bundle is regular singular on the geometric generic fiber.

Let $X$ be a smooth variety over $K$ and let $(X,\overline{X})$ be a \emph{good partial compactification} of $X$; that is: $\overline{X}$ is a smooth variety over $K$ such that $X\subset\overline{X}$ is an open subscheme and $D=\overline{X}\backslash X$ is a strict normal crossing divisor. Let $\D_{\overline{X}/K}(\log D)\subset \D_{\overline{X}/K}$ the subalgebra generated by the differential operators that locally fix all powers of the ideal of definition of $D$. If $\U\subset \overline{X}$ admits global  coordinates $x_1,\dotsc,x_d$ and $D$ is smooth and given by $\{x_1=0\}$ then 
\[\D_{\overline{X}/K}(\log D)_{|\U}=\Oh_\U\big[x_1^k\de_{x_1}^{(k)},\de_{x_i}^{(k)}\mid i\in\{2,\dotsc,d\},k\in\N_{>0}\big].\]
\begin{definition}
A stratified bundle $\E\in\Strat(X/K)$ is called \emph{$(X,\overline{X})$-regular singular} if it extends to a locally free \(\Oh_{\overline{X}}\)-coherent $\D_{\overline{X}/k}(\log D)$-module $\overline{\E}$ on $\overline{X}$. It is \emph{regular singular} if it is $(X,\overline{X})$-regular singular for every partial good compactification $(X,\overline{X})$.
\end{definition}

\begin{remark}
There is a parallel notion of regular singularities in characteristic zero. Despite the fact that isotrivial implies regular singular over the complex numbers, this is not longer true in positive characteristic, due to the existence of wild coverings (for a more precise statement, see \cite[Thm.~1.1]{Kin:rs}).
\end{remark}
For a $(X,\overline{X})$-regular singular stratified bundle $\E$ we have a theory of exponents (see \cite[§3]{Gie:flat}) of $\E$ along $D$: it is a finite subset $\Exp_D(\E)\subset\Z_p/\Z$ given by the following:

\begin{proposition}\emph{\cite[Lemma~3.8]{Gie:flat},\cite[Prop.~4.12]{Kin:rs} }\label{exp}Let $\overline{X}=\Spec A$ be a smooth variety over $K=\bar{K}$ with global coordinates $x_1,\dotsc,x_d$ and let $D$ be the smooth divisor defined by $\{x_1=0\}$. Let $\E\in\Strat(X/K)$ a $(X,\overline{X})$-regular singular stratified bundle and $\overline{\E}$ a locally free $\D_{\overline{X}/K}(\log D)$-module extending $\E$ . Then there exists a decomposition of $\overline{\E}_{|D}=\bigoplus K_\alpha$ with $\alpha\in\Z_p$ such that $x_1^k\de_{x_1}^{(k)}$ acts on $K_\alpha$ by multiplication by ${\alpha\choose k}$. The image in $\Z_p/\Z$ of the $\alpha\in\Z_p$ such that $K_\alpha\neq 0$  are called the \emph{exponents} of $\E$ along $D$ and do not depend on the choice of $\overline{\E}$.
\end{proposition}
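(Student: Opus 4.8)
The plan is to reduce the statement to the simultaneous diagonalisation, over the $K$-algebra $\Oh_D=A/x_1A$, of a commuting family of $\Oh_D$-linear endomorphisms of $M:=\overline{\E}_{|D}=\overline{\E}/x_1\overline{\E}$, namely the residues of the logarithmic operators $\theta^{(k)}:=x_1^k\de_{x_1}^{(k)}\in\D_{\overline{X}/K}(\log D)$. First I would record two elementary facts, both consequences of the Leibniz rule \eqref{product} and of $\de_{x_1}^{(m)}\de_{x_1}^{(n)}=\binom{m+n}{m}\de_{x_1}^{(m+n)}$. Using $\de_{x_1}^{(k)}x_1^{l}=\sum_{a=0}^{k}\binom{l}{a}x_1^{l-a}\de_{x_1}^{(k-a)}$ one checks the exact identity in $\D_{\overline{X}/K}(\log D)$
\[\theta^{(k)}\theta^{(l)}=\sum_{a=0}^{k}\binom{l}{a}\binom{k+l-a}{k-a}\,\theta^{(k+l-a)},\]
which shows that the $\theta^{(k)}$ commute; and the same computation gives $[\theta^{(k)},f]\in x_1\,\D_{\overline{X}/K}(\log D)$ for every $f\in\Oh_{\overline{X}}$, so that the induced operators $\bar\theta^{(k)}$ on $M$ are $\Oh_D$-linear. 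Thus the $\bar\theta^{(k)}$ generate a commutative $\Oh_D$-subalgebra of $\End_{\Oh_D}(M)$.

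The heart of the matter is to show this algebra is split, with the prescribed characters. For this I would introduce the abstract $K$-algebra $R_{\mathrm{univ}}$ free on symbols $\theta^{(k)}$ ($k\geq 0$, $\theta^{(0)}=1$) with the multiplication law displayed above. Since that law is precisely the relation satisfied by the functions $\alpha\mapsto\binom{\alpha}{k}$, there is a $K$-algebra homomorphism $R_{\mathrm{univ}}\to\mathrm{Map}(\Z_p,\F_p)$ sending $\theta^{(k)}$ to $\alpha\mapsto\binom{\alpha}{k}\bmod p$. This map is \emph{injective}: a finite relation $\sum_k c_k\binom{\alpha}{k}=0$ evaluated at $\alpha=0,1,2,\dots$ yields a unitriangular linear system in the $c_k$, forcing all $c_k=0$. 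Two consequences follow. By Lucas' theorem $\binom{\alpha}{p^j}\equiv\alpha_j\pmod p$, where $\alpha=\sum_j\alpha_j p^j$, so the image of $\theta^{(p^j)}$ takes values in $\F_p$ and hence $\theta^{(p^j)}$ satisfies the separable equation $T^p-T=0$ in $R_{\mathrm{univ}}$; moreover $\binom{\alpha}{k}\equiv\prod_j\binom{\alpha_j}{k_j}\pmod p$ exhibits each $\binom{\cdot}{k}$ as a polynomial in the $\binom{\cdot}{p^j}$, so by injectivity every $\theta^{(k)}$ lies in the subalgebra of $R_{\mathrm{univ}}$ generated by the $\theta^{(p^j)}$.

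Since $M$ is a module over $R_{\mathrm{univ}}$, each of the commuting operators $\bar\theta^{(p^j)}$ satisfies $T^p-T=\prod_{c\in\F_p}(T-c)$, a polynomial whose distinct roots lie in $\F_p\subset K$ and therefore differ by units of $\Oh_D$. The Chinese Remainder Theorem then splits $M$ into the $\Oh_D$-direct sum of the common eigenmodules $\bigcap_j\ker(\bar\theta^{(p^j)}-c_j)$, of which only finitely many are nonzero because $\rk M=r$. On such a piece every $\bar\theta^{(k)}$, being a polynomial in the $\bar\theta^{(p^j)}$, acts by the single scalar $\binom{\alpha}{k}$ with $\alpha:=\sum_j c_j p^j\in\Z_p$ (lifting each $c_j$ to $\{0,\dots,p-1\}$); setting $K_\alpha$ to be this piece gives the asserted decomposition $\overline{\E}_{|D}=\bigoplus K_\alpha$. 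I expect this paragraph together with the injectivity and separability of the previous one to be the genuine obstacle: it is the specifically positive-characteristic input, with no analogue over $\mathbb{C}$, that the presence of infinitely many divided-power operators $\theta^{(p^j)}$ forces the residue to be \emph{semisimple} with eigenvalues the $p$-adic digits of the exponents.

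Finally, for the independence of the choice of $\overline{\E}$, I would localise at a generic point of the smooth divisor $D$, where $\Oh_{\overline{X}}$ becomes a discrete valuation ring with uniformiser $x_1$ and two admissible extensions $\overline{\E},\overline{\E}'$ become $\D_{\overline{X}/K}(\log D)$-stable lattices inside the common generic fibre $\E$. Any two such lattices are commensurable and are joined by a chain of elementary modifications along $D$; since multiplication by $x_1$ carries the line on which $\theta^{(k)}$ acts by $\binom{\alpha}{k}$ to the one on which it acts by $\binom{\alpha-1}{k}$, each modification shifts the eigenvalue parameters by an integer. Hence the multiset of $\alpha\in\Z_p$ depends on $\overline{\E}$ only up to $\Z$, which is exactly the claim that the exponents $\Exp_D(\E)\subset\Z_p/\Z$ are well defined.
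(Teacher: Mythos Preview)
The paper states this proposition without proof, citing \cite{Gie:flat} and \cite{Kin:rs}; your argument is essentially the standard one found in those references and is correct. The crucial positive-characteristic input you isolate --- that each residue $\bar\theta^{(p^j)}$ satisfies $T^p-T=0$ because the commutative algebra generated by the $\theta^{(k)}$ injects into $\mathrm{Map}(\Z_p,\F_p)$ --- is exactly Gieseker's mechanism, and the simultaneous diagonalisation via orthogonal idempotents is the natural way to conclude.

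Two minor points on your final paragraph. First, the chain of elementary modifications joining two $\D_{\overline{X}/K}(\log D)$-stable lattices must itself consist of $\theta^{(k)}$-stable lattices, and this is not automatic from the elementary divisor theorem; one obtains such a chain by first refining via $L'+x_1L$ (which is stable since $\theta^{(k)}(x_1L)\subset x_1L$) and then using the eigendecomposition of the $k(\xi)$-vector space $L/(L'+x_1L)$ established in the first part to peel off one eigenline at a time. Second, your shift has the wrong sign: from $\theta^{(k)}(x_1e)=x_1\bigl(\theta^{(k)}+\theta^{(k-1)}\bigr)(e)$ and the Vandermonde identity $\binom{\alpha}{k}+\binom{\alpha}{k-1}=\binom{\alpha+1}{k}$ one sees that multiplication by $x_1$ moves the exponent from $\alpha$ to $\alpha+1$, not $\alpha-1$. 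This is of course immaterial modulo $\Z$.
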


If $D$ is not smooth $\Exp_D(\E)$ is defined to be the union of the exponents along all the irreducible components of $D$. By \cite[Cor.~5.4]{Kin:rs} $\E$ extends to a stratified bundle $\overline{\E}$ on $\overline{X}$ if and only if its exponents are zero. In particular, \cite[Prop.~4.11]{Kin:rs} implies that if $\E$ is finite then its exponents are torsion. Moreover:

\begin{lemma}\label{torsion}
Let $\E$ be a $\D_{X/S}$-module such that $\E_s$ is finite for every $s\in\tilde{S}$ a dense subset of $S(K)$. If $\E_\ebar$ is regular singular then the exponents of $\E_\ebar$ with respect to any  partial good compactification of $X_{\bar{\eta}}$ are torsion.
\end{lemma}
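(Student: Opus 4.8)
The plan is to compute the exponents of $\E_\ebar$ by spreading the whole situation out over a dense open of $S$ and showing that the exponents stay \emph{constant} in the resulting family, so that they agree with the exponents of the finite bundles $\E_s$, which are torsion by \cite[Prop.~4.11]{Kin:rs}.

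First I would fix a good partial compactification $(X_\ebar,\overline{X}_\ebar)$ with boundary $D_\ebar$. All the relevant data are of finite type over $\overline{k(S)}$, which is an algebraic closure of $k(S)$, so they descend to a finite extension of $k(S)$; after replacing $S$ by a suitable connected finite \'etale cover (which changes neither the geometric generic fiber nor the finiteness of the $\E_s$, since $k(s)=K$ is algebraically closed) and shrinking, they spread out to a relative good partial compactification $\overline{X}_U\to U$ over a dense open $U\subset S$, with $D_U=\overline{X}_U\setminus X_U$ a relative strict normal crossing divisor whose geometric generic fiber is $(X_\ebar,\overline{X}_\ebar,D_\ebar)$. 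Smoothness of $\overline{X}_s$ and the normal crossing condition on $D_s$ are open, hence hold on a dense open of fibers. Since $\E_\ebar$ is regular singular it admits a locally free log extension $\overline{\E}_\ebar$ on $\overline{X}_\ebar$, again a finite-type datum, so after further shrinking $U$ it spreads out to a locally free $\D_{\overline{X}_U/U}(\log D_U)$-module $\overline{\E}_U$ restricting to $\E$ on $X_U$. Its restriction to a fiber $\overline{X}_s$ is a log extension of $\E_s$, so $\E_s$ is regular singular with respect to $(X_s,\overline{X}_s)$; this already yields the converse statement quoted in the introduction.

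I would then compare exponents fiberwise. Fix an irreducible component $D_{U,i}$ of $D_U$, which is integral, and work locally with a coordinate $x_1$ cutting it out; set $\theta_j=x_1^{p^j}\de_{x_1}^{(p^j)}$. By Proposition~\ref{exp} the commuting operators $\theta_j$ act $\Oh$-linearly on $\overline{\E}_\ebar|_{D_\ebar}$ with eigenvalues $\bin{\alpha}{p^j}\bmod p\in\F_p$, i.e.\ the $j$-th $p$-adic digit of the exponent $\alpha\in\Z_p$; by Lucas these digits determine $\alpha$, and there are only finitely many exponents $\alpha_1,\dots,\alpha_m$. At the generic point $\eta_i$ of $D_{U,i}$ the same operators act on the finite-dimensional fiber of $\overline{\E}_U|_{D_{U,i}}$, and since their eigenvalues lie in the prime field $\F_p$ the joint generalized eigenspace decomposition is already defined over $k(D_{U,i})$ and recovers $\bigoplus_\alpha K_\alpha$ after base change to the geometric generic point. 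I would spread this out to a decomposition $\overline{\E}_U|_{D_{U,i}}=\bigoplus_\alpha M_\alpha$ into locally free summands over a dense open of $D_{U,i}$.

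The key, and only delicate, step is that each $\theta_j$ acts on $M_\alpha$ by the \emph{constant} scalar $\bin{\alpha}{p^j}\bmod p$: the endomorphism $\theta_j|_{M_\alpha}-\bin{\alpha}{p^j}$ is a section of the locally free sheaf $\End(M_\alpha)$ on the integral scheme $D_{U,i}$ which vanishes at $\eta_i$ (this is the eigenvalue property at the generic point), hence vanishes identically. What makes this work uniformly is that the eigenvalue is an element of $\F_p$, literally constant along the base; this is exactly what lets one control all of the infinitely many $\theta_j$ at once on a single dense open, rather than on a shrinking family of opens. Restricting to a fiber $D_s$ for $s$ in a dense open $U'\subset U$, the summands $M_\alpha|_{D_s}$ are nonzero and $\theta_j$ acts on them by $\bin{\alpha}{p^j}$, so $\Exp_{D_s}(\E_s)=\Exp_{D_\ebar}(\E_\ebar)$ for every $s\in U'(K)$. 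As $\tilde S$ is dense it meets $U'(K)$, and for such $s$ the bundle $\E_s$ is finite and regular singular, so its exponents are torsion by \cite[Prop.~4.11]{Kin:rs}; hence the exponents of $\E_\ebar$ are torsion as well.
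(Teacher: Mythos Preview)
Your proof is correct and follows essentially the same route as the paper: spread out the good partial compactification and the logarithmic extension over (an \'etale open of) $S$, transport the eigenspace decomposition of Proposition~\ref{exp} from the geometric generic fiber to the relative boundary divisor, and conclude that the exponents are the same on closed fibers, where they are torsion because $\E_s$ is finite. The paper phrases the constancy step in terms of choosing an explicit basis of $\overline{E}$ in which the matrices $\tilde B_k$ are diagonal with entries $\binom{\alpha}{k}\in\F_p$, whereas you phrase it as the vanishing of the section $\theta_j|_{M_\alpha}-\binom{\alpha}{p^j}$ of $\End(M_\alpha)$ at the generic point of the integral scheme $D_{U,i}$; these are the same observation, and your emphasis on the eigenvalues lying in $\F_p$ (hence being literally constant, so that a single open suffices for all $j$) makes explicit the point that in the paper is packaged into the phrase ``diagonal with values $\binom{\alpha}{k}\in\F_p$''.
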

\begin{proof}
Let us fix $(X_\ebar,\overline{X}_\ebar)$  a partial good compactification and let $D_\ebar=\overline{X}_\ebar\backslash X_\ebar$. As the exponents can be checked locally, we can shrink $\overline{X}_{\bar{\eta}}$ around the generic point of one of the irreducible components of $D_{\bar{\eta}}$ at a time. Moreover in order to prove the lemma we are allowed to take a generically finite \'etale open $S'$ of $S$ and substitute $X$ by $X\times_S S'$ (and $\tilde{S}$ by its preimage) as the geometric generic fiber is the same. Finally for every $s\in S(K)$ we have that $X'_s$ is either empty  or a finite union of copies of $X_s$; hence, we will still denote by $s$ any point $s'\in S'$ lying over it.

Hence, without loss of generality, we can assume that we are in the following situation: the partial good compactification $(X_\ebar, \overline{X}_\ebar)$ is the restriction of a relative good partial compactification $(X,\overline{X})$ defined on the whole $S$, $\overline{X}$ is the spectrum of a ring $A$,  with global relative coordinates $x_1,\dotsc,x_d$ over $S$ and finally  $D=\overline{X}\backslash X$ is defined by $\{x_1=0\}$. Moreover we can assume that $\E$ is globally free and that on the geometric generic fiber $\E_\ebar$ extends to a globally free \(\D_{X_{\bar{\eta}}/\overline{k(S})}\)-module $\overline{\E}_\ebar$. 

Let $s\in \tilde{S}$ be any point such that $X_s\cap D\neq\emptyset$, and let us consider the globally free $\Oh_X$-module $\bar{E}=\Oh_{\overline{X}}\bar{e}_1\oplus\dotsb\oplus\Oh_{\overline{X}}\bar{e}_r$.  Then the $\bar{e}_i$ induce a basis on the restriction of $\overline{E}$ to the closed fiber over $s$ (as well as to the geometric generic one) and to the boundary divisor (as well as to its complement) as in the following commutative diagram:
\[\xymatrix{
E_s=\bigoplus_{i=1}^r\Oh_{X_s}e^s_i&\ar[l]_{\otimes k(s)}E=\bigoplus_{i=1}^r\Oh_Xe_i\ar[r]^{\otimes\overline{k(S)}}&E_\ebar=\bigoplus_{i=1}^r\Oh_{X_\ebar}\varepsilon_i
\\
\overline{E}_s=\bigoplus_{i=1}^r\Oh_{\overline{X}_s}\bar{e}^s_i\ar[u]^{|X_s}\ar[d]_{|_{D_s}}&\ar[l]_{\otimes k(s)}\overline{E}=\bigoplus_{i=1}^r\Oh_{\overline{X}}\bar{e}_i\ar[r]^{\otimes\overline{k(S)}}\ar[u]^{|_X}\ar[d]_{|_D}&\overline{E}_\ebar=\bigoplus_{i=1}^r\Oh_{\overline{X}_{\bar{\eta}}}\bar{\varepsilon}_i\ar[u]^{|_{X_{\bar{\eta}}}}\ar[d]_{|_{D_{\bar{\eta}}}}
\\
\overline{E}_{|D_s}=\bigoplus_{i=1}^r\Oh_{D_s}\tilde{e}^s_i&\ar[l]_{\otimes k(s)}\overline{E}_{|D}=\bigoplus_{i=1}^r\Oh_D\tilde{e}_i\ar[r]^{\otimes\overline{k(S)}}&\overline{E}_{|D_\ebar}=\bigoplus_{i=1}^r\Oh_{D_{\bar{\eta}}}\tilde{\varepsilon}_i .
}
\]
Consider the first line of the diagram: on the first (respectively second and third) column there is an action of $\D_{X_s/k(s)}$ (respectively $\D_{X/S}$ and $\D_{X_{\bar{\eta}}/\overline{k(S)}}$), compatible with each other. On the last column this action extends to a logarithmic action on $\overline{E}_\ebar$ that we want to extend compatibly to $\overline{E}$. 

Similarly as in Section \ref{esempio}, let $A_{i,k}$ be the matrices describing the action of $\de_{x_i}^{(k)}\in\D_{X/S}$ in the basis $e_i$, then the same ones describe the action of $\de_{x_i}^{(k)}\in\D_{X_\ebar/\overline{k(S)}}$ in the basis $\varepsilon_i$. By regular singularity of $\E_\ebar$ this action extends to a $\D_{\overline{X}_\ebar/\overline{k(S)}}(\log D_\ebar)$-action. Therefore, there is a second basis  $\varepsilon'_1,\dotsc,\varepsilon'_d$ on the geometric generic fiber such that in the new basis the matrices $A'_{i,k}$ have no poles in $x_1$ for $i\neq 1$ and logarithmic poles for $i=1$. Let $U\in\coh^0(X_\ebar,\GL_r)$ the basis change matrix from $\varepsilon_i$ to $\varepsilon'_i$. Taking a generically finite \'etale open of $S$ we can assume that $U$ is defined on the whole $S$; hence, the $A'_{i,k}$ are defined over the whole $S$ as well and this defines an action of 
\[\D_{X/S}(\log D)\doteq \Oh_X\big[x_1^k\de_{x_1}^{(k)},\de_{x_i}^{(k)}\mid i\in\{2,\dotsc,d\},k\in\N_{>0}]\]
on $\overline{E}$, compatible with the logarithmic action on the fibers over $\ebar$. In particular, this induces a $\D_{\overline{X}_s}(\log D_s)$-action on $\overline{E}_s$; hence, $\E_s$ is $(X_s,\overline{X}_s)$-regular singular (notice that if $S'$ is an \'etale open of $S$ then for $s\in S(K)$ the fiber $X'_s$ of $X'=X\times_S S'$ is either empty or the disjoint union of finitely many copies of $X_s$).

We want now to compare $\Exp_{D_\ebar}(\E_\ebar)$ and  $\Exp_{D_s}(\E_s)$. By Proposition \ref{exp} we have that $\overline{\E}_{\ebar|_{D_\ebar}}=\oplus F_\alpha$; hence, there exists $\tilde{\varepsilon}_i$ a basis of $\overline{E}_{|D_\ebar}$ such that the matrices $\tilde{B}_k$ defining the action of $x_1^k\de_{(k),x_1}$ are diagonal with values $\bin{\alpha}{k}\in\F_p$. Let $\bar{\varepsilon}_i$ be a lift of $\tilde{\varepsilon}_i$, then up to taking an \'etale generically finite open of $S$ we can assume that $\bar{\varepsilon}_i$ is a restriction of a basis $\bar{e}_i$ of $\overline{E}$ over $\overline{X}$. In particular, the decomposition extends as well and $\overline{\E}_{|D}=\oplus F_\alpha$ induces a decomposition on $\overline{\E}_{s|_{D_s}}$. This decomposition must coincide with the one given by Proposition \ref{exp}; hence, the exponents must be the same of the ones of $\E_\ebar$. As $\E_s$ is isotrivial, its exponents are torsion; hence, so must be the ones of $\E_\ebar$.
\end{proof}

\begin{remark}
While the previous proof shows that if $\E_\ebar$ is regular singular so are the $\E_s$ for every $s\in S(K)$, the example in Section \ref{esempio}, together with Theorem~\ref{smoothrs}, shows that the converse does not hold in general (however, one can prove it is the case when $K$ is uncountable). On the contrary, in characteristic zero it is always true that if a relative flat connection is regular with respect to some smooth good compactification on the fibers over a dense set of points of $S$, then it is regular on the geometric generic fiber, as proven in  \cite[Lemma~8.1.1]{An:pcur}.
\end{remark}

Before stating and proving the main theorem of this section we need to prove the existence of Kawamata coverings in positive characteristic. Analogously to the original construction in characteristic zero (\cite[Thm.~17]{kaw}) we have the following
\begin{theorem}\label{kawamata}
Let $X$ be a projective smooth variety  of dimension $d$ over an algebraically closed field $K$ of characteristic $p$ and let $D$ be a simple normal crossing divisor on $X$. Let $m\in\N$ prime to $p$, then there exist a projective smooth variety $Y$ and a finite surjective mapping $f:Y\to X$ such that $(f^*D)_{red}$ is a simple normal crossing divisor on $Y$ and if $f^*D=\sum m_i \tilde{D}_i$ is the decomposition in irreducible components with $\tilde{D}_i\neq\tilde{D}_j$ for $i\neq j$ then $m\mid m_i$ for all $i$ and $m_i$ are all prime to $p$.
\end{theorem}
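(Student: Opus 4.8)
The plan is to transcribe Kawamata's construction \cite{kaw} in its Bloch--Gieseker form, that is, to build $f$ as the restriction to $X$ of a Kummer covering of a projective space, rather than by iterated cyclic covers followed by a resolution; this is what keeps $f$ \emph{finite}, as the statement demands. The hypothesis $p\nmid m$ will enter exactly twice: it guarantees that the covering is tame, so that all ramification indices, and hence all $m_i$, are prime to $p$, and it guarantees that over a normal crossing configuration the cover stays smooth.

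First I would fix a very ample line bundle $A$ on $X$, a high power of an auxiliary very ample bundle, chosen so that each $A\otimes\Oh_X(-D_i)$ is globally generated and $\dim|A|$ is large. Embedding $X\hookrightarrow\mathbb{P}^n$ by $|A|$, I would then choose coordinate hyperplanes $H_0,\dots,H_n$ in general position such that, for $i=1,\dots,\ell$, the component $D_i$ is a component of $X\cap H_i$, and such that the full arrangement $\sum_j H_j$ restricts on $X$ to a simple normal crossing divisor containing $D=\sum_i D_i$. This is a genericity (Bertini) step; since $A$ is very ample and $K=\bar K$, a general member of $|A|$ is smooth and meets every stratum of a fixed normal crossing divisor transversally, so the classical Bertini theorem applies in characteristic $p$ without change. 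The extra components of the $X\cap H_i$ that are not among the $D_i$ play the role of Kawamata's moving divisors; they will receive multiplicity $m$ as well but lie outside $D$.

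Next I would take the Kummer covering $\Phi:\mathbb{P}^n\to\mathbb{P}^n$, $[z_0:\dots:z_n]\mapsto[z_0^m:\dots:z_n^m]$, which is finite of degree $m^n$, \'etale over the open torus and ramified of index $m$ along each coordinate hyperplane. The crucial point is that $\Phi$ is smooth over the coordinate arrangement precisely because each coordinate acquires its own $m$-th root: in an affine chart the cover is $\{w_j^m=z_j\}_j$, which is smooth even at a $k$-fold crossing, in contrast to a single cyclic cover of a reducible divisor $\{t_1t_2=0\}$, where $w^m=t_1t_2$ is singular. Because $p\nmid m$ the cover is separable and tamely ramified, so all ramification indices are prime to $p$. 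Setting $Y=\Phi^{-1}(X)$ and $f=\Phi|_Y$, the morphism $f$ is finite surjective, $f^*H_i=m\cdot(f^*H_i)_{red}$ forces $f^*D_i$ to have multiplicity $m$, and $(f^*D)_{red}$ is simple normal crossing; thus $m\mid m_i$ and $p\nmid m_i$ for every $i$.

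The main obstacle is the smoothness of $Y$, and this is the only place where the absence of resolution of singularities in characteristic $p$ could hurt: I must produce a \emph{finite} smooth cover directly, without blowing up. Over the open torus $\Phi$ is \'etale, so $Y$ is smooth there automatically; over the coordinate hyperplanes $Y$ is smooth as soon as $X$ meets the arrangement $\sum_j H_j$ transversally in the normal crossing sense, because the local model is then $\{w_j^m=z_j\}$ restricted to a coordinate subspace. Establishing this transversality for a general position of $X$ relative to the fixed arrangement is a Bertini/Kleiman generic transversality statement; it holds over $K=\bar K$ in arbitrary characteristic because $\mathbb{P}^n$ is homogeneous under $\mathrm{PGL}_{n+1}$ and a general translate meets a fixed subvariety transversally, while tameness ($p\nmid m$) guarantees that the transverse local equations $w_j^m=z_j$ define smooth covers. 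Once $Y$ is known to be smooth the remaining multiplicity bookkeeping is immediate from the degree-$m$ tame ramification, which completes the proof.
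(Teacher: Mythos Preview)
Your route is different from the paper's: the paper simply transcribes Kawamata's original iterated cyclic-cover construction, handling one irreducible component $D'$ of $D$ at a time. The only characteristic-$p$ adjustments are to choose the integer $N$ (with $N\M-D'$ very ample) so that $m\mid N$ and $(N,p)=1$, and to replace the classical Bertini statement by Kleiman's transversality theorem when picking the auxiliary general members of $|N\M-D'|$. No resolution of singularities is involved: the normalized fibre product of the cyclic covers branched along $D'+H_1,\dots,D'+H_d$ is already smooth by a direct local computation (the lemma the paper cites), valid precisely because $(N,p)=1$, and the resulting map to $X$ is finite from the outset. So your stated reason for preferring the Bloch--Gieseker packaging---that the iterated covers would need a subsequent resolution, destroying finiteness---rests on a misconception about Kawamata's proof.

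More importantly, your smoothness argument for $Y=\Phi^{-1}(X)$ has a genuine gap. You appeal to Kleiman transversality, saying that a general $\mathrm{PGL}_{n+1}$-translate of $X$ meets the coordinate arrangement transversally. But you have already pinned $X$ relative to the hyperplanes $H_1,\dots,H_\ell$ by demanding that each $D_i$ be a component of $X\cap H_i$; you are no longer free to translate. If you do move $X$ to general position, the $D_i$ land in the open torus where $\Phi$ is \'etale and $f^*D_i$ picks up no multiplicity at all---you lose the very conclusion you want. The way to rescue the construction is to fix $X$ and choose each constrained $H_i$ ($i\le\ell$) general inside the sublinear system $\{H:D_i\subset H\}\simeq|A-D_i|$, then run a step-by-step Bertini argument to make the total restricted arrangement SNC on $X$; in characteristic $p$ this needs each $A-D_i$ to be very ample, not merely globally generated as you assume. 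Carried out carefully this works, but it is essentially the same inductive Bertini bookkeeping that Kawamata's original proof (and the paper) already performs, so the single-Kummer-cover packaging buys little.
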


\begin{proof} The proof follows the one of the original theorem (\cite[Thm.~17]{kaw}). One does the construction one irreducible component $D'$ of $D$ at a time, choosing an ample line bundle $\M$ on $X$ and $N\gg0$ such that $N\M-D'$ is very ample. The only additional care that needs to be taken, is to choose $N$ so that $m\mid N$ and $(N,p)=1$, which is possible as $m$ is prime to $p$ (this will be enough to prove that $Y$ is smooth in the very same way by \cite[Lemma~1.8.6]{Gro}). One needs moreover to use  \cite[Cor.~12]{klei} instead of the classical smoothness theorem for general members of a very ample linear system.
\end{proof}

We can now state and prove the following

\begin{theorem}\label{smoothrs}
Let $X\to S$ be a smooth morphism of $K$-varieties with geometrical connected fibers and let $\E\in\Strat(X/S)$. Assume that there exists a dense subset  $\tilde{S}\subset S(K)$ such that, for every $s\in\tilde{S}$, the stratified bundle $\E_s$ has finite monodromy and that the highest power of $p$ dividing $|\pi(\E_s)|$ is bounded over $\tilde{S}$. Assume moreover that $\E_\ebar$ is regular singular, then
\begin{itemize}
\item[i)] there exists $f_\ebar:Y_\ebar\to X_\ebar$ a finite \'etale cover such that $f^*\E_\ebar$ decomposes as direct sum of stratified line bundles;
\item[ii)] if $K\neq \bar{\F}_p$ then $\E_\ebar$ is finite.
\end{itemize} 
\end{theorem}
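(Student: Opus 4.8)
The plan is to reduce to Theorem~\ref{eldiv} by using a Kawamata covering to annihilate the exponents of $\E_\ebar$ along the boundary, thereby trading the non-proper situation for a proper one to which \ref{eldiv} applies. First I would invoke Lemma~\ref{torsion}: since $\E_\ebar$ is regular singular and the $\E_s$ are finite on a dense set, the exponents of $\E_\ebar$ along every component of the boundary of a good partial compactification are torsion. As these exponents lie in $\Z_p/\Z$, and the torsion subgroup of $\Z_p/\Z$ consists precisely of classes of order prime to $p$, there is an integer $m$ \emph{prime to $p$} serving as a common denominator for all of them. I would moreover use the proof of Lemma~\ref{torsion} to arrange, after shrinking $S$ and passing to a generically finite \'etale cover, a \emph{relative} good compactification $(X,\overline{X})$ over $S$ carrying a compatible logarithmic action, so that all the data below spread out over a neighbourhood of the generic point of $S$.

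Next I would make $\overline{X}$ smooth and projective over $S$. Over the geometric generic fibre this is achieved in characteristic $p$ by a de Jong alteration (\cite{dJ}): after a generically finite \'etale modification of $X_\ebar$ — which, by invariance of the monodromy group under restriction to dense opens, does not affect the conclusions, exactly as in the proof of Theorem~\ref{eldiv} — one may assume $X_\ebar=\overline{X}_\ebar\setminus D_\ebar$ with $\overline{X}_\ebar$ smooth projective and $D_\ebar$ a simple normal crossing divisor. Applying Theorem~\ref{kawamata} with the integer $m$ yields a finite surjective $g\colon\overline{Z}_\ebar\to\overline{X}_\ebar$ with $\overline{Z}_\ebar$ smooth projective and $g^{*}D_\ebar=\sum m_i\widetilde{D}_i$ with $m\mid m_i$ and $(m_i,p)=1$. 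By the functoriality of the local description of Proposition~\ref{exp}, pulling back along a cover ramified to order $e$ along a boundary component multiplies the corresponding exponent by $e$; hence along each $\widetilde{D}_i$ the exponents of $g^{*}\E_\ebar$ are $m_i$ times those of $\E_\ebar$, and since $m\mid m_i$ clears every denominator they all become $0$ in $\Z_p/\Z$. By \cite[Cor.~5.4]{Kin:rs} this means $g^{*}\E_\ebar$ extends to a genuine stratified bundle $\widehat{\E}_\ebar\in\Strat(\overline{Z}_\ebar/\overline{k(S)})$. Spreading $g$ and $\widehat{\E}_\ebar$ out and shrinking $S$ once more (and replacing $\overline{Z}$ by a connected component so that the fibres are geometrically connected), I obtain a smooth projective $\overline{Z}\to S$ and $\widehat{\E}\in\Strat(\overline{Z}/S)$ restricting to $g^{*}\E$ over the interior.

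I would then check the hypotheses of Theorem~\ref{eldiv} for $(\overline{Z},S;\widehat{\E})$. For $s$ in a dense set the interior of $\overline{Z}_s$ carries $g_s^{*}\E_s$, which is finite because $\E_s$ is; as monodromy is unchanged under restriction to the dense interior, $\widehat{\E}_s$ is finite, and its monodromy, being a subquotient of $\pi(\E_s)$, has $p$-part bounded by that of $\pi(\E_s)$, hence uniformly bounded over $\tilde S$. Theorem~\ref{eldiv} then gives a finite \'etale $Y'_\ebar\to\overline{Z}_\ebar$ after which $\widehat{\E}_\ebar$ splits as a sum of stratified line bundles, and, when $K\neq\bar{\F}_p$, the finiteness of $\widehat{\E}_\ebar$.

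Finally I would transfer these conclusions back to $X_\ebar$. Let $U_\ebar\subset X_\ebar$ be the dense open over which $g$ is \'etale, i.e.\ the complement of the auxiliary branch divisors introduced by the Kawamata construction. Over the dense open $g^{-1}(U_\ebar)\subset\overline{Z}_\ebar$ one has $\widehat{\E}_\ebar=g^{*}(\E_\ebar|_{U_\ebar})$ with $g\colon g^{-1}(U_\ebar)\to U_\ebar$ finite \'etale; composing $Y'_\ebar\to\overline{Z}_\ebar$ with $g$ produces a finite \'etale cover of $U_\ebar$ splitting $\E_\ebar|_{U_\ebar}$ into line bundles, and in the case $K\neq\bar{\F}_p$ the finiteness of $g^{*}(\E_\ebar|_{U_\ebar})$ forces that of $\E_\ebar|_{U_\ebar}$, since a stratified bundle whose pullback along a finite \'etale cover is isotrivial is itself trivialized by the composite finite \'etale cover and hence finite (Lemma~\ref{finite}). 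Both assertions are properties of the monodromy group $\pi(\E_\ebar)$ — finiteness for (ii), and the existence of a finite-index normal subgroup acting diagonally for (i) — which is preserved by restriction to the dense open $U_\ebar$ (\cite[Lemma~2.5(a)]{Kin:rs}); the realizing finite \'etale cover of $X_\ebar$ is then recovered from the corresponding finite quotient via Lemma~\ref{finite}, yielding (i) and (ii). I expect the main obstacle to be exactly this transfer across a \emph{ramified} cover: because the Kawamata map $g$ is branched inside the interior, one must pass to the \'etale locus $U_\ebar$ and argue at the level of monodromy groups to recover $\E_\ebar$ itself rather than merely a pullback, while simultaneously ensuring that the projective compactification, the cover $g$, the extension $\widehat{\E}$, and the fibrewise finiteness with bounded $p$-part all survive a single coherent shrinking of $S$.
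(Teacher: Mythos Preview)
Your proof is correct and follows the same strategy as the paper: pass to a good projective compactification via a de Jong alteration, invoke Lemma~\ref{torsion} to see the exponents of $\E_\ebar$ are prime-to-$p$ torsion, kill them with a Kawamata covering (Theorem~\ref{kawamata}) so that the pullback extends across the boundary, spread out over $S$, and apply Theorem~\ref{eldiv}. Your transfer-back argument through the \'etale locus $U_\ebar$ and the monodromy group is more explicit than the paper's one-line reduction ``by invariance of the monodromy group it is enough to show the theorem for $\E_{|\U}$\dots\ we can always work up to generically finite \'etale covers'', but the underlying ideas are the same.
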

\begin{proof}
Let $\U\subset X$ be a dense open, then by invariance of the monodromy group it is enough to show the theorem for $\E_{|\U}$ moreover it is enough to prove finiteness for its pullback along any finite \'etale cover. Therefore, we can always work up to generically finite \'etale covers.
Using \cite{dJ} we can find an alteration generically finite \'etale $f:X'\to X$ such that $X'$ admits a good projective compactification relative to $S$.  By \cite[Prop.~4.4]{Kin:rs} the pullback of a regular singular stratified bundle is again regular singular. Hence, without loss of generality, we can assume that $X$ admits a good projective compactification $\overline{X}$ relative to $S$. We will denote by $D=\overline{X}\backslash X$ the divisor at infinity.

Let $\Exp_D(\E_\ebar)\subset \Z_p/\Z$ be the finite set of exponents of $E_\ebar$ along $D_\ebar$ (as defined in Lemma \ref{exp}). As $\E_\ebar$ is regular singular then  by Lemma \ref{torsion} the exponents of $\E_\ebar$ are torsion; let $m\in\N$ an integer prime to $p$ killing the torsion of $\Exp_D(\E_\ebar)$ and let $f:\overline{Y}_\ebar\to \overline{X}_\ebar$ be the Kawamata covering constructed in Theorem \ref{kawamata}: it ramifies on a simple normal crossing divisor $\tilde{D}_\ebar$ containing the divisor at infinity $D_\ebar=\overline{X}_\ebar-X_\ebar$ and it is Kummer on $\overline{X}_\ebar-\tilde{D}_\ebar$. As $m$ divides the ramification order along $D_\ebar$ by \cite[Prop.~4.11]{Kin:rs} the exponents of the pullback of $\E_\ebar$ along $(f^*D_\ebar)_{red}$ are zero; hence, it extends to the whole $Y_\ebar$. Up to taking an \'etale open of $S$ and using a similar argument as in the proof of Lemma \ref{torsion} we can assume that this extension is defined on the whole $S$. Therefore, we have reduced the problem to Theorem \ref{eldiv}.
\end{proof}

\section{Finite vector bundles}

The notion of isotriviality has as well relevance in the category of vector bundles over a proper smooth $K$-variety, even though it is not equivalent to the notion of finiteness (see \cite[Lemma 3.1]{no} and following definition) for vector bundles, at least in positive characteristic. In the same spirit of Theorem \ref{el}, Esnault and Langer proved in the same paper the following:

\begin{theorem}\emph{\cite[Thm.~5.1]{EL:pcur}}\label{elbu} Let $X\to S$ be a  smooth projective morphism of $K$-varieties with geometrically connected fibers and let $E$ be a locally free sheaf over $X$. Assume that there exists a dense subset $\tilde{S}\subset S(K)$ such that, for every $s\in \tilde{S}$, there is a finite \'etale Galois cover  $h_s:Y_s\to X_s$  \emph{of order prime to $p$} such that $h_s^*(E_s)$ is trivial. Then
\begin{itemize}
\item[i)] there exists $f_\ebar:Y_\ebar\to X_\ebar$ a finite \'etale cover \emph{of order prime to $p$} such that $f^*E_\ebar$ decomposes as direct sum of stratified line bundles;
\item[ii)] if $K\neq \bar{\F}_p$ then $E_\ebar$ is trivialized by a finite \'etale cover \emph{of order prime to $p$}.
\end{itemize} 
\end{theorem}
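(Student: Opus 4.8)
The plan is to mirror the structure of the proof of Theorem~\ref{eldiv}, reducing in stages to the case of abelian monodromy of order prime to $p$, and then to carry out directly the analysis of torsion line bundles in the family (the case to which Theorem~\ref{eldiv} itself ultimately appeals through Theorem~\ref{el}, so here there is no more basic theorem to invoke and one must do the Picard-scheme argument by hand). First I would make the standard reductions. Since $X\to S$ is already smooth and \emph{projective}, no Chow or de~Jong step is needed; after replacing $S$ by an \'etale open I may assume there is a section $\sigma:S\to X$, and it is harmless to replace $X$ by a finite \'etale cover of order prime to $p$, because the conclusions~(i) and~(ii) concern the existence of further prime-to-$p$ covers of $X_\ebar$, and prime-to-$p$ \'etale covers compose. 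Descent along the Galois cover $h_s:Y_s\to X_s$ then presents $E_s$ as the bundle attached to a representation $\rho_s:\pi_1^\text{\'et}(X_s,\sigma(s))\to\GL_r(K)$ with finite image $\Gamma_s$ of order prime to $p$; as $p\nmid|\Gamma_s|$, this factors through the maximal prime-to-$p$ quotient.

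Next I would reduce to abelian monodromy. Applying Jordan's theorem in the form of Brauer--Feit (\cite{bf}) with trivial $p$-part gives an integer $M=M(r)$ and, for each $s\in\tilde S$, a normal abelian subgroup $A_s\trianglelefteq\Gamma_s$ with $[\Gamma_s:A_s]<M$; since $|\Gamma_s|$ is prime to $p$, so is the order of every subquotient. This produces intermediate Galois covers $g_s:Z_s\to X_s$ of order prime to $p$ and $<M$, through which $h_s$ factors with abelian group $A_s$. By a mild variant of Lemma~\ref{dominate} in which one dominates using only the prime-to-$p$ covers $g_s$ (a finite intersection of subgroups of $\pi_1^\text{\'et}(X_\ebar)$ of prime-to-$p$ index has prime-to-$p$ index, so the resulting cover is again of order prime to $p$), up to shrinking $S$ there is a finite \'etale cover $g':Z'\to X$ of order prime to $p$ dominating all the $g_s$. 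Replacing $X$ by $Z'$ and $E$ by $g'^*E$, and extending $\sigma$ after a further \'etale shrinking, I may assume that for every $s\in\tilde S$ the fibre $E_s$ is a direct sum of line bundles $L_{s,j}$ that are torsion in $\mathrm{Pic}(X_s)$ of order prime to $p$ (abelian, semisimple, prime-to-$p$ representations over $K=\bar K$ diagonalize into characters).

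The heart of the matter is then the behaviour of these torsion line bundles, which I would study through the relative Picard scheme $\mathrm{Pic}_{X/S}$, an abelian scheme $A=\mathrm{Pic}^0_{X/S}$ after shrinking $S$ (using $\sigma$ for representability). For part~(i) I would argue that the locus of $s\in S$ over which $E$ splits relatively into line bundles is constructible and, by hypothesis, dense, hence contains the generic point after shrinking $S$; thus $g'^*E_\ebar$ splits into line bundles and $f_\ebar=g'_\ebar$ already furnishes the prime-to-$p$ cover required by~(i). For part~(ii) one needs more: the summands $L_{s,j}$ have vanishing N\'eron--Severi class, and since this class is constant on geometric fibres after shrinking, the generic summands lie in $\mathrm{Pic}^0(X_\ebar)$; they therefore assemble into sections of $A\to S$ that are torsion on the dense set $\tilde S$. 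I would then invoke that such a section is a genuine \emph{torsion} section of order prime to $p$, after which the associated Kummer covers---finite \'etale of order prime to $p$---trivialize the $L_{\ebar,j}$ and hence $g'^*E_\ebar$, and composing with $g'$ yields the required prime-to-$p$ trivializing cover of $E_\ebar$.

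The hard part will be exactly this last implication, and it is precisely here that the hypothesis $K\neq\bar\F_p$ becomes indispensable. Over $\bar\F_p$ every geometric point of an abelian variety is torsion, so the density of torsion fibres conveys no information about the generic fibre and one cannot expect more than the decomposition~(i); this is the source of the dichotomy in the statement. When $K\neq\bar\F_p$, on the other hand, $K$ contains an element transcendental over $\F_p$, and I would prove that a section of an abelian scheme meeting the torsion locus on a Zariski-dense set of closed fibres is itself torsion of bounded prime-to-$p$ order---a specialization statement of Mordell--Weil flavour which, combined with the finite \'etaleness of the prime-to-$p$ torsion subschemes $A[n]\to S$ (so that a bounded-order torsion section spreads out from the dense fibres to the generic one), is the genuine arithmetic content of the theorem and the step I expect to be the most delicate.
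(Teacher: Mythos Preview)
The paper does not contain a proof of this statement: Theorem~\ref{elbu} is quoted verbatim from \cite[Thm.~5.1]{EL:pcur} and used as a black box. There is therefore no ``paper's own proof'' to compare your proposal against.

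That said, your first two reduction steps---Brauer--Feit to obtain normal abelian subgroups of bounded index, then Lemma~\ref{dominate} to dominate the resulting intermediate covers by a single finite \'etale cover of $X$---are precisely the argument the paper gives for the \emph{next} (unlabeled) theorem in the same section, which reduces the bounded-$p$-power case for vector bundles to Theorem~\ref{elbu}. So you have correctly anticipated the paper's own reduction machinery, and then gone further by sketching a direct Picard-scheme attack on the residual abelian case (torsion sections of $\mathrm{Pic}^0_{X/S}$, with the $K\neq\bar\F_p$ hypothesis entering through a specialization-of-torsion argument). That last part is a reasonable outline of how the Esnault--Langer result is actually proved, but it lies outside the present paper, which simply imports the result.
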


Then, a reasoning similar to the proof of Theorem~\ref{eldiv} proves the following:
\begin{theorem}Let $X\to S$ be a  smooth \emph{projective} morphism of $K$-varieties with geometrically connected fibers and let $E$ be a locally free sheaf over $X$. Assume that there exists a dense subset $\tilde{S}\subset S(K)$ such that, for every $s\in \tilde{S}$, there is a finite \'etale Galois cover $h_s:Y_s\to X_s$ such that $h_s^*(E_s)$ is trivial and that the highest power of $p$ dividing the order of such covers is bounded over $\tilde{S}$. Then
\begin{itemize}
\item[i)] there exists $f_\ebar:Y_\ebar\to X_\ebar$ a finite \'etale cover  such that $f^*E_\ebar$ decomposes as direct sum of stratified line bundles;
\item[ii)] if $K\neq \bar{\F}_p$ then $E_\ebar$ is trivialized by a finite \'etale cover.
\end{itemize} 
\end{theorem}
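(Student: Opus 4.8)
The plan is to mimic the proof of Theorem~\ref{eldiv} almost verbatim, the single structural change being that the monodromy representation on each closed fiber is now extracted by descent theory for vector bundles along the given Galois covers, rather than by Tannakian duality. Since $X\to S$ is already assumed projective, I can skip the preliminary reduction via Chow's lemma and de Jong alterations that was needed in Theorem~\ref{eldiv}; and, after replacing $S$ by an étale open, I may assume that $X\to S$ admits a section $\sigma:S\to X$, so as to be able to invoke Lemma~\ref{dominate}.

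First, for each $s\in\tilde S$ I would pass from the given trivializing Galois cover $h_s:Y_s\to X_s$, of group $G_s=\Gal(Y_s/X_s)$, to the monodromy representation $\rho_s:G_s\to\GL_r(K)$ obtained from the descent datum of $E_s$ along $h_s$; its image $\Gamma_s=\rho_s(G_s)$ is the Galois group of the minimal trivializing cover $Y_s/\ker\rho_s\to X_s$. The key numerical observation is that $\Gamma_s$ is a quotient of $G_s$, so the highest power of $p$ dividing $|\Gamma_s|$ divides the one dividing $|G_s|$ and is therefore bounded by $p^N$ across $\tilde S$. From here the argument runs exactly as in Theorem~\ref{eldiv}: the Brauer--Feit generalization of Jordan's theorem \cite[Theorem]{bf} yields an integer $M=f(r,N)$ and, for every $s$, a normal abelian subgroup $A_s\trianglelefteq\Gamma_s$ with $|\Gamma_s:A_s|<M$; the quotients give Galois covers $g_s:Z_s\to X_s$ of bounded order $<M$ over which the pullback of $E_s$ acquires abelian monodromy $A_s$; and Lemma~\ref{dominate} produces, after shrinking $S$, a single finite étale $g':Z'\to X$ dominating all of them, so that $E'=g'^*E$ has abelian monodromy (a subgroup of $A_s$) on every closed fiber.

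The final reduction would then split the abelian monodromy group $\Gamma'_s=\Gamma^p_s\times\Gamma^{p'}_s$ into its $p$-part and its prime-to-$p$ part, observe that the cover of $Z'_s$ realizing the prime-to-$p$ part has degree $|\Gamma^p_s|\le p^N$, and apply Lemma~\ref{dominate} a second time to obtain $g'':Z''\to Z'$ after which $E''=g''^*E'$ is trivialized on every closed fiber by a finite étale Galois cover of order prime to $p$. This is precisely the hypothesis of the vector-bundle theorem \cite[Thm.~5.1]{EL:pcur} (Theorem~\ref{elbu}), which I would apply to the projective family $Z''\to S$; composing the covers it produces with the finite étale cover $Z''_\ebar\to X_\ebar$ then yields both the line-bundle decomposition (i) and, when $K\neq\bar\F_p$, the trivialization (ii) for $E_\ebar$.

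The only genuinely new point — hence the main point requiring care — is the passage from the étale trivializing cover to a well-behaved monodromy representation in the absence of the Tannakian formalism available in Theorem~\ref{eldiv}. Concretely, I must verify via the Galois correspondence that the minimal trivializing cover of $E_s$ is again Galois with group the faithful image $\Gamma_s\subset\GL_r(K)$ of $\rho_s$, and that pulling $E$ back along $g'$ and then $g''$ shrinks each fibral monodromy group to a subgroup, so that abelianity and then prime-to-$p$-ness are genuinely inherited on the closed fibers. This is elementary descent and Galois-correspondence bookkeeping; once it is set up correctly, the remainder is an unchanged transcription of the proof of Theorem~\ref{eldiv}, and no new obstacle arises.
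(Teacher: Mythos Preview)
Your proposal is correct and follows essentially the same route as the paper: reduce to Theorem~\ref{elbu} by running the Brauer--Feit plus Lemma~\ref{dominate} argument of Theorem~\ref{eldiv} verbatim. The one cosmetic difference is how the fibral monodromy representation $\Gamma_s\subset\GL_r(K)$ is produced: the paper observes that an \'etale-trivializable bundle on the smooth projective $X_s$ is Nori semistable and hence comes from a representation of $\pi_1^N(X_s,\sigma(s))$ factoring through $\pi_1^{\text{\'et}}(X_s,\sigma(s))\twoheadrightarrow\Gamma_s$, whereas you extract it directly from the descent datum along the Galois cover $h_s$ (which works because $Y_s$ is projective connected over the algebraically closed field $K$, so $\Gamma(Y_s,\Oh_{Y_s})=K$ and the cocycle lands in $\GL_r(K)$). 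Both constructions yield the same finite subgroup of $\GL_r(K)$; your version is slightly more elementary, while the paper's invocation of Nori makes the parallel with the Tannakian picture of Theorem~\ref{eldiv} more visible.
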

\begin{proof}
We will reduce this theorem to Theorem \ref{elbu}. By taking an \'etale open of $S$ we can assume there exists a section $\sigma:S\to X$. Let $r$ be the rank of $E$ and fix $s$ a closed point in $S$. As $X_s$ is a smooth $k(s)$-variety, then (see \cite[Definition 3.2]{EL:pcur} and following discussion) every \'etale trivializable vector bundle is Nori semistable. In particular, the Galois cover $h_s:Y_s\to X_s$ corresponds to a representation of rank $r$ of the Nori fundamental group scheme $\pi^N_1(X_s,\sigma_s)$ (for the definition of the Nori group scheme see \cite{no}) that factors through the \'etale fundamental group:
\[\pi_1^N(X_s,\sigma(s))\onto \pi_1^\text{\'et}(X_s,\sigma(s))\onto \Gamma_s\subset \GL_r(K),\]
where $\Gamma_s$ is the Galois group of $h_s:Y_s\to X_s$.
The rest of the proof follows exactly as in Theorem \ref{eldiv}.
\end{proof}

If the morphism $X\to S$ is not projective but only smooth we get a similar result to Corollary~\ref{maincor}:

\begin{theorem}
Let $K$ be an algebraically closed field of positive characteristic with infinite transcendental degree over $\F_p$. Let $X\to S$ be a smooth morphism of varieties over $K$ and $E$ a vector bundle over $X$.
Assume that there exists a dense open $\tilde{S}\subset S$ such that \(E_s\) is isotrivial for every $s\in \tilde{S}(K)$, then so is $E_\ebar$.
\end{theorem}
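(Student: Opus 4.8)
The plan is to mimic the proofs of Theorem~\ref{main} and Corollary~\ref{maincor}, the only essential change being that a vector bundle, unlike a stratified bundle, is described by \emph{finitely many} data. After replacing $S$ by $\tilde{S}$ (which does not affect $E_\ebar$, the generic point being preserved) I may assume $E_s$ is isotrivial for every $s\in S(K)$. First I would descend the whole situation: since $X$ and $S$ are of finite type and $E$ is a coherent locally free sheaf, the triple $(X,S,E)$ is defined over the algebraic closure $K_0\subset K$ of the finitely generated subfield of $K$ spanned by the coefficients of a finite set of defining data, so that $\trdeg_{\F_p}K_0<\infty$. Write $(X_0,S_0,E_0)$ for this descent and $k(S_0)$ for the function field of $S_0$.

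The crucial point is that, because $K$ has infinite transcendence degree over $\F_p$ while $K_0$ has finite transcendence degree, we have $\trdeg_{K_0}K=\infty\geq \dim S_0=\trdeg_{K_0}k(S_0)$. Hence there is an embedding $i\colon k(S_0)\hookrightarrow K$ extending $K_0\subset K$: send a transcendence basis of $k(S_0)$ over $K_0$ to an algebraically independent family in $K$ and extend over the remaining algebraic part using that $K$ is algebraically closed. This is precisely the analogue of hypothesis $(*)$, and it is automatic here exactly because finitely many data force $K_0$ to have finite transcendence degree, whereas for stratified bundles one needed $K$ uncountable. Then, just as in the proof of Theorem~\ref{main}, base change along $\Spec K\xrightarrow{i}\Spec k(S_0)\to S_0$, combined with $K_0\subset K$, produces a $K$-point $s\in S(K)$ with $X_s\simeq X_0\otimes_{k(S_0)}K$ and $i^*E_0\simeq E_s$; that is, $E_s$ is the geometric generic fibre of the descended family transported to $K$ through $i$.

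Since $s\in S(K)=\tilde{S}(K)$, the bundle $E_s$ is isotrivial, so there is a finite \'etale cover $h\colon Y\to X_s$ with $h^*E_s$ trivial. To conclude I would exhibit $E_\ebar$ as a base change of $E_s$ along an algebraically closed field extension and simply base change $h$. Concretely, both $E_s$ and $E_\ebar$ are base changes of $E_0$ at the geometric generic point of $S_0$: writing $\mathcal{F}=\overline{k(S_0)}$ and letting $E_{0,\bar\eta_0}$ denote the restriction of $E_0$ to that geometric generic fibre, one has $E_s\simeq E_{0,\bar\eta_0}\otimes_{\mathcal F}K$ and $E_\ebar\simeq E_{0,\bar\eta_0}\otimes_{\mathcal F}\overline{k(S)}$. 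As $\trdeg_{\mathcal F}K$ and $\trdeg_{\mathcal F}\overline{k(S)}$ are equal (both equal to $\trdeg_{\F_p}K$ as cardinals, since $\dim S$ and $\trdeg_{\F_p}k(S_0)$ are finite), I can choose an embedding $j\colon K\hookrightarrow\overline{k(S)}$ compatible with the two copies of $\mathcal F$, so that $E_\ebar\simeq E_s\otimes_{K,j}\overline{k(S)}$ on $X_\ebar\simeq X_s\otimes_{K,j}\overline{k(S)}$. Then $h\times_{\Spec K,j}\Spec\overline{k(S)}$ is a finite \'etale cover of $X_\ebar$ trivializing $E_\ebar$, which is the claim. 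Alternatively one may invoke the behaviour of trivializing Galois covers under base change used in the proof of Lemma~\ref{basechange}, whose Claim applies verbatim to \'etale covers.

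The genuinely delicate part is this last field-theoretic bookkeeping: arranging the embeddings $i$ and $j$ to be mutually compatible over $K_0$ and $\mathcal F$ so that $E_\ebar$ is literally identified with a base change of $E_s$, and checking the transcendence-degree (cardinality) inequalities that make both embeddings exist. This is where the infinite-transcendence-degree hypothesis enters essentially, and it is the only place the argument differs in substance from the proper Tannakian case. Note that, in contrast to the stratified setting, no monodromy group, no minimality of the trivializing cover, and no hypothesis of geometrically connected fibres is needed, since isotriviality of a vector bundle transports directly along the base change of a single trivializing cover.
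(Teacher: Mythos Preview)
Your proposal is correct and follows essentially the same strategy as the paper's proof: descend the triple to a subfield of finite transcendence degree, use the infinite transcendence degree of $K$ to embed the function field of the descended base into $K$ (producing a closed point $s$ whose fibre is the geometric generic fibre of the descended family), and then embed $K$ into $\overline{k(S)}$ compatibly with this identification so that the trivialising cover of $E_s$ base-changes to one of $E_\ebar$. The only cosmetic differences are that the paper descends to a field $K'$ of finite type over $\F_p$ (rather than its algebraic closure) and constructs the second embedding $\iota\colon K\hookrightarrow\overline{k(S)}$ directly over $k(S')$, whereas you pass through the intermediate algebraically closed field $\mathcal F=\overline{k(S_0)}$; both bookkeeping choices lead to the same conclusion.
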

\begin{proof} There exists $K'$ a subfield of $K$ of finite type over $\F_p$ such that $X\to S$ and $E$ descend to $X'\to S'$ and $E'$. Moreover as $K$ has infinite transcendence degree over $\F_p$ there exists an immersion $k(S')\hookrightarrow K$ over $K'$ and a point $s\in S(K)$, like in the proof of Theorem~\ref{main}, such that the morphism $i:\Spec K\to \Spec k(S')$  given by $k(S')\subset K$ is a geometric generic point of $S'$, and on $X'\otimes_{k(S')}K\simeq X_s$ 
\[i^*E'=E_s.\]
Note that there exists an immersion $\iota:K\hookrightarrow \overline{k(S)}$ (which is not the natural one given by the fact that $S$ is a $K$-variety) that is the identity on $k(S')$, hence via $\iota$ we have that $X_\ebar\simeq X'\otimes_{k(S')}\overline{k(S)}\simeq X_s\otimes_K\overline{k(S)}$. In particular, if we continue to consider $K$ as a subfield of $\overline{k(S)}$ via the immersion $\iota$, then  $h_s\otimes_K\overline{k(S)}:Y_s\times_{\Spec K}\Spec \overline{k(S)}\to X_\ebar$ trivializes $E_\ebar$.
\end{proof}

\addcontentsline{toc}{section}{\refname}
\printbibliography
\end{document}